\theoremstyle{plain}
\newtheorem{definition}{Definition}
\newtheorem{lemma}{Lemma}
\newtheorem{remark}{Remark}
\newtheorem{theorem}{Theorem}
\numberwithin{equation}{section}
\begin{document}
\title[Speedups of compact group extensions]{Speedups of compact group
extensions}
\author{Andrey Babichev}
\address{Department of Mathematics, California State University, Fresno, CA
93740}
\email{ababichev@csufresno.edu}
\urladdr{}
\thanks{ }
\author{Adam Fieldsteel}
\address{Department of Mathematics and Computer Science, Wesleyan
University, Middletown, CT 06459}
\email{afieldsteel@wesleyan.edu}
\urladdr{}
\thanks{ }
\date{May 8, 2008}
\subjclass{}
\keywords{}

\begin{abstract}
Let $S_{1}$ and $S_{2}$ be ergodic extensions of finite measure preserving
transformations $T_{1}$ and $T_{2},$ where the extensions are by rotations
of a compact group $G.$ Then there is an $\mathbb{N}-$valued function $k,$
measurable with respect to the factor $T_{1},$ so that $S_{1}^{k}$ is
isomorphic to $S_{2}$ by an isomorphism that respects the action of $G$ on
fibers.
\end{abstract}

\maketitle

\section{Introduction}

Fix a compact group $G$ with Haar measure $\lambda $ and two-sided invariant
metric $\rho \leq 1.$ Let $T$ be a measure preserving transformation of the
Lebesgue probability space $\left( X,\mathcal{A},\mu \right) $ and $\sigma
:X\rightarrow G$ an $\mathcal{A}-$measurable map. The transformation $%
S:X\times G\rightarrow X\times G$ given by $S\left( x,g\right) =\left(
Tx,\sigma \left( x\right) g\right) $ is a measurable map preserving $\mu
\times \lambda .$ We refer to such an $S$ as a $G-$extension of $T,$ or more
briefly as a $G-$extension, if $T$ is either understood or need not be
specified. The factor $T$ will be referred to as the base factor of $S$ and
we will frequently identify the sets in $\mathcal{A}$ with their preimages
in $X\times G$ under the projection on the first coordinate$.$ We will use
the notation $\left( S,T,X,\sigma \right) $ to denote such a $G-$extension,
and we will use abbreviations such as $\left( T,\sigma \right) $ or $S$ when
the other components are understood. We will adopt the notational convention
that all $G-$extensions are represented by the letter $S,$ or a modified
letter $S,$ and the associated base factor and function into $G\,$will be
represented by the letters $T,X,$ and $\sigma ,$ respectively, with the same
modifiers. Thus a $G-$extention $S^{\prime }$ is understood to be associated
with the components $\left( T^{\prime },X^{\prime },\mathcal{A}^{\prime
},\mu ^{\prime },\sigma ^{\prime }\right) .$ We let $c$ denote the
projection $c:\left( x,g\right) \mapsto g.$

Each $G-$extension admits a natural free action of $G$ on $X\times G$,
which, for each $h\in G,$ is given by 
\begin{equation*}
h\left( x,g\right) =\left( x,gh\right)
\end{equation*}%
and this action commutes with the action of $\mathbb{Z}$ given by (the
powers of) $S.$ For each $\left( x,g\right) \in X\times G,$ we refer to the
set $G\left( x,g\right) =\left\{ \left( x,gh\right) \mid h\in G\right\} $ as
the $G-$orbit or the $G-$fiber of $\left( x,g\right) .$

Given two $G$ extensions $\left( S,T,X,\sigma \right) $ and $\left( \bar{S},%
\bar{T},\bar{X},\bar{\sigma}\right) $ we say $S$ is a $G-$factor of $\bar{S}$
if there is a factor map $\Phi $ from $\bar{S}$ to $S$ of the form%
\begin{equation*}
\Phi \left( \bar{x},g\right) =\left( \phi \left( \bar{x}\right) ,\bar{\alpha}%
\left( \bar{x}\right) g\right)
\end{equation*}%
where $\phi $ is a factor map from $\bar{T}$ to $T$ and $\bar{\alpha}:\bar{X}%
\rightarrow G$ is an $\mathcal{\bar{A}}-$ measurable function. If such a $%
\Phi $ exists for which $\phi $ is an isomorphism from $\bar{T}$ to $T,$ we
say $S$ is $G-$isomorphic to $\bar{S}.$ We note that these relations can be
described in terms of cocycles on equivalence relations. We will not make
use of this language, so we omit the definitions, but we simply state: given
a $G-$extension $\left( S,T,X,\sigma \right) ,$ the function $\sigma $
determines (and is determined by) a $G-$valued cocycle on the orbit relation
of $T.$ The condition that two $G-$extensions are $G-$isomorphic says that
the base transformations are isomorphic, and after this identification of
the orbit relations of the base transformations, their associated $G-$%
cocycles are cohomologous. The function $\bar{\alpha}$ is the
\textquotedblleft transfer function\textquotedblright\ that relates the two
cocycles.

By a speedup of a transformation $T:X\rightarrow X$ we mean a transformation 
$T^{\prime }:X\rightarrow X$ of the form $T^{\prime }\left( x\right)
=T^{k\left( x\right) }\left( x\right) ,$ for some measurable $k:X\rightarrow 
\mathbb{N}$. Given a $G-$extension $\left( S,T,X,\sigma \right) $ we
consider speedups of $S$ for which the variable exponent $k$ is measurable
with respect to the base factor, and we refer to such a transformation as a $%
G-$speedup of $S$. Each $G-$speedup of $S$ determines, and is determined by,
a speedup of the base factor $T.$ Thus a $G-$speedup of $S$ can be
understood to be a $G-$extension $S^{\prime }$ of the form 
\begin{equation*}
S^{\prime }\left( x,g\right) =\left( T^{\prime }x,\sigma ^{\prime }\left(
x\right) g\right) ,
\end{equation*}%
where 
\begin{equation*}
T^{\prime }\left( x\right) =T^{k\left( x\right) }\left( x\right)
\end{equation*}%
for an $\mathcal{A}-$measurable function and $k:X\rightarrow \mathbb{N}$,
and 
\begin{equation}
\sigma ^{\prime }\left( x\right) =\sigma ^{\left( k\right) }\left( x\right)
=\sigma \left( T^{k\left( x\right) -1}\left( x\right) \right) ...\sigma
\left( Tx\right) \sigma \left( x\right) .  \label{sigma cocycle}
\end{equation}

Our goal here is to prove that for all ergodic $G-$extensions $S$ and $\bar{S%
},$ $S$ can be obtained as a $G-$speedup of $\bar{S}$. That is, there is a $%
G-$speedup of $\bar{S}$ that is $G-$isomorphic to $S.$ We note that the
restriction of this theorem to the special case where the group $G$ is
trivial is a result obtained by Arnoux, Ornstein and Weiss \cite{AOW}, and
our work here gives a new proof of that result.

The theorem is an analogue of the orbit equivalence result for $G-$%
extensions obtained in \cite{F} and independently by other methods in \cite%
{G}. The proof will fall into two main parts. First we will show that, given
such $S$ and $\bar{S},$ there is an ergodic $G-$speedup of $\bar{S}$ that
has $S$ as a $G-$factor. We will then improve this result to obtain an
isomorphism. From a broader point of view, the overall argument is carried
out by an argument that is closely related to those of the theory of
restricted orbit equivalence developed by Rudolph and Kammeyer \cite{R},\cite%
{KR1},\cite{KR2}, and that is ultimately derived from Ornstein's proof of
the isomorphism theorem for Bernoulli shifts \cite{O}.

The general idea of the proof is a natural one, which\ may be obscured by
its implementation. Briefly, to obtain a speedup of a transformation $\bar{T}
$ that is isomorphic to a transformation $T$ we must advance along $\bar{T}-$%
orbits so that, with respect to a suitable partition $\bar{P},$ we visit the
elements of $\bar{P}$ in a manner that imitates the behavior of the orbits
of $T$ with respect to a generating partition $P.$ The ergodicity of $\bar{T}
$ will make this possible. To obtain a $G-$speedup of a $G-$extension $\bar{S%
}$ that is isomorphic to a given $S,$ we do the same, with the additional
requirement that we advance along $\bar{S}-$orbits by amounts that are
constant on $G-$fibers, in a manner that imitates the behavior of the orbits
of $S,$ with respect to both the first and second coordinates.

A particular technical issue that will concern us here, which was not
present in the earlier work on orbit equivalence \cite{F}$,$ is that of
establishing the ergodicity of our speedups. Ergodicity is preserved under
orbit equivalence, but the orbits of a speedup are suborbits of an ergodic
transformation, so special effort will be needed to ensure that the speedups
we construct are ergodic. To simplify matters a bit, the main argument will
be carried out first in the case of finite partitions and then extended to
allow countable partitions.

\section{Preliminaries}

\subsection{Partial transformations}

The speedups of the theorem will be obtained as limits of partially defined
transformations, which we now introduce.

\begin{definition}
A \emph{partial transformation} $T\ $on $X$ is an injective,
measure-preserving map $T:Dom\left( T\right) \rightarrow X$ defined on a
measurable subset $Dom\left( T\right) $ of $X.$ For such $T$ and for $n\in 
\mathbb{Z}$ we obtain a partial transformation $T^{n}$ in a natural way. For
each set $C\subset \mathbb{Z}$ and $x\in X$ we let $T^{C}x=\left\{
T^{n}x\mid n\in C\text{ and }x\in Dom\left( T^{n}\right) \right\} .$ In
particular, we refer to $T^{\mathbb{Z}}x$ as the $T-$orbit of $x.$ A \emph{%
partial }$G-$\emph{extension} (of a partial transformation $T$) is a map $%
S:Dom\left( T\right) \times G\rightarrow X\times G$ of the form%
\begin{equation*}
S\left( x,g\right) =\left( Tx,\sigma \left( x\right) g\right)
\end{equation*}%
where $\sigma :Dom\left( T\right) \rightarrow G$ is $\mathcal{A}-$measurable.
\end{definition}

\begin{definition}
A \emph{partial speedup} of a transformation $T_{0}:X\rightarrow X$ is a
partial transformation $T:Dom\left( T\right) \rightarrow X$ that satisfies%
\begin{equation*}
T\left( x\right) =T_{0}^{k\left( x\right) }\left( x\right)
\end{equation*}%
for all $x\in Dom\left( T\right) ,$ where $k:Dom\left( T\right) \rightarrow 
\mathbb{N}$ is a measurable function. If $\left( S_{0},T_{0},X,\sigma
\right) $ is a $G-$extension then a \emph{partial }$G-$\emph{speedup} of $%
S_{0}$ is a partial speedup $S$ of $S_{0}$ where the domain of $S\ $is a
measurable set of the form $Dom\left( S\right) =X^{\prime }\times G,$ and $S$
has the form 
\begin{equation*}
S\left( x,g\right) =S_{0}^{k\left( x\right) }\left( x,g\right)
\end{equation*}%
where $k:X^{\prime }\rightarrow \mathbb{N}$ is\ an $\mathcal{A}-$measurable
function. Equivalently, we can view $S$ as the partial $G-$extension of the
partial speedup $T$ of $T_{0}$ with domain $X^{\prime }$, where $T$ is given
by the same exponent $k,$ and $T$ is extended by the function $\sigma
^{\left( k\right) }$ as in $\left[ \ref{sigma cocycle}\right] .$\newline
\end{definition}

\begin{definition}
If $\left( S,T,\sigma ,X\right) $ is a $G-$extension and $\alpha
:X\rightarrow G$ is measurable then we let $\left( S^{\alpha },T,\sigma
^{\alpha },X\right) $ denote the $G-$extension given by setting 
\begin{equation*}
\sigma ^{\alpha }\left( x\right) =\alpha \left( Tx\right) \sigma \left(
x\right) \alpha ^{-1}\left( x\right)
\end{equation*}%
We note that $\left( S^{\alpha },T,\sigma ^{\alpha },X\right) $ is $G-$%
isomorphic to $\left( S,T,\sigma ,X\right) $ via the isomorphism 
\begin{equation*}
\left( x,g\right) \rightarrow \left( x,\alpha \left( x\right) g\right) .
\end{equation*}%
A similar definition is made, using the same notation, in the case that $%
\left( S,T,\sigma ,X\right) $ is a partial $G-$extension, with $\alpha
:Dom\left( T\right) \rightarrow G.$
\end{definition}

\subsection{Distributions and Sampling}

Let $\mathcal{M}\left( M\right) $ denote the space of Borel probability
measures on the metric space $\left( M,\rho \right) ,$ where $\left( M,\rho
\right) $ is taken to be separable, and $\rho $ is bounded by $1.$ These
conditions on $M$ will be understood to be in effect throughout this paper.
We make use of the Kantorovich metric on $\mathcal{M}\left( M\right) $
(which yields the weak topology on $\mathcal{M}\left( M\right) $) defined by
setting, for all $\lambda _{1},\lambda _{2}\in \mathcal{M}\left( M\right) ,$%
\begin{equation*}
\left\Vert \lambda _{1},\lambda _{2}\right\Vert _{\mathcal{M}}=\inf \left\{
\int_{M\times M}\rho \left( x,y\right) d\nu \left( x,y\right) \right\}
\end{equation*}%
where the infimum is taken over all probability measures $\nu $ on $M\times
M $ having marginals $\lambda _{1}$ and $\lambda _{2}.$

Given a (Borel) measurable function $f$ from a Lebesgue space $\left( X,\mu
\right) $ to $\left( M,\rho \right) ,$ by the \emph{distribution} of $f$,
denoted $dist_{X}\left( f\right) $ we mean the image of $\mu $ under $f.$ If 
$Y$ is a subset of $X$ with $\mu \left( Y\right) >0,$ we obtain $%
dist_{Y}\left( f\right) $ by restricting $f$ to the normalized measure space 
$\left( Y,\frac{\mu }{\mu \left( Y\right) }\right) .$ If $A$ is a finite
subset of $X,$ we obtain $dist_{A}\left( f\right) $ by restricting $f$ to
the space $\left( A,\nu \right) ,$ where $\nu $ is normalized counting
measure on $A.$ When $M$ is a finite or countable set and no other metric is
specified, it will be understood that $\rho $ is the discrete metric on $M$.

For $K\in \mathbb{N}$ we let $\left[ K\right] $ denote the set $\left\{
0,1,...,K-1\right\} .$ Given $n$ and $K\in \mathbb{N}$ and a sequence $s:%
\left[ K\right] \rightarrow M,$ we obtain a function $s_{n}:\left[ K-n+1%
\right] \rightarrow \left( M^{n}\right) ^{K-n+1}$ by setting, for each $i\in %
\left[ K-n+1\right] ,$ 
\begin{equation*}
s_{n}\left( i\right) =\left( s\left( i\right) ,...,s\left( i+n-1\right)
\right) .
\end{equation*}%
We refer to $dist_{\left[ K-n+1\right] }\left( s_{n}\right) $ as the $n-$%
distribution of $s.$ The restrictions of $s$ to intervals of length $n$ are
called $n-$blocks in $s.$ More generally, if we specify a set of $n-$blocks
in $s,$ where $I\subset \left[ K-n+1\right] $ is the set of initial
positions of these blocks, then we refer to $dist_{I}\left( s_{n}\right) $
as the $n-$distribution of this set of $n-$blocks. (We will make use of this
especially in the case where the specified $n-$blocks (that is, their
domains) are pairwise disjoint). The sequences to which we apply this
language will often be the values of a function $f$ along an orbit of a
transformation $T$. In that case the sequence $\left\{ f\left( T^{i}x\right)
\right\} _{i=0}^{n}$ will be referred to as the $T-f-n-$name\ of $x$. We
will also use this language in connection with orbits themselves. In
particular, if $S$ is a speedup of $S_{0},$ we may need to speak about
blocks in $S-$orbits as well as blocks in $S_{0}-$orbits, so to distinguish
them, we will refer to $S-$blocks and $S_{0}-$blocks.

The Birkhoff ergodic theorem can be formulated as:\bigskip

\textsc{Ergodic Theorem: }\emph{Let }$T$\emph{\ be an ergodic measure
preserving transformation of }$\left( X,\mu \right) $\emph{, and }$%
f:X\rightarrow \left( M,\rho \right) $\emph{\ a measurable function. Then
for almost every }$x\in X,$%
\begin{equation*}
\lim_{n\rightarrow \infty }\left\Vert dist_{T^{\left[ n\right] }\left(
x\right) }\left( f\right) ,dist_{X}\left( f\right) \right\Vert _{\mathcal{M}%
}=0.
\end{equation*}%
\bigskip

When 
\begin{equation*}
\left\Vert dist_{T^{\left[ n\right] }\left( x\right) }\left( f\right)
,dist_{X}\left( f\right) \right\Vert _{\mathcal{M}}<\zeta
\end{equation*}%
we say that the $T-f-n-$name of $x$ has $\zeta -$good distribution. If the
function $f$ has the form 
\begin{equation*}
f=\tbigvee\limits_{i\in \left[ k\right] }T^{-i}g
\end{equation*}%
then in the above situation we would say that the $T-g-n-$name of $x$ has $%
\zeta -$good $k-$distribution.

The following two lemmas provide key combinatorial devices that will be used
in our argument.

\begin{lemma}
\label{model name 1}Let $\left( T,X,\mu \right) $ be an ergodic
transformation and $f:X\rightarrow \left( M,\rho \right) $ a measurable
function$.$ For all $n\in \mathbb{N}$ and $\zeta >0$ there exists $L\left(
n,\zeta \right) \in \mathbb{N}$ so that for all $L\geq L\left( n,\zeta
\right) ,$ $\left( 1-\zeta \right) -$most points have $L-$names that can be $%
\left( 1-\zeta \right) -$covered by a set of disjoint $n-$blocks which has $%
\zeta -$good $n-$distribution. In addition, these $n-$blocks are organized
into groups of consecutive $n-$blocks where the concatenation of these
groups has $\zeta -$good $n-$distribution. Moreover, the lengths of these
groups can be take to exceed any lower bound given in advance.

\begin{proof}
Given $n$ and $\zeta ,$ fix $\xi >0$ and choose $K>\frac{n}{\xi }$ so that
for a set $X_{1}\subset X$ with $\mu \left( X_{1}\right) >\left( 1-\xi
\right) ,$ and for all $x\in X_{1},$ 
\begin{equation*}
\left\Vert dist_{T^{\left[ K\right] }x}\left( \bigvee_{i\in \left[ n\right]
}T^{-j}f\right) ,dist_{X}\left( \bigvee_{i\in \left[ n\right]
}T^{-j}f\right) \right\Vert _{\mathcal{M}}<\xi
\end{equation*}%
Choose finitely many disjoint sets $\left\{ A_{i}\right\} $, with $%
X_{2}:=\tbigcup\limits_{i}A_{i}\subset X_{1},$ and with $\mu \left(
X_{2}\right) >\left( 1-\xi \right) ,$ and so that for each $A_{i}$ and all $%
x,y\in A_{i},$ 
\begin{equation*}
\max_{0\leq j\leq K-1}\left\{ \rho \left( T^{j}x,T^{j}y\right) \right\} <\xi
.
\end{equation*}%
(The elements of $X_{2}$ are \textquotedblleft good $K-$points%
\textquotedblright\ whose $K-$orbits are \textquotedblleft good $K-$%
blocks\textquotedblright .) Choose $L$ so that most points have an $L-$orbit
which is mostly covered by good $K-$blocks, and which can therefore be $%
\left( 1-\xi \right) -$covered by disjoint good $K-$blocks. Moreover, we may
arrange that if these disjoint good $K-$blocks are partitioned into
\textquotedblleft types\textquotedblright\ according to the $A_{i}$ that
contains their initial element, then the each type repeats at least $\left( 
\frac{1}{\xi }\right) -$ many times in the given $L$ orbit.\newline
For each such repeated type of good $K-$block, cyclically divide the
occurrences of that type of block into consecutive $n-$blocks. That is,
divide the $j^{th}$ occurrence of the type into disjoint $n-$blocks starting
at position $\left[ j\right] _{n},$ where $\left[ i\right] _{n}\in \left\{
0,1,2,...,n\right\} $ and $\left[ j\right] _{n}\equiv j$ $\left( \text{mod }%
n\right) .$ If $\xi $ was chosen sufficiently small, the resulting
collection of $n-$ blocks covers $\left( 1-\zeta \right) $ of the $L$ orbit
by disjoint $n-$blocks with good $n-$distribution, and these $n-$blocks are
organized into consecutive groups nearly $K$ in length. Since the $K$ blocks
were good, if these groups of consecutive $n-$blocks are concatenated, the
resulting long block has $\zeta -$good $n-$dist.
\end{proof}
\end{lemma}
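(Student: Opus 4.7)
The plan is to combine the Birkhoff ergodic theorem, applied at two different scales, with a Rokhlin-type discretization of orbits. I would first fix an auxiliary parameter $\xi>0$, to be chosen small in terms of $\zeta$, $n$, and any prescribed lower bound on group length. Applying the ergodic theorem to the $M^n$-valued function $x \mapsto (f(x), f(Tx), \ldots, f(T^{n-1}x))$, I would choose $K$ much larger than $n/\xi$ so that a set $X_1 \subset X$ of measure $>1-\xi$ consists of points whose $K$-orbit already exhibits $\xi$-good $n$-distribution (measured against the distribution on $X$).

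Next I would discretize: by separability of $M$ and measurability of $f$, I can partition a subset $X_2 \subset X_1$ of measure $>1-\xi$ into finitely many pieces $A_1, \ldots, A_N$ on each of which the map $x \mapsto (f(x), f(Tx), \ldots, f(T^{K-1}x))$ varies by at most $\xi$ in the sup-metric. The $K$-orbits starting in $X_2$ are then \emph{good $K$-blocks}, and each comes with a well-defined \emph{type}, namely the index $i$ with $x \in A_i$. Because the types partition $X_2$ into positively-measured sets, the ergodic theorem (applied a second time, now to the indicator of $X_2$ together with the partition into types) produces an $L(n,\zeta)$ such that for every $L \geq L(n,\zeta)$ and almost every $x$, the $L$-orbit of $x$ is $(1-\xi)$-covered by a disjoint collection of good $K$-blocks in which each type $A_i$ appears at least $1/\xi$ times. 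Since $K$ is much larger than $n$, the portion of the orbit wasted on gaps and boundary trimming is negligible compared to $\zeta$.

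The main obstacle, and the step requiring the most care, is the final subdivision of the chosen $K$-blocks into disjoint $n$-blocks whose union has good $n$-distribution. A naive cut starting at the beginning of every $K$-block would introduce a systematic phase bias if $n \nmid K$. The device is to cycle through offsets: for each type $A_i$, enumerate its occurrences $j=1,2,\ldots$ and cut the $j$th occurrence into consecutive $n$-blocks beginning at position $j \bmod n$ inside that $K$-block. Since each good $K$-block already has $\xi$-good $n$-distribution, and since for each fixed type the $n$ possible phases are represented with nearly equal frequency (using the fact that each type repeats at least $1/\xi$ times), the resulting disjoint family of $n$-blocks has $\zeta$-good $n$-distribution and covers a $(1-\zeta)$-fraction of the $L$-orbit once $\xi$ is small enough.

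Finally, the \emph{groups} required by the statement are, by construction, the maximal runs of consecutive $n$-blocks coming from a single $K$-block. Concatenating the $n$-blocks in one group reproduces (up to trimming a prefix of length less than $n$) a good $K$-block, which has $\xi$-good $n$-distribution; taking $\xi$ sufficiently small, the concatenation over all groups therefore has $\zeta$-good $n$-distribution. Any prescribed lower bound on the lengths of these groups is met by simply enlarging $K$ from the outset, since each group has length approximately $K$.
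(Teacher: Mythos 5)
Your proposal is correct and follows essentially the same route as the paper's proof: the two-scale application of the ergodic theorem (good $n$-distribution on $K$-orbits, then covering $L$-orbits by good $K$-blocks), the discretization into finitely many types $A_i$ on which the $K$-name is nearly constant, the cyclic phase-shifting of the cut points across the repeated occurrences of each type, and the identification of the groups with the $K$-blocks themselves. The only difference is expository; you spell out more explicitly why the cyclic offsets eliminate the phase bias, which is exactly the role they play in the paper's argument.
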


We note that lemma $\ref{model name 1}$ can immediately be strengthened so
that each of the points, whose existence is asserted by the lemma, has an $%
L- $name with $\zeta -$good $n-$distribution.

\begin{lemma}
\label{model name 2}(constructing a model name) Let $\left( T,X\right) $ be
an ergodic transformation and $f:X\rightarrow \left( M,\rho \right) $ a
measurable function$.$ For all $n\in \mathbb{N}$ and $\zeta >0$, and for all
sufficiently large $n_{1},$ and for arbitrarily large $L^{\prime },$ there
is a sequence $F\in M^{L^{\prime }}$ such that:

\begin{enumerate}
\item The $n_{1}-$distribution of $F$ is within $\zeta $ of the distribution
of $\bigvee_{i\in \left[ n_{1}\right] }T^{-i}f.$ That is,%
\begin{equation*}
\left\Vert dist_{\left[ L^{\prime }-n_{1}+1\right] }\left( F_{n_{1}}\right)
,dist_{X}\left( \bigvee_{i\in \left[ n_{1}\right] }T^{-i}f\right)
\right\Vert _{\mathcal{M}}<\zeta
\end{equation*}

\item $F$ is a union of consecutive $n_{1}-$blocks, and this set of $n_{1}-$%
blocks has $n_{1}-$distribution within $\zeta $ of the distribution of $%
\tbigvee\limits_{i=0}^{n_{1}-1}T^{-i}f.$ That is, if $I=\left\{ i\in \left[
L^{\prime }\right] \mid i\equiv 0\left( \text{mod }n_{1}\right) \right\} ,$
then%
\begin{equation*}
\left\Vert dist_{I}\left( F_{n_{1}}\right) ,dist_{X}\left( \bigvee_{i\in 
\left[ n_{1}\right] }T^{-i}f\right) \right\Vert _{\mathcal{M}}<\zeta
\end{equation*}

\item Each of the disjoint $n_{1}$ blocks above is at least $\left( 1-\zeta
\right) -$covered by a set of disjoint $n-$blocks which has $n-$distribution
within $\zeta $ of the distribution of $\bigvee_{i\in \left[ n\right]
}T^{-i}f.$
\end{enumerate}

\begin{proof}
Given $\left( n,\zeta \right) ,$ choose $\zeta _{1}>0$ and let $n_{1}\geq
L\left( n,\zeta _{1}\right) $ (as defined in lemma $\ref{model name 1}$).
Choose $L^{\prime }>L\left( n_{1},\zeta _{1}\right) $ so that, in addition,
most points have $L^{\prime }$ names with $\zeta _{1}-$good $n_{1}-$%
distribution. Fix such a point $x\in X.$ Cover (a $\left( 1-\zeta
_{1}\right) -$fraction of) its $L^{\prime }-$orbit by a set of disjoint $%
n_{1}-$blocks which has $\zeta _{1}-$good $n_{1}-$distribution, and which
blocks are organized into groups of consecutive blocks, where each of which
group (as a single sequence) has $\zeta _{1}-$good $n_{1}-$distribution. So
most of these (disjoint) $n_{1}-$blocks can be $\left( 1-\zeta _{1}\right) -$
covered, disjointly, by a set of $n-$blocks with $\zeta -$good $n-$%
distribution. Throw out the $\zeta _{1}-$fraction of $n_{1}-$blocks that
can't be so covered, and throw out the $\zeta _{1}-$fraction of the orbit
between the groups of consecutive $n_{1}-$blocks, and then push these
remaining $n_{1}-$blocks together. If $\zeta _{1}$ was chosen sufficiently
small, this (modified) orbit has the name we want.
\end{proof}
\end{lemma}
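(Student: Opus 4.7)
The plan is to apply Lemma \ref{model name 1} twice at two nested scales, first to fix the inner scale $n$ inside $n_{1}$-blocks, and then to arrange a good $L'$-orbit that can be trimmed down to the desired sequence $F$. Throughout, I would work with a single small auxiliary tolerance $\zeta_{1}$, eventually taken much smaller than $\zeta$, chosen to absorb the cumulative errors from trimming and concatenation.

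First I would apply Lemma \ref{model name 1} at scale $n$ with tolerance $\zeta_{1}$, obtaining the threshold $L(n,\zeta_{1})$, and set $n_{1}\geq L(n,\zeta_{1})$. By the strengthened form of Lemma \ref{model name 1} noted in the remark following it, a set of measure at least $1-\zeta_{1}$ consists of points whose $n_{1}$-names are $(1-\zeta_{1})$-covered by disjoint $n$-blocks with $\zeta_{1}$-good $n$-distribution; I call these \emph{good $n_{1}$-points}, and note that property (3) will be automatic on any $n_{1}$-block whose initial point is good. Next I apply Lemma \ref{model name 1} again, this time at scale $n_{1}$ with the auxiliary function $\bigvee_{i\in [n_{1}]}T^{-i}f$ and tolerance $\zeta_{1}$, and pick any $L'>L(n_{1},\zeta_{1})$. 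A $(1-\zeta_{1})$-measure set of points $x_{0}$ then have $L'$-names that are $\zeta_{1}$-good in $n_{1}$-distribution and can be $(1-\zeta_{1})$-covered by disjoint consecutive $n_{1}$-blocks organized into long consecutive groups whose concatenation still has $\zeta_{1}$-good $n_{1}$-distribution. By ergodicity applied to the indicator of good $n_{1}$-points, I may also arrange that the fraction of those $n_{1}$-blocks whose initial point is not good is at most $O(\zeta_{1})$.

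I would then form $F$ by (a) discarding every $n_{1}$-block in the cover whose initial point is not good, (b) discarding the portions of the $L'$-orbit lying outside the consecutive-block groups, and (c) concatenating the surviving $n_{1}$-blocks in the order they occur. Property (2) is immediate: $F$ is a concatenation of $n_{1}$-blocks, the sampling set $I$ picks out exactly those blocks, and their distribution agrees up to an $O(\zeta_{1})$ error with the original $n_{1}$-block distribution along the $L'$-orbit of $x_{0}$, which by choice is within $\zeta_{1}$ of $dist_{X}\left(\bigvee_{i\in [n_{1}]}T^{-i}f\right)$. Property (3) holds because each surviving $n_{1}$-block starts at a good $n_{1}$-point.

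Property (1), the $n_{1}$-distribution of $F$ read as a single sequence at \emph{every} starting position, is the main obstacle and is exactly what forced the "group of consecutive $n_{1}$-blocks" structure into Lemma \ref{model name 1}. Inside each surviving consecutive group, the $n_{1}$-distribution of that group matches the target up to $\zeta_{1}$. The uncontrolled contributions come from $n_{1}$-blocks that straddle a splice where two groups were glued after discarding the gap between them; the total length of these straddling regions is at most $n_{1}$ per splice, hence a fraction of $L'$ bounded by $O(n_{1}/(\text{group length}))$, which can be made less than $\zeta_{1}$ by insisting in the first application of Lemma \ref{model name 1} that the groups far exceed $n_{1}$ in length (the lemma explicitly permits this). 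Discarding the $O(\zeta_{1})$ fraction of bad $n_{1}$-blocks inside the groups perturbs the distribution by a further $O(\zeta_{1})$. Choosing $\zeta_{1}$ sufficiently small relative to $\zeta$ absorbs all these errors into the final tolerance, completing the construction.
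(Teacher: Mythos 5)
Your proposal is correct and follows essentially the same route as the paper: two nested applications of Lemma \ref{model name 1} (at scales $n$ and $n_{1}$), discarding the few $n_{1}$-blocks not well covered by good $n$-blocks and the gaps between the consecutive groups, and concatenating what remains. You are in fact somewhat more explicit than the paper about why property (1) survives the splices --- namely that the groups can be taken long compared to $n_{1}$ --- which the paper compresses into ``if $\zeta_{1}$ was chosen sufficiently small.''
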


\begin{lemma}
\label{convex}(convexity lemma) Let $\left( V,\left\Vert \text{ }\right\Vert
\right) $ be a normed real vector space, and suppose that $v_{1},v_{2}$ and $%
v_{Q}\in V$ and $0<\zeta \leq \varepsilon .$ If $v_{Q}=\left( 1-\varepsilon
\right) v_{1}+\varepsilon v_{2}$ and $\left\Vert v_{1}-v_{Q}\right\Vert
<\zeta ,$ then $\left\Vert v_{2}-v_{Q}\right\Vert <\frac{\zeta }{\varepsilon 
}.$

\begin{proof}
We have $v_{Q}=v_{1}+\varepsilon \left( v_{2}-v_{1}\right) ,$ so $%
v_{Q}-v_{1}=\varepsilon \left( v_{2}-v_{1}\right) ,$ so $\left\Vert
v_{Q}-v_{1}\right\Vert =\varepsilon \left\Vert v_{2}-v_{1}\right\Vert .$
Similarly $\left\Vert v_{2}-v_{Q}\right\Vert =\left( 1-\varepsilon \right)
\left\Vert v_{2}-v_{1}\right\Vert .$ So%
\begin{equation*}
\left\Vert v_{Q}-v_{2}\right\Vert =\frac{\left( 1-\varepsilon \right) }{%
\varepsilon }\left\Vert v_{Q}-v_{1}\right\Vert <\frac{\left( 1-\varepsilon
\right) }{\varepsilon }\zeta <\frac{\zeta }{\varepsilon }.
\end{equation*}
\end{proof}
\end{lemma}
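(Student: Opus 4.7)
The plan is to exploit the collinearity of the three points $v_1, v_2, v_Q$: since $v_Q$ lies on the line through $v_1$ and $v_2$, both $v_Q - v_1$ and $v_Q - v_2$ are scalar multiples of the single vector $v_2 - v_1$, so any norm bound on one of these differences immediately transfers to a norm bound on the other. This reduces the lemma to a one-line use of homogeneity of the norm; I do not anticipate any real obstacle.

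Concretely, I would first rewrite $v_Q = (1-\varepsilon) v_1 + \varepsilon v_2$ in the form $v_Q - v_1 = \varepsilon (v_2 - v_1)$, take norms, and invoke the hypothesis to obtain
\begin{equation*}
\varepsilon \, \|v_2 - v_1\| \;=\; \|v_Q - v_1\| \;<\; \zeta,
\end{equation*}
so that $\|v_2 - v_1\| < \zeta/\varepsilon$. Rearranging the same identity in the opposite direction gives $v_Q - v_2 = -(1 - \varepsilon)(v_2 - v_1)$, whence
\begin{equation*}
\|v_Q - v_2\| \;=\; (1-\varepsilon)\,\|v_2 - v_1\| \;<\; (1-\varepsilon) \cdot \frac{\zeta}{\varepsilon} \;<\; \frac{\zeta}{\varepsilon},
\end{equation*}
where the final inequality uses only $\varepsilon > 0$ (and $\varepsilon \leq 1$, implicit in reading the combination as convex). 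I note that the hypothesis $\zeta \leq \varepsilon$ is never actually invoked in this derivation; its role is not in the proof but in the intended applications, where it ensures $\zeta/\varepsilon \leq 1$ so that the conclusion remains informative against the Kantorovich metric on $\mathcal{M}(M)$, which is itself bounded by $1$.
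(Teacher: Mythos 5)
Your proof is correct and is essentially the same argument as the paper's: both rewrite the convex combination as $v_Q-v_1=\varepsilon(v_2-v_1)$ and $v_Q-v_2=-(1-\varepsilon)(v_2-v_1)$, use homogeneity of the norm, and conclude via $(1-\varepsilon)/\varepsilon\cdot\zeta<\zeta/\varepsilon$. Your side remark that the hypothesis $\zeta\leq\varepsilon$ is not needed for the derivation is also accurate.
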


This lemma will be applied to probabity vectors $v_{1},v_{2}$ and $v_{Q}$
viewed as elements of $\mathbb{R}^{t}$ with respect to the $l^{1}-$norm,
where $v_{Q}$ will be the distribution of a $t-$set partition $Q$ on a
probability space, and $v_{1}$ and $v_{2}$ will be the conditional
distributions of $Q$ on subsets of measure $1-\varepsilon $ and $\varepsilon
,$ respectively.

\begin{lemma}
\label{sampling}(Sampling lemma) For all $n\in \mathbb{N}$, $\delta \in
\left( 0,2^{-n}\right) $ and $\zeta >0$ there exists $K=K\left( n,\delta
,\zeta \right) \in \mathbb{N}$ so that given any set $E$ with $\left\vert
E\right\vert \leq 2^{n}$ and any probability measure $\nu $ on $E$ such that
for all $e\in E,$ $\nu \left( e\right) >\delta ,$ and any set $D$ with $%
\left\vert D\right\vert \geq K,$ there exists a function $f$ from $D$ \emph{%
onto} $E$ so that $\left\Vert dist_{D}\left( f\right) -\nu \right\Vert _{%
\mathcal{M}}<\zeta .$

\begin{proof}
Fix $n\in \mathbb{N}$, $\delta >0$ and $\zeta >0.$ Choose $K\in \mathbb{N}$
with $\frac{1}{K}<\min \left\{ \delta ,\frac{\zeta }{2^{n}}\right\} .$
Suppose we are given a set $E$ and measure $\nu $ as above, and a set $D$
with $\left\vert D\right\vert =K^{\prime }\geq K.$ Partition $\left[ 0,1%
\right] $ into subintervals whose lengths equal the measures of the atoms of 
$\nu .$ That is, partition $\left[ 0,1\right] $ by $\left\{
0=x_{0}<x_{1}<...<x_{t}=1\right\} $ so that $\left\vert
x_{i}-x_{i-1}\right\vert =\nu \left( e_{i}\right) ,$ where $e_{i}$ is the $%
i^{th}$ element of $E.$ Modify this partition by moving each endpoint $x_{i}$
to the nearest multiple of $\frac{1}{K^{\prime }}$ below it. This new
partition determines a distribution $\nu _{1}$ that is $\zeta -$close to $%
\nu .$ But there is a function $f:D\rightarrow E$ whose statistical
distribution is exactly $\nu _{1}$.
\end{proof}
\end{lemma}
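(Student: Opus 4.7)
The plan is to realize $f$ as the indicator of a partition of $D$ into $|E|$ blocks whose sizes approximate $\nu$ as closely as the granularity $1/|D|$ permits. The hypothesis $\nu(e) > \delta$ is what will guarantee that, once $K$ is chosen large enough, no block is empty and hence $f$ is onto.

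Concretely, I would choose $K \in \mathbb{N}$ large enough that $1/K < \delta$ and $2^{n}/K < \zeta$. Given $D$ with $|D| = K' \geq K$ and an admissible $\nu$ on $E = \{e_1, \ldots, e_t\}$ with $t \leq 2^{n}$, set $n_i = \lfloor \nu(e_i)\,K' \rfloor$. Because $\nu(e_i)\,K' > \delta K \geq 1$, each $n_i \geq 1$. The residual $r = K' - \sum_i n_i$ satisfies $0 \leq r \leq t$, and I distribute these $r$ leftover points to $r$ distinct atoms (say those with the largest fractional parts $\nu(e_i)K' - n_i$); the resulting counts each differ from $\nu(e_i)\,K'$ by strictly less than $1$. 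Any $f \colon D \to E$ realizing these counts is then surjective by construction.

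To bound $\|\mathrm{dist}_D(f) - \nu\|_{\mathcal{M}}$, I use that on a finite set with the discrete metric the Kantorovich distance coincides with total variation. Since each count differs from $\nu(e_i)\,K'$ by less than $1$, the $\ell^{1}$ distance between $\mathrm{dist}_D(f)$ and $\nu$ is at most $t/K' \leq 2^{n}/K' < \zeta$, which already gives a Kantorovich distance below $\zeta$. The only real obstacle is the surjectivity constraint: pure rounding of marginals would be routine, but if $\nu$ had atoms of mass below $1/K'$ one could not cover them without introducing a distortion of order $1/K'$ per starved atom. The hypothesis $\delta > 0$ with $\delta < 2^{-n}$ eliminates this possibility and is essentially sharp for the conclusion.
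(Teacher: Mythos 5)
Your proof is correct and follows essentially the same route as the paper: choose $K$ with $1/K<\delta$ and $2^{n}/K<\zeta$, round $\nu$ to a distribution with denominator $K'$ (the paper rounds the cumulative endpoints down, you round the individual masses down and redistribute the remainder — an immaterial difference), and use $\nu(e)>\delta>1/K'$ to keep every atom nonempty so that $f$ is onto. The error estimate $t/K'\leq 2^{n}/K<\zeta$ is the same in both arguments.
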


\begin{lemma}
\label{exhaustion}(Exhaustion lemma) Suppose that $\delta ^{\prime }>0,$and $%
\varepsilon >0$ are given, and suppose $\zeta <\frac{\varepsilon \delta
^{\prime }}{2}.$ Then for all $K^{\prime }$ there exists $N^{\prime }\in 
\mathbb{N}$ such that if $\left( Z,\lambda \right) $ is a discrete
probability space with normalized counting measure $\lambda $ such that $%
\left\vert Z\right\vert >N^{\prime }$ and $Q$ is a finite partition of $Z$,
each of whose atoms has $\lambda -$measure at least $\delta ^{\prime }$, and
if $\left\{ S_{i}\subset Z\right\} _{i=1}^{r}$ is a pairwise disjoint
(non-empty) sequence of subsets of $Z$ such that for all $i$ and $j,$ $%
\left\vert S_{i}\right\vert =K^{\prime },dist_{S_{i}}Q=dist_{Sj}Q,$ $%
\left\Vert dist_{S_{i}}Q-dist_{Z}Q\right\Vert _{\mathcal{M}}<\zeta $ and $%
\lambda \left( \bigcup_{i=1}^{r}S_{i}\right) <1-\varepsilon ,$ then there is
an additional set $S_{r+1}\subset Z\backslash \bigcup_{i=1}^{r}S_{i},$ with $%
\left\vert S_{i}\right\vert =K^{\prime },$ on which $%
dist_{S_{r+1}}Q=dist_{S_{1}}Q.$

\begin{proof}
Choose $N^{\prime }>\frac{K^{\prime }}{\left( \frac{\varepsilon \delta
^{\prime }}{2}\right) }.$ Suppose that $\left( Z,\lambda \right) $ and $Q$
and $\left\{ S_{i}\right\} _{i=1}^{r}$ are as in the statement of the lemma.
Writing $S=\bigcup_{i=1}^{r}S_{i},$ we would have $\lambda \left(
Z\backslash S\right) =\varepsilon ^{\prime }>\varepsilon ,$ and by lemma $%
\ref{convex},$ 
\begin{equation*}
\left\Vert dist_{Z\backslash S}Q,dist_{Z}Q\right\Vert _{\mathcal{M}}<\frac{%
\zeta }{\varepsilon ^{\prime }}<\frac{\zeta }{\varepsilon }.
\end{equation*}%
Since $\frac{\zeta }{\varepsilon }<\frac{\delta ^{\prime }}{2}$, each atom
of the trace of $Q$ on $Z\backslash S$ has conditional measure at least $%
\frac{\delta ^{\prime }}{2}$ and (unconditional) measure at least $\left( 
\frac{\varepsilon ^{\prime }\delta ^{\prime }}{2}\right) .$ Since $\left( 
\frac{\varepsilon ^{\prime }\delta ^{\prime }}{2}\right) N^{\prime }>\left( 
\frac{\varepsilon \delta ^{\prime }}{2}\right) N^{\prime }>K^{\prime },$
each atom of the trace of $Q$ on $Z\backslash S$ has at least $K^{\prime }$
elements. Therefore there is an injection $g:S_{1}\rightarrow Z\backslash S$
so that for all $s\in S_{1},$ $Q\left( g\left( s_{1}\right) \right) =Q\left(
s_{1}\right) ,$ and setting $g\left( S_{1}\right) =S_{r+1}$ completes the
proof.
\end{proof}
\end{lemma}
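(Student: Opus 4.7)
The plan is to reduce the existence of $S_{r+1}$ to a counting estimate on the $Q$-atoms of the complement $Z \setminus \bigcup_{i=1}^{r} S_{i}$. Write $S = \bigcup_{i=1}^{r} S_{i}$ and set $\varepsilon' = \lambda(Z \setminus S) > \varepsilon$. Since all the $S_{i}$ have the same distribution of $Q$, call it $\nu$, I have the convex decomposition $\text{dist}_{Z} Q = (1-\varepsilon') \nu + \varepsilon'\, \text{dist}_{Z \setminus S} Q$. The assumption $\|\nu - \text{dist}_{Z}Q\|_{\mathcal{M}} < \zeta$ is exactly the hypothesis of the convexity lemma (Lemma \ref{convex}) applied in $\ell^{1}$ on the probability simplex, which will give
\begin{equation*}
\|\text{dist}_{Z \setminus S} Q - \text{dist}_{Z} Q\|_{\mathcal{M}} < \frac{\zeta}{\varepsilon'} \leq \frac{\zeta}{\varepsilon} < \frac{\delta'}{2}.
\end{equation*}

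Next I transfer this distributional estimate into a lower bound on the cardinality of each $Q$-atom in $Z \setminus S$. Since each atom of $Q$ in $Z$ has measure at least $\delta'$, the above bound forces every atom of the trace $Q|_{Z \setminus S}$ to have conditional measure at least $\delta' - \delta'/2 = \delta'/2$, and hence unconditional $\lambda$-measure at least $\varepsilon' \delta'/2 \geq \varepsilon \delta'/2$. Choosing $N' > 2K'/(\varepsilon \delta')$ then guarantees that each such atom contains at least $K'$ points of $Z$.

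Finally, I construct $S_{r+1}$ atom by atom. For each atom $q$ of $Q$, the set $S_{1}$ contains exactly $K' \cdot \nu(q)$ elements of $q$, and $K' \cdot \nu(q) \leq K' \leq |q \cap (Z \setminus S)|$ by the previous step. So I simply select, within each $q$, any $K' \cdot \nu(q)$ points of $q \cap (Z \setminus S)$; taking the union over $q$ yields a set $S_{r+1} \subset Z \setminus S$ with $|S_{r+1}| = K'$ and $\text{dist}_{S_{r+1}} Q = \nu = \text{dist}_{S_{1}} Q$.

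The only real obstacle is arranging the constants so that the chain $\zeta/\varepsilon < \delta'/2$ and $(\varepsilon\delta'/2) N' > K'$ both hold; the inequality $\zeta < \varepsilon \delta'/2$ in the hypothesis and the choice of $N'$ are tailored precisely for this. After that, everything reduces to an elementary counting argument, with the convexity lemma providing the one non-trivial input.
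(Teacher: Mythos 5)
Your proof is correct and follows essentially the same route as the paper's: the same convex decomposition of $dist_{Z}Q$ over $S$ and $Z\backslash S$, the same application of Lemma \ref{convex} to bound $\left\Vert dist_{Z\backslash S}Q-dist_{Z}Q\right\Vert _{\mathcal{M}}$ by $\frac{\zeta }{\varepsilon }<\frac{\delta ^{\prime }}{2}$, the same cardinality estimate on the atoms of the trace of $Q$ on $Z\backslash S$, and the same atom-by-atom selection (equivalently, a $Q$-preserving injection of $S_{1}$ into $Z\backslash S$).
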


\begin{remark}
The above lemma says that if we are planning to take samples from a discrete
uniform measure space which is partitioned by $Q,$ where $Q$ has finitely
many atoms and none of very small measure, and if we are planning to do so
using samples of a known size ($K^{\prime }$), then if the samples will have
distribution close enough (within $\zeta $) to that of $Q,$ and if the
discrete space is large enough compared to $K^{\prime },$ we will be able to
take repeated samples (without replacement) until the space is nearly
exhausted (to within preassigned $\varepsilon ).$
\end{remark}

\subsection{Weak topology}

We collect here some basic facts about the metric $\left\Vert _{\qquad
}\right\Vert _{\mathcal{M}}$ and the topology it generates. As before, $G$
denotes a compact group with Haar measure $\lambda $ and two-sided invariant
metric $\rho .$

\begin{lemma}
\label{trans unif}Let $\varepsilon >0$ and $n\in \mathbb{N}$, and suppose
that the sequence $\gamma :\left[ n\right] \rightarrow G$ satisfies%
\begin{equation*}
\left\Vert dist_{\left[ n\right] }\gamma ,\lambda \right\Vert _{\mathcal{M}%
}<\varepsilon .
\end{equation*}%
Then for all $h\in g,$%
\begin{equation*}
\left\Vert dist_{\left[ n\right] }\gamma h,\lambda \right\Vert _{\mathcal{M}%
}<\varepsilon .
\end{equation*}

\begin{proof}
The proof is immediate.
\end{proof}
\end{lemma}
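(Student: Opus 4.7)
The plan is to exploit the right-invariance of Haar measure together with the right-invariance of $\rho$ (a consequence of its two-sided invariance). Writing $\mu = \mathrm{dist}_{[n]}\gamma$, observe first that $\mathrm{dist}_{[n]}(\gamma h)$ is exactly the pushforward $(R_h)_{\ast}\mu$ under the right-translation map $R_h:G\to G$, $g\mapsto gh$, and that $(R_h)_{\ast}\lambda=\lambda$.

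Next I would unpack the definition of $\|\cdot,\cdot\|_{\mathcal{M}}$. Fix $\delta>0$ and pick a coupling $\nu$ on $G\times G$ with marginals $\mu$ and $\lambda$ such that
\begin{equation*}
\int_{G\times G}\rho(x,y)\,d\nu(x,y)<\varepsilon+\delta.
\end{equation*}
Push $\nu$ forward by the diagonal map $(x,y)\mapsto(xh,yh)$ to obtain a measure $\nu'$ on $G\times G$; its marginals are $(R_h)_{\ast}\mu=\mathrm{dist}_{[n]}(\gamma h)$ and $(R_h)_{\ast}\lambda=\lambda$, so $\nu'$ is an admissible coupling for the pair $(\mathrm{dist}_{[n]}(\gamma h),\lambda)$. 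Right-invariance of $\rho$ then gives $\rho(xh,yh)=\rho(x,y)$, hence
\begin{equation*}
\int_{G\times G}\rho(u,v)\,d\nu'(u,v)=\int_{G\times G}\rho(x,y)\,d\nu(x,y)<\varepsilon+\delta.
\end{equation*}
Letting $\delta\to 0$ yields $\|\mathrm{dist}_{[n]}(\gamma h),\lambda\|_{\mathcal{M}}\leq\varepsilon$, and strict inequality is preserved since we started with a strict bound on $\|\mu,\lambda\|_{\mathcal{M}}$. There is no real obstacle here: the argument is a direct application of the change-of-variables formula, and indeed the authors themselves flag the proof as immediate.
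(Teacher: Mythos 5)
Your proof is correct and is precisely the ``immediate'' argument the paper has in mind: push a (near-)optimal coupling forward under $(x,y)\mapsto(xh,yh)$ and invoke the right-invariance of $\rho$ and of Haar measure. The only cosmetic point is that the $\delta\to 0$ limiting step is unnecessary and slightly muddies the strict inequality at the end---since $\left\Vert dist_{\left[ n\right] }\gamma ,\lambda \right\Vert _{\mathcal{M}}<\varepsilon$ you may choose a coupling $\nu$ with $\int \rho \,d\nu <\varepsilon$ outright, and its pushforward then witnesses $\left\Vert dist_{\left[ n\right] }\gamma h,\lambda \right\Vert _{\mathcal{M}}<\varepsilon$ directly (indeed the argument shows right translation is an isometry for $\left\Vert \cdot ,\cdot \right\Vert _{\mathcal{M}}$).
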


\begin{lemma}
\label{wk conv}Let $A$ be an open subset of $G.$ Then for all $\varepsilon
>0 $ there exists $\eta >0$ such that for all $n\in \mathbb{N}$, if $\gamma :%
\left[ n\right] \rightarrow G$ is a (finite) sequence in $G$ such that 
\begin{equation*}
\left\Vert dist_{\left[ n\right] }\gamma ,\lambda \right\Vert _{\mathcal{M}%
}<\eta
\end{equation*}%
then%
\begin{equation*}
\frac{1}{n}\sum_{i\in \left[ n\right] }\chi _{A}\left( \gamma \left(
i\right) \right) \geq \lambda \left( A\right) -\varepsilon
\end{equation*}

\begin{proof}
Using the fact that $\left\Vert _{\qquad }\right\Vert _{\mathcal{M}}$
metrizes the weak topology \cite{D1}, the conclusion is a statement of a
well-known fact about the weak topology, (see \cite{D2}) and holds in
general for arbitrary probability measures on a separable metric space $%
\left( M,\rho \right) $ of finite diameter in the place of $dist_{\left[ n%
\right] }\gamma \ $and $\lambda $ on $G.$
\end{proof}
\end{lemma}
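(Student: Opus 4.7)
The plan is to reduce this to a quantitative Portmanteau-type statement: the open-set functional $\mu \mapsto \mu(A)$ is lower semicontinuous for weak convergence, and since $\|\cdot,\cdot\|_{\mathcal{M}}$ metrizes the weak topology, a uniform $\eta$ should exist that is independent of $n$. The key is that the Kantorovich metric on a bounded metric space admits the Kantorovich--Rubinstein dual description $\|\mu_1,\mu_2\|_{\mathcal{M}} = \sup \{|\int f\,d\mu_1 - \int f\,d\mu_2| : f \text{ is $1$-Lipschitz}\}$, which follows at once from the infimum definition by estimating $|f(x)-f(y)| \le \rho(x,y)$ against any coupling.

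The first main step is to approximate $\chi_A$ from below by a Lipschitz function. Since $A$ is open, for each positive integer $k$ the function
\begin{equation*}
f_k(x) = \min\{1,\, k\cdot \rho(x,\, G \setminus A)\}
\end{equation*}
is $k$-Lipschitz, takes values in $[0,1]$, vanishes outside $A$, and satisfies $f_k \nearrow \chi_A$ pointwise. By monotone convergence $\int f_k\,d\lambda \to \lambda(A)$, so I may fix $k$ large enough that $\int f_k\,d\lambda \ge \lambda(A) - \varepsilon/2$. This $k$ depends only on $A,\varepsilon,\lambda$, not on $n$.

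The second step is to set $\eta = \varepsilon/(2k)$ and combine the two ingredients. Given any $\gamma\colon [n]\to G$ with $\|\mathrm{dist}_{[n]}\gamma,\lambda\|_{\mathcal{M}} < \eta$, writing $\nu_\gamma = \mathrm{dist}_{[n]}\gamma$, the duality yields
\begin{equation*}
\Bigl|\int f_k\,d\nu_\gamma - \int f_k\,d\lambda\Bigr| \le k\,\|\nu_\gamma,\lambda\|_{\mathcal{M}} < k\eta = \varepsilon/2.
\end{equation*}
Since $f_k \le \chi_A$ and $\nu_\gamma$ is the empirical measure of $\gamma$, this gives
\begin{equation*}
\frac{1}{n}\sum_{i\in[n]} \chi_A(\gamma(i)) = \nu_\gamma(A) \ge \int f_k\,d\nu_\gamma \ge \int f_k\,d\lambda - \varepsilon/2 \ge \lambda(A) - \varepsilon,
\end{equation*}
which is the desired conclusion.

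There is no real obstacle: the only subtle point is verifying Kantorovich--Rubinstein duality in the bounded setting, but this is the standard identification cited by the authors. The argument uses nothing about $G$ being a group or $\lambda$ being Haar; it applies to any probability measure on a bounded separable metric space, consistent with the authors' remark that the lemma is a general fact about the weak topology.
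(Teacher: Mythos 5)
Your proof is correct, and it is essentially the paper's argument made explicit: the authors simply cite the Portmanteau-type lower semicontinuity of $\mu \mapsto \mu(A)$ for open $A$ under the weak topology metrized by $\left\Vert \cdot ,\cdot \right\Vert _{\mathcal{M}}$, and your Lipschitz minorants $f_{k}$ together with the coupling estimate constitute the standard proof of that cited fact. One small remark: you only need the easy direction of Kantorovich--Rubinstein duality (bounding $\left\vert \int f\,d\mu _{1}-\int f\,d\mu _{2}\right\vert$ by $k$ times the transport cost of an arbitrary near-optimal coupling), which you in fact verify directly, so no appeal to the full duality theorem is required.
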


Combining the previous two lemmas gives us the following

\begin{lemma}
\label{wk conv 2}Let $A$ be an open subset of $G.$ Then for all $\varepsilon
>0$ there exists $\eta >0$ such that for all $n\in \mathbb{N}$, if $\gamma :%
\left[ n\right] \rightarrow G$ is a sequence such that 
\begin{equation*}
\left\Vert dist_{\left[ n\right] }\gamma ,\lambda \right\Vert _{\mathcal{M}%
}<\eta
\end{equation*}%
then for all $h\in G$%
\begin{equation*}
\frac{1}{n}\sum_{i\in \left[ n\right] }\chi _{A}\left( \gamma \left(
i\right) h\right) \geq \lambda \left( A\right) -\varepsilon
\end{equation*}
\end{lemma}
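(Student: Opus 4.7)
The plan is simply to chain the two preceding lemmas: Lemma \ref{wk conv} does all of the real work, and Lemma \ref{trans unif} is invoked only to transfer a Kantorovich bound on $dist_{[n]}\gamma$ over to the right-translated sequence $i \mapsto \gamma(i)h$. The whole argument should be a half-page linking exercise, with $h$ absorbed uniformly by the right-invariance already recorded in Lemma \ref{trans unif}.

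Given the open set $A \subseteq G$ and $\varepsilon > 0$, I would first apply Lemma \ref{wk conv} to this $A$ and $\varepsilon$ to obtain $\eta > 0$ such that for every $n$ and every sequence $\gamma' : [n] \to G$ with $\|dist_{[n]}\gamma', \lambda\|_{\mathcal{M}} < \eta$, one has $\frac{1}{n}\sum_{i \in [n]} \chi_A(\gamma'(i)) \geq \lambda(A) - \varepsilon$. This is the $\eta$ I claim works in Lemma \ref{wk conv 2}; crucially, it depends only on $A$ and $\varepsilon$, not on $n$ or on any sequence.

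Now fix $n \in \mathbb{N}$, a sequence $\gamma : [n] \to G$ with $\|dist_{[n]}\gamma, \lambda\|_{\mathcal{M}} < \eta$, and an arbitrary $h \in G$. Define $\gamma_h : [n] \to G$ by $\gamma_h(i) = \gamma(i)h$. Applying Lemma \ref{trans unif} to $\gamma$ (with $\eta$ in place of the $\varepsilon$ appearing there) yields $\|dist_{[n]}\gamma_h, \lambda\|_{\mathcal{M}} < \eta$. Feeding $\gamma_h$ into the consequence of Lemma \ref{wk conv} that was built into the choice of $\eta$ then gives
$$\frac{1}{n}\sum_{i \in [n]} \chi_A(\gamma(i)h) \;=\; \frac{1}{n}\sum_{i \in [n]} \chi_A(\gamma_h(i)) \;\geq\; \lambda(A) - \varepsilon,$$
which is precisely the asserted inequality. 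Since $h$ was arbitrary, the lemma follows.

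There is essentially no obstacle: the substantive content is already loaded into Lemma \ref{wk conv} (the fact that $\|\cdot\|_{\mathcal{M}}$ metrizes the weak topology, combined with weak lower semicontinuity of $\nu \mapsto \nu(A)$ for open $A$) and into Lemma \ref{trans unif} (right-invariance of the Kantorovich distance to Haar measure, immediate from the two-sided invariance of $\rho$ and the right-invariance of $\lambda$). The only thing gained by isolating Lemma \ref{wk conv 2} is the \emph{uniformity} in $h$, which is exactly what the chaining above delivers, since the $\eta$ produced by Lemma \ref{wk conv} is independent of $\gamma$ and hence of $\gamma_h$.
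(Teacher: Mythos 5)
Your proof is correct and is exactly the paper's argument: the paper gives no separate proof, simply noting that the lemma follows by ``combining the previous two lemmas,'' which is precisely the chaining you carry out (Lemma \ref{wk conv} supplies an $\eta$ depending only on $A$ and $\varepsilon$, and Lemma \ref{trans unif} transfers the hypothesis to the right-translate $\gamma h$).
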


\begin{definition}
Let $\nu $ be a Borel probability measure on a metric space $\left( M,\rho
\right) .$ A set $A\subset M$ is called a \emph{continuity set for }$\nu $
if $\nu \left( \bar{A}\backslash A^{o}\right) =0.$
\end{definition}

We note that for all $x\in M$ and all $\delta >0$ there exists $\delta
^{\prime }<\delta $ such that the ball $B_{\delta ^{\prime }}\left( x\right) 
$ is a continuity set. This is because at most countably many of the
pairwise disjoint circles $\left\{ x^{\prime }\mid \rho \left( x,x^{\prime
}\right) =\delta ^{\prime }\right\} $ can have positive measure. Therefore,
if $M$ is compact, then for every $\delta >0$ there is a finite partition of 
$M$ into continuity sets of diameter less than $\delta .$

The following lemma follows quickly from lemma \ref{wk conv} (which applies
to more general metric spaces, as we've indicated).

\begin{lemma}
\label{continuity sets}Let $\nu $ be a Borel probability measure on a
compact metric space $\left( M,\rho \right) .$ Let $Q=\left\{
Q_{1},...,Q_{t}\right\} $ be a finite partition of $M$ into continuity sets
for $\nu .$ Then for all $\zeta >0$ there exists $\tilde{\zeta}>0$ so that
if $\gamma :\left[ n\right] \rightarrow M$ is a sequence with 
\begin{equation*}
\left\Vert dist_{\left[ n\right] }\gamma ,\nu \right\Vert _{\mathcal{M}}<%
\tilde{\zeta}
\end{equation*}%
then 
\begin{equation*}
\left\Vert dist_{\left[ n\right] }\left( Q\left( \gamma \right) \right)
,distQ\right\Vert _{\mathcal{M}}<\zeta
\end{equation*}
\end{lemma}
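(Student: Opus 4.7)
The plan is to reduce the statement to a finite number of applications of lemma \ref{wk conv}, using the generalized form mentioned in its proof (valid on an arbitrary separable metric space of finite diameter, with $\nu$ in place of $\lambda$). The key idea is that because each atom $Q_j$ is a $\nu$-continuity set, both its interior $Q_j^o$ and the open set $M\setminus\overline{Q_j}$ carry $\nu$-mass $\nu(Q_j)$ and $1-\nu(Q_j)$ respectively, which will let us sandwich the empirical frequency of $Q_j$ between close upper and lower bounds. This is really the Portmanteau characterization of weak convergence, recast in terms of the Kantorovich metric.

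Given $\zeta>0$, first I would fix an auxiliary $\varepsilon>0$ small enough that $t\varepsilon<\zeta$ (so that an $\varepsilon$-error per atom will combine to less than $\zeta$ in the pushforward metric on the finite index set). For each $j\in\{1,\dots,t\}$, apply lemma \ref{wk conv} to the open set $Q_j^o$ to get $\eta_j^+>0$ such that
\begin{equation*}
\bigl\|\mathrm{dist}_{[n]}\gamma,\nu\bigr\|_{\mathcal{M}}<\eta_j^+
\quad\Longrightarrow\quad
\tfrac{1}{n}\bigl|\{i:\gamma(i)\in Q_j^o\}\bigr|\ \geq\ \nu(Q_j^o)-\varepsilon\ =\ \nu(Q_j)-\varepsilon,
\end{equation*}
and apply it again to the open set $M\setminus\overline{Q_j}$ to get $\eta_j^->0$ with
\begin{equation*}
\bigl\|\mathrm{dist}_{[n]}\gamma,\nu\bigr\|_{\mathcal{M}}<\eta_j^-
\quad\Longrightarrow\quad
\tfrac{1}{n}\bigl|\{i:\gamma(i)\notin\overline{Q_j}\}\bigr|\ \geq\ 1-\nu(Q_j)-\varepsilon.
\end{equation*}
Since $Q_j^o\subset Q_j\subset\overline{Q_j}$, the two inequalities together force
$\bigl|\tfrac{1}{n}|\{i:\gamma(i)\in Q_j\}|-\nu(Q_j)\bigr|<\varepsilon$.

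Now set $\tilde{\zeta}=\min_j\min(\eta_j^+,\eta_j^-)$. If $\|\mathrm{dist}_{[n]}\gamma,\nu\|_{\mathcal{M}}<\tilde\zeta$, then every atomic probability of the pushforward measure $\mathrm{dist}_{[n]}(Q(\gamma))$ lies within $\varepsilon$ of the corresponding atomic probability of $\mathrm{dist}\,Q$. Because $\mathrm{dist}\,Q$ and $\mathrm{dist}_{[n]}(Q(\gamma))$ live on the finite index set $\{1,\dots,t\}$ equipped with the discrete metric, the Kantorovich distance between them coincides with their total variation distance, which is bounded by $\sum_j\varepsilon=t\varepsilon<\zeta$, as required.

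The only real content is the continuity-set input: without $\nu(\overline{Q_j}\setminus Q_j^o)=0$, replacing $Q_j$ by its interior or closure would introduce an unavoidable gap, and no uniform $\tilde\zeta$ would suffice. Everything else is bookkeeping with the generalized lemma \ref{wk conv} and the fact that the Kantorovich metric with the discrete metric on a finite set reduces to total variation.
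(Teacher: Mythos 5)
Your proof is correct and follows exactly the route the paper intends: the paper gives no details, merely remarking that the lemma ``follows quickly from lemma \ref{wk conv},'' and your argument supplies precisely those details --- applying the generalized form of lemma \ref{wk conv} to $Q_j^{o}$ and to $M\setminus\overline{Q_j}$, using the continuity-set hypothesis to identify $\nu(Q_j^{o})=\nu(Q_j)=\nu(\overline{Q_j})$, and then passing from the resulting per-atom frequency bounds to the Kantorovich (equivalently, total variation) distance on the finite index set. No gaps.
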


We will occasionally need to implement a distribution match in a concrete
way. The following lemmas allow us to do this.

\begin{lemma}
\label{dist match 1}Let $\nu $ be a Borel probability measure on a compact
metric space $\left( M,\rho \right) .$ Then for all $\zeta \in \left(
0,1\right) $ there exists $\tilde{\zeta}>0$ so that if $\gamma _{1},\gamma
_{2}:\left[ n\right] \rightarrow M$ are sequences such that, for both $i=1$
and $2,$ 
\begin{equation*}
\left\Vert dist_{\left[ n\right] }\gamma _{i},\nu \right\Vert _{\mathcal{M}}<%
\tilde{\zeta}
\end{equation*}%
then there is a bijection $\phi :\left[ n\right] \rightarrow \left[ n\right] 
$ such that for $\left( 1-\zeta \right) -$most $i\in \left\{
1,2,...,n\right\} ,$ we have 
\begin{equation*}
\rho \left( \gamma _{1}\left( \phi \left( i\right) \right) ,\gamma
_{2}\left( i\right) \right) <\zeta .
\end{equation*}

\begin{proof}
Fix a finite partition $Q=\left\{ Q_{1},...,Q_{t}\right\} $ of $M$ into
continuity sets for $\nu .$ Choose $\tilde{\zeta}$ by lemma \ref{continuity
sets} with respect to $\nu ,Q$ and $\frac{\left( \zeta \right) ^{2}}{2}.$
Suppose $\gamma _{1}$ and $\gamma _{2}$ meet the conditions of this lemma.
Then%
\begin{equation*}
\left\Vert dist_{\left[ n\right] }\left( Q\left( \gamma _{1}\right) \right)
,dist_{\left[ n\right] }\left( Q\left( \gamma _{2}\right) \right)
\right\Vert _{\mathcal{M}}<\left( \zeta \right) ^{2}
\end{equation*}%
The conclusion follows from this.
\end{proof}
\end{lemma}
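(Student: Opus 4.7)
The plan is to reduce the continuous matching problem to a finite combinatorial one via a sufficiently fine partition of $M$, then construct $\phi$ cell by cell.

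First, using the observation immediately preceding the statement, I would fix a finite partition $Q=\{Q_{1},\dots,Q_{t}\}$ of the compact space $M$ into continuity sets for $\nu$, each of diameter strictly less than $\zeta$. I would then let $\tilde{\zeta}$ be the constant supplied by Lemma \ref{continuity sets} applied to $\nu$, $Q$, and the tolerance $\zeta^{2}/2$. For any $\gamma_{i}$ satisfying the hypothesis of the present lemma we get
\begin{equation*}
\left\Vert dist_{[n]}(Q(\gamma_{i})),\, dist_{\nu}Q\right\Vert _{\mathcal{M}} < \zeta^{2}/2,
\end{equation*}
so the triangle inequality yields $\left\Vert dist_{[n]}(Q(\gamma_{1})),\, dist_{[n]}(Q(\gamma_{2}))\right\Vert _{\mathcal{M}} < \zeta^{2}$.

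Now $Q$ has only $t$ atoms, so these are probability vectors on a finite set carrying the discrete metric, and on such a set the Kantorovich distance $\left\Vert\ \right\Vert _{\mathcal{M}}$ coincides with the total variation $\tfrac{1}{2}\sum |p_{j}-q_{j}|$. Writing $a_{j} = \#\{i\in[n] : \gamma_{1}(i)\in Q_{j}\}$ and $b_{j} = \#\{i\in[n] : \gamma_{2}(i)\in Q_{j}\}$, this gives $\sum_{j}|a_{j}-b_{j}| < 2\zeta^{2}n$. Construct $\phi$ as follows: within each cell $Q_{j}$, pick an arbitrary bijection between some $\min(a_{j},b_{j})$ indices in $\gamma_{2}^{-1}(Q_{j})$ and the same number of indices in $\gamma_{1}^{-1}(Q_{j})$; then extend $\phi$ over the leftover indices (at most $\tfrac{1}{2}\sum_{j}|a_{j}-b_{j}| < \zeta^{2}n$ on each side) to a bijection of $[n]$ in any manner. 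For a matched index $i$, the points $\gamma_{1}(\phi(i))$ and $\gamma_{2}(i)$ lie in a common cell $Q_{j}$ of diameter less than $\zeta$, so $\rho(\gamma_{1}(\phi(i)),\gamma_{2}(i)) < \zeta$; the set of unmatched indices has size below $\zeta^{2}n < \zeta n$.

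The only conceptually nontrivial step is the passage from closeness in the weak topology to a concrete index pairing, and this is precisely the role of Lemma \ref{continuity sets}: it guarantees that the $Q$-valued empirical measures for $\gamma_{1}$ and $\gamma_{2}$ are close on a \emph{finite} space, where closeness in $\left\Vert\ \right\Vert _{\mathcal{M}}$ is literally the minimum mismatch over all couplings. The use of continuity sets is essential so that no mass hides on boundaries, and the small-diameter requirement on the cells of $Q$ converts same-cell matching into the desired $\rho$-closeness. I do not anticipate any further obstacle; the argument is a clean two-move reduction.
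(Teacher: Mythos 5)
Your proposal is correct and follows essentially the same route as the paper: fix a finite partition of $M$ into continuity sets of diameter less than $\zeta$, invoke Lemma \ref{continuity sets} with tolerance $\zeta^{2}/2$ to get the $Q$-distributions of $\gamma_{1}$ and $\gamma_{2}$ within $\zeta^{2}$, and then match indices cell by cell. The paper leaves that last combinatorial step as ``the conclusion follows from this,'' and your explicit counting of the unmatched indices is exactly the intended argument.
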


\begin{lemma}
\label{dist match 2}Let $\nu $ be a Borel probability measure on a compact
metric space $\left( M,\rho \right) .$ Then for all $\zeta \in \left(
0,1\right) $ and $n\in \mathbb{N},$ there exists $\tilde{\zeta}$ and $N$ so
that for all $n_{1}>N,$ and for all sequences $\gamma _{1}:\left[ n\right]
\rightarrow M$ and $\gamma _{2}:\left[ n_{1}\right] \rightarrow M,$ such
that, for both $i=1$ and $2,$ 
\begin{equation*}
\left\Vert dist\gamma _{i},\nu \right\Vert _{\mathcal{M}}<\tilde{\zeta}
\end{equation*}%
there is a map $\phi :\left[ n_{1}\right] \rightarrow \left[ n\right] $ so
that for $\left( 1-\zeta \right) -$most $i\in \left[ n\right] ,$ we have 
\begin{equation*}
\rho \left( \gamma _{1}\left( \phi \left( i\right) \right) ,\gamma
_{2}\left( i\right) \right) <\zeta
\end{equation*}%
and for each $i\in \left[ n\right] ,$%
\begin{equation*}
\left\vert \frac{\left\vert \phi ^{-1}\left( i\right) \right\vert }{n_{1}}-%
\frac{1}{n}\right\vert <\zeta .
\end{equation*}%
(The last condition can be interpreted as saying $\phi $ nearly preserves
the normalized counting measures on $\left[ n_{1}\right] $ and $\left[ n%
\right] $).

\begin{proof}
The proof is similar to the proof of lemma \ref{dist match 1}.
\end{proof}
\end{lemma}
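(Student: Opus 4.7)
The plan is to mimic the proof of Lemma \ref{dist match 1}, adapting it to handle the fact that $\phi$ is no longer a bijection between equal-sized sets but rather a map between $[n_1]$ and $[n]$ required to nearly preserve the normalized counting measures. First I would fix a finite partition $Q=\{Q_1,\dots,Q_t\}$ of $M$ into continuity sets for $\nu$, each of diameter less than $\zeta$, as guaranteed by the remark preceding Lemma \ref{continuity sets}. Using Lemma \ref{continuity sets} applied to $\nu$, $Q$, and a tolerance depending on $\zeta$ and $t$, I would choose $\tilde{\zeta}$ small enough that whenever $\|\mathrm{dist}\,\gamma_i,\nu\|_{\mathcal{M}}<\tilde{\zeta}$ for $i=1,2$, the two $Q$-distributions $\mathrm{dist}(Q(\gamma_i))$ are within $\zeta/(4t)$ of $\mathrm{dist}_\nu Q$. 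Writing $A_k=\gamma_1^{-1}(Q_k)\subset[n]$ and $B_k=\gamma_2^{-1}(Q_k)\subset[n_1]$, this yields
\begin{equation*}
\Bigl|\frac{|A_k|}{n}-\nu(Q_k)\Bigr|<\frac{\zeta}{2t},\qquad \Bigl|\frac{|B_k|}{n_1}-\nu(Q_k)\Bigr|<\frac{\zeta}{2t}.
\end{equation*}

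Next I would choose $N$ large compared to $n/\zeta$ so that, for all $n_1>N$, any fiber of size $\lfloor n_1/n\rfloor$ or $\lceil n_1/n\rceil$ automatically satisfies the measure-preservation bound $\bigl||\phi^{-1}(i)|/n_1-1/n\bigr|<\zeta$. For each class index $k$, I would then distribute the indices of $B_k$ nearly uniformly over the indices of $A_k$, assigning to each element of $A_k$ a preimage of size approximately $|B_k|/|A_k|\approx n_1/n$. The total number of $B_k$-indices that can be matched in this way equals $|A_k|\cdot(n_1/n)$, up to rounding, which differs from $|B_k|$ by at most $n_1\bigl||A_k|/n-|B_k|/n_1\bigr|+O(|A_k|)$; summed over $k$, the total number of unmatched indices is at most a $\zeta$-fraction of $[n_1]$. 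On matched indices $i\in[n_1]$ both $\gamma_2(i)$ and $\gamma_1(\phi(i))$ lie in a common $Q_k$ of diameter less than $\zeta$, giving the required proximity; on the remaining $\zeta$-fraction I would define $\phi$ arbitrarily, adjusting so that every fiber size remains in $\{\lfloor n_1/n\rfloor,\lceil n_1/n\rceil\}$, which preserves the measure-preservation condition.

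The main obstacle is the simultaneous bookkeeping across the classes: the matching within each $Q_k$ must use fibers of size exactly $\lfloor n_1/n\rfloor$ or $\lceil n_1/n\rceil$, and the residual assignment of unmatched indices must not violate this uniformity. Taking $N$ sufficiently large relative to $n$ and $1/\zeta$ absorbs all the rounding and residual errors, reducing the construction to the same kind of class-by-class matching used in Lemma \ref{dist match 1}.
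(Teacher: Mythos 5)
Your proposal is correct and follows exactly the route the paper intends: the paper's proof is only the remark that the argument is ``similar to the proof of Lemma \ref{dist match 1},'' i.e.\ partition $M$ into continuity sets of diameter less than $\zeta$, use Lemma \ref{continuity sets} to make the $Q$-distributions of $\gamma_1$ and $\gamma_2$ close, and match class by class, which is precisely what you do. Your additional bookkeeping for the fiber sizes (taking $N$ large so that fibers of size $\lfloor n_1/n\rfloor$ or $\lceil n_1/n\rceil$ satisfy the near measure-preservation, and absorbing the rounding and cross-class residuals into the $\zeta$-fraction of unmatched indices) is exactly the adaptation the many-to-one setting requires.
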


We will refer to the maps $\phi $ of lemmas \ref{dist match 1} and \ref{dist
match 2} as $\zeta -$distribution matches between the sequences $\gamma _{1}$
and $\gamma _{2}.$

We will also need the following simple observation.

\begin{lemma}
\label{stable unif}Suppose that $\gamma :\left[ n\right] \rightarrow G$ is a
(finite) sequence in $G$ such that 
\begin{equation*}
\left\Vert dist_{\left[ n\right] }\gamma ,\lambda \right\Vert _{\mathcal{M}%
}<\eta
\end{equation*}%
and $\alpha :\left[ n\right] \rightarrow G$ is a sequence such that for all $%
i,$ $\rho \left( \alpha \left( i\right) ,id_{G}\right) <\eta .$ Then%
\begin{equation*}
\left\Vert dist_{\left[ n\right] }\alpha \gamma ,\lambda \right\Vert _{%
\mathcal{M}}<2\eta
\end{equation*}
\end{lemma}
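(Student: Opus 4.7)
The plan is to bound $\|dist_{[n]}\alpha\gamma,\lambda\|_{\mathcal{M}}$ via the triangle inequality by inserting $dist_{[n]}\gamma$ as an intermediate distribution, so the task reduces to showing that left multiplication by $\alpha$ moves each sample by at most $\eta$ in the Kantorovich metric.

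First I would construct an explicit coupling between $dist_{[n]}\alpha\gamma$ and $dist_{[n]}\gamma$ by pairing, for each $i\in[n]$, the point $\alpha(i)\gamma(i)$ with the point $\gamma(i)$, each given mass $1/n$. This is a probability measure on $G\times G$ whose marginals are exactly $dist_{[n]}\alpha\gamma$ and $dist_{[n]}\gamma$. By the left-invariance of $\rho$,
\begin{equation*}
\rho\bigl(\alpha(i)\gamma(i),\gamma(i)\bigr)=\rho\bigl(\alpha(i),id_{G}\bigr)<\eta
\end{equation*}
for every $i$, so the cost of this coupling is at most $\eta$, and hence
\begin{equation*}
\bigl\|dist_{[n]}\alpha\gamma,\,dist_{[n]}\gamma\bigr\|_{\mathcal{M}}<\eta.
\end{equation*}

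Second, I would combine this with the hypothesis $\|dist_{[n]}\gamma,\lambda\|_{\mathcal{M}}<\eta$ via the triangle inequality for $\|\cdot\|_{\mathcal{M}}$:
\begin{equation*}
\bigl\|dist_{[n]}\alpha\gamma,\lambda\bigr\|_{\mathcal{M}}\le\bigl\|dist_{[n]}\alpha\gamma,\,dist_{[n]}\gamma\bigr\|_{\mathcal{M}}+\bigl\|dist_{[n]}\gamma,\lambda\bigr\|_{\mathcal{M}}<2\eta,
\end{equation*}
which is the desired bound.

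There is no real obstacle here; the only point that could be called subtle is recognizing that the Kantorovich distance is controlled by any specific coupling one writes down (so no infimum needs to be computed), and that the two-sided invariance of $\rho$ is what makes pointwise perturbation of the sequence translate cleanly into a uniform bound on the transport cost. Everything else is the triangle inequality.
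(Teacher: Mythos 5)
Your proof is correct; the paper states this lemma without proof as a ``simple observation,'' and your coupling-plus-triangle-inequality argument is exactly the intended one. One small correction: the identity $\rho \left( \alpha \left( i\right) \gamma \left( i\right) ,\gamma \left( i\right) \right) =\rho \left( \alpha \left( i\right) ,id_{G}\right) $ cancels $\gamma \left( i\right) $ on the \emph{right} and so uses right-invariance rather than left-invariance, but since $\rho $ is assumed two-sided invariant this is immaterial.
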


\subsection{Rokhlin lemma and ergodicity}

Our argument will depend in an essential way on the Rokhlin lemma. In
particular, we will make use of Rokhlin towers in $G-$extensions, where the
towers are measurable with respect to the base factor.

\begin{definition}
Given a $G-$extension $\left( S,T,X,\sigma \right) ,$ a \emph{Rokhlin tower
measurable with respect to the base factor }$\left( T,\mathcal{A}\right) $
is a pairwise disjoint sequence of sets $R=\left\{ S^{i}B\right\} _{i\in %
\left[ K\right] },$ where each $S^{i}B\in \mathcal{A}$. The set $B$ is
called the $\emph{base}$ of the tower and $K$ its \emph{height}. If $\mu
\times \lambda \left( \bigcup_{i\in \left[ K\right] }S^{i}B\right) >1-\zeta
, $ we call $R$ a $\left( 1-\zeta \right) -K-$\emph{tower}. If $P$ is an $%
\mathcal{A}-$measurable partition, then a $P-$\emph{column of }$R$ is a
sequence $C=\left\{ S^{i}B^{\prime }\right\} _{i\in \left[ K\right] },$
where $B^{\prime }\subset B$ is an atom of the trace of $\bigvee_{i\in \left[
K\right] }S^{-i}P$ on $B.$ The sets $S^{i}B^{\prime }$ are referred to as 
\emph{levels} of the column $C$. A sequence of sets of the form $\left\{
S^{i}L\right\} _{i\in \left[ k\right] }$ where $L$ is a level of a column $C$
is called a \emph{column-block of }$C$ (of length $k$)$.$
\end{definition}

The term \textit{column-block} is used to emphasize the distinction between
a block consisting of levels of a column and a block consisting of points in
an orbit, when both are in play together during our construction below. To
be specific, in the arguments to follow we will have occasion to construct
Rokhlin towers of the above type with respect to a $G-$extension $\bar{S}%
_{0}=\left( \bar{T}_{0},\bar{X},\bar{\sigma}\right) ,$ but in the presence
of a $G-$speedup $\bar{S}=\bar{S}_{0}^{k}$ of $\bar{S}_{0},$ where $k:\bar{X}%
\rightarrow \mathbb{N}.$ Columns in these towers will be constructed so that 
$k$ is constant on every level, so that we can speak of column blocks that
are consecutive images of a level under the speedup $\bar{S}$ as opposed to $%
\bar{S}_{0}.$ In this case we will speak of $\bar{S}-$\emph{column-blocks},
to distinguish them from $\bar{S}_{0}-$\emph{column-blocks}$.$

All the language introduced above concerning Rokhlin towers, blocks and
column-blocks will apply in an obvious way to partial transformations. As
before, a prefix may be attached whenever we need to distinguish objects
associated with a tranformation $S_{0}$ from those associated with a speedup 
$S$ of $S_{0}.$

The Rokhlin lemma can be formulated as follows.

\begin{lemma}
\label{strR}Let $T$ be an ergodic measure preserving transformation of $%
\left( X,\mu \right) $, and $f$ a measurable function from $X$ to the metric
space $\left( M,\rho \right) $. Then for all $K\in \mathbb{N}$ and $%
\varepsilon >0$ there is a $\left( 1-\varepsilon \right) -K-$tower $%
R=\left\{ S^{i}B\right\} _{i=0}^{K-1}$ such that 
\begin{equation*}
\left\Vert dist_{B}\left( f\right) ,dist_{X}\left( f\right) \right\Vert _{%
\mathcal{M}}<\varepsilon .
\end{equation*}
\end{lemma}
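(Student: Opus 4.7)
My plan is to combine the ordinary Rokhlin lemma with the Birkhoff ergodic theorem via a two-scale construction. First, using the ergodic theorem applied to $f$, I fix $L=MK$ large enough that the set $X_{0}$ of points whose $T$-$f$-$L$-name is within $\varepsilon/4$ of $dist_{X}(f)$ in Kantorovich distance satisfies $\mu(X_{0})>1-\varepsilon/8$. The ordinary Rokhlin lemma then produces a $(1-\varepsilon/8)$-$L$-tower $\{T^{i}D_{0}\}_{i=0}^{L-1}$, and a Fubini argument combined with the ergodic theorem applied to $\chi_{X_{0}}$ at a larger time scale lets me refine $D_{0}$ to a subset $D$ of almost the same measure whose points all have $(\varepsilon/4)$-good $L$-$f$-names.

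Next, I partition $D$ measurably into $K$ equal-measure pieces $D^{(0)},\ldots,D^{(K-1)}$ via a labeling $l\colon D\to[K]$, and I set
\[
B=\bigsqcup_{l=0}^{K-1}\bigsqcup_{j=0}^{M_{l}-1}T^{l+jK}D^{(l)},
\]
where $M_{l}$ is chosen so that $l+M_{l}K\leq L$. Since the $D^{(l)}$ are pairwise disjoint, $\{T^{i}B\}_{i=0}^{K-1}$ is a disjoint $K$-tower, and its union nearly coincides with the full $L$-tower, giving measure exceeding $1-\varepsilon$.

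The decisive remaining step is to choose the labeling so that $dist_{B}(f)$ is within $\varepsilon$ of $dist_{X}(f)$. For a Lipschitz test function $h\colon M\to\mathbb{R}$ one computes
\[
dist_{B}(f)(h)=\frac{1}{\mu(B)}\int_{D}\sum_{j=0}^{M_{l(x)}-1}h(f(T^{l(x)+jK}x))\,d\mu(x);
\]
under a uniformly random choice of $l(x)\in[K]$, independent across $x$, the expected inner sum collapses (up to negligible boundary terms) to $\frac{1}{K}\sum_{m=0}^{L-1}h(f(T^{m}x))$, which for $x\in D$ is close to $M\int h\,d(dist_{X}(f))$ by construction of $X_{0}$. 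A standard Chebyshev/variance estimate combined with a union bound over a countable dense family of Lipschitz test functions then produces a deterministic labeling realizing the required match. This last step is the main obstacle: the naive choice $B=\bigsqcup_{j}T^{jK}D$ (single residue class) fails whenever $T^{K}$ has nontrivial invariant sets--- for instance, when $T$ has an eigenvalue of order dividing $K$---since then the sub-sampled average converges to a conditional expectation rather than to $dist_{X}(f)$; spreading the base across all $K$ residue classes effectively averages over phases and is what resolves the obstruction.
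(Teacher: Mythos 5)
The paper states Lemma \ref{strR} without proof, treating it as a standard strengthening of the Rokhlin lemma, so your argument must stand on its own. Its architecture (a tall tower of height $L=MK$, then assembling the height-$K$ base from all $K$ residue classes of levels) is the standard one, and your key observation is exactly right: the naive base $\bigcup_{j}T^{jK}D$ fails when $T^{K}$ is not ergodic, and spreading over all residues is what repairs it. Two steps, however, do not work as written. First, you cannot ``refine $D_{0}$ to a subset $D$ of almost the same measure whose points all have $(\varepsilon/4)$-good $L$-$f$-names'': $\mu(D_{0})\approx 1/L$ while $\mu(X\setminus X_{0})$ may be as large as $\varepsilon/8\gg 1/L$, so $D_{0}$ can be entirely disjoint from $X_{0}$; and sliding the base upward until it meets $X_{0}$ shortens the tower and destroys the $(1-\varepsilon)$ coverage. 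The genuine repair is two-scale: build a tower of height $NL$ with $N$ large, use the ergodic theorem to see that in most columns a $(1-\varepsilon/4)$-fraction of levels lie in $X_{0}$, and greedily select disjoint $L$-blocks based at levels inside $X_{0}$; the union $D$ of the bases of these blocks carries a disjoint family $\left\{ T^{i}D\right\} _{i\in\left[ L\right] }$ covering $1-O(\varepsilon)$, but it is not a subset of $D_{0}$. (This is essentially the covering device of Lemma \ref{model name 1}.) Second, ``a uniformly random choice of $l(x)\in[K]$, independent across $x$'' over a positive-measure set of $x$ is not a measurable construction --- there is no jointly measurable, essentially pairwise independent, nondegenerate family indexed by a Lebesgue space --- so the Chebyshev/variance estimate has no object to apply to. One must either randomize over a fine finite partition of $D$, or avoid randomness entirely.

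Both gaps close at once, and the ergodic theorem becomes unnecessary, if you replace the random labeling by a deterministic conditional splitting: partition the base $D_{0}$ of the height-$L$ tower into $K$ pieces $D^{(l)}$ of equal measure, each having the same conditional distribution of the full name $\bigvee_{m\in\left[ L\right] }T^{-m}f$ as $D_{0}$ (split each fiber of the name map into $K$ equal pieces, using the Lebesgue-space structure). Then $dist_{T^{l+jK}D^{(l)}}\left( f\right) =dist_{T^{l+jK}D_{0}}\left( f\right) $ for every $l$ and $j$, so $dist_{B}\left( f\right) $ is, up to $O(K/L)$ edge terms, the average of $dist_{T^{m}D_{0}}\left( f\right) $ over all $m\in\left[ L\right] $, i.e. the distribution of $f$ on the union of the tower; since the tower covers $1-\varepsilon/8$ of $X$ and $\rho\leq1$, this is automatically within $\varepsilon$ of $dist_{X}\left( f\right) $. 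Your residue-class idea is the essential content; it is only its probabilistic implementation that should be replaced.
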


We will need to arrange that the speedups we construct are ergodic. To do
this we will use the following criterion for ergodicity. Recall that a
transformation $T^{\prime }$ is said to be in the full group of $T$ if each
orbit of $T^{\prime }$ is contained in an orbit of $T.$

\begin{lemma}
\label{ergodicity}Fix a sequence $\left\{ C_{i}\right\} _{i=1}^{\infty }$ of
measurable sets in the probability space $\left( X,\mathcal{A},\mu \right) $
such that the algebra they generate is dense in the measure algebra of $%
\left( X,\mathcal{A},\mu \right) $. Suppose that $T$ is a transformation of $%
\left( X,\mathcal{A},\mu \right) $ and for all $i$ and $j$ such that $\mu
\left( C_{i}\right) <\mu \left( C_{j}\right) $ and for all $\varepsilon >0$
there is a transformation $T^{\prime }$ in the full group of $T$ such that $%
\mu \left( C_{j}\cap T^{\prime }\left( C_{i}\right) \right) >\left(
1-\varepsilon \right) \mu \left( C_{i}\right) .$ Then $T$ is ergodic.
\end{lemma}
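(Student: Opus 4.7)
The plan is to argue by contradiction. Suppose $T$ is not ergodic; then there exists a $T$-invariant measurable set $A$ with $0<\mu(A)<1$, and by swapping $A$ with $A^{c}$ if necessary I may assume $\mu(A)\leq 1/2$. The decisive preliminary observation is that every $T'$ in the full group of $T$ preserves $A$ modulo null sets: each $T'$-orbit is contained in a $T$-orbit, and since $A$ is $T$-invariant, every $T$-orbit lies entirely in $A$ or in $A^{c}$, so $T'$ cannot transport mass between $A$ and $A^{c}$. In particular $\mu(T'E\cap A)=\mu(E\cap A)$ and $\mu(T'E\cap A^{c})=\mu(E\cap A^{c})$ for every measurable $E$.

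Next I would use density of the algebra generated by $\{C_{i}\}$ to find two sets $B_{1},B_{2}$ in that algebra with $\mu(A\triangle B_{1})<\eta$ and $\mu(A^{c}\triangle B_{2})<\eta$, where $\eta$ is a small parameter to be chosen. Since $\mu(A)\leq 1/2$, choosing $\eta$ small makes $\mu(B_{1})<\mu(B_{2})$. The hypothesis (extended from $\{C_{i}\}$ to the generated algebra) then supplies, for any prescribed $\varepsilon>0$, a transformation $T'$ in the full group of $T$ with $\mu(B_{2}\cap T'(B_{1}))>(1-\varepsilon)\mu(B_{1})$.

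To reach the contradiction I would split $B_{2}\cap T'(B_{1})$ along $A$ and $A^{c}$. Using the preservation of $A$ by $T'$,
\begin{equation*}
\mu\bigl(B_{2}\cap T'(B_{1})\bigr)\leq \mu(B_{2}\cap A)+\mu(T'(B_{1})\cap A^{c})=\mu(B_{2}\cap A)+\mu(B_{1}\cap A^{c})<2\eta ,
\end{equation*}
while the hypothesis gives $\mu(B_{2}\cap T'(B_{1}))>(1-\varepsilon)\mu(B_{1})>(1-\varepsilon)(\mu(A)-\eta)$. Choosing for instance $\varepsilon<1/2$ and $\eta<\mu(A)/6$ produces a direct numerical contradiction, and hence $T$ must be ergodic.

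The only step I expect to require real care is passing from the stated hypothesis on the generators $C_{i},C_{j}$ to the corresponding statement for the Boolean combinations $B_{1},B_{2}$ in the algebra they generate. One can either replace $\{C_{i}\}$ by its countable generated algebra (the hypothesis transfers to a disjoint union $\bigsqcup_{k}C_{i_{k}}$ by defining $T'$ piecewise in the full group, once the target $C_{j}$ is large enough in each $A$-component), or argue directly that the necessary monotonicity of $i\mapsto\mu(C_{i}\cap A)$ in $\mu(C_{i})$ forced by the hypothesis is incompatible with density of the algebra in $\mathcal{A}$. Either route is essentially a routine combinatorial extension; the substantive content of the lemma is the observation that every full-group element respects the invariant set $A$.
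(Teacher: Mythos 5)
The paper states this lemma without proof, so there is no argument of the authors' to compare with; judged on its own, your proposal has the right central mechanism but a genuine gap at exactly the step you flag and then wave off as ``routine.'' The observation that every element of the full group of $T$ preserves a $T$-invariant set $A$ (mod null sets), and hence cannot carry a set concentrated on $A$ onto a set concentrated on $A^{c}$, is correct and is surely the intended content, and your closing arithmetic is fine (apart from the boundary case $\mu \left( A\right) =1/2$, where $\mu \left( B_{1}\right) <\mu \left( B_{2}\right) $ is not guaranteed; there you should approximate a subset $A^{\prime }\subset A$ with $\mu \left( A^{\prime }\right) $ slightly less than $1/2$ instead of $A$ itself). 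The real problem is that the hypothesis is stated only for the sets $C_{i}$ themselves, while your $B_{1},B_{2}$ are Boolean combinations of them, and the hypothesis does \emph{not} transfer from the generators to the generated algebra. Neither of your suggested patches works: one needs intersections, not just disjoint unions, to generate the algebra, and the monotonicity forced by the hypothesis (namely $\mu \left( C_{i}\right) <\mu \left( C_{j}\right) $ implies $\mu \left( C_{i}\cap A\right) \leq \mu \left( C_{j}\cap A\right) $ and $\mu \left( C_{i}\cap A^{c}\right) \leq \mu \left( C_{j}\cap A^{c}\right) $) is perfectly compatible with density of the generated algebra.

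Indeed, the literal statement is false. Let $X=[0,1)$, $A=[0,1/2)$, let $T$ act as an irrational rotation on each of $A$ and $A^{c}$ separately (so $T$ is not ergodic, but is ergodic on each half, and every full-group element of $T$ restricts to full-group elements of $T|_{A}$ and $T|_{A^{c}}$), and let $\phi \left( x\right) =x+1/2$. Take for $\left\{ C_{i}\right\} $ all sets $E\cup \phi \left( E^{\prime }\right) $ with $E,E^{\prime }$ finite unions of rational subintervals of $A$ and $\mu \left( E\right) =\mu \left( E^{\prime }\right) $. Each such set is balanced, $\mu \left( C_{i}\cap A\right) =\mu \left( C_{i}\cap A^{c}\right) =\mu \left( C_{i}\right) /2$, so $\mu \left( C_{i}\right) <\mu \left( C_{j}\right) $ forces both halves of $C_{i}$ to be smaller than the corresponding halves of $C_{j}$, and the standard fact that the full group of an ergodic transformation carries any set onto any set of equal measure produces $T^{\prime }$ with $T^{\prime }\left( C_{i}\right) \subset C_{j}$ exactly; the hypothesis holds. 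Yet the generated algebra is dense: for disjoint $E_{1}^{\prime },E_{2}^{\prime }$ with $\mu \left( E_{1}^{\prime }\right) =\mu \left( E_{2}^{\prime }\right) =\mu \left( E\right) $ one has $\left( E\cup \phi \left( E_{1}^{\prime }\right) \right) \cap \left( E\cup \phi \left( E_{2}^{\prime }\right) \right) =E$, which yields every rational-interval subset of $A$ of measure at most $1/4$ and hence, taking unions, all of them (and symmetrically in $A^{c}$). So the lemma requires the stronger hypothesis---evidently the intended one, and the one your argument actually uses---that the displayed full-group condition holds for all pairs from a family that is itself dense in the measure algebra. Under that reading your proof is essentially complete; under the literal reading no proof exists.
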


\section{Basic iterative procedure}

The key argument of the proof of our theorems is contained in the following
Distribution Improvement Lemma. This lemma shows that, given a partial $G-$%
speedup $\bar{S}$ of a $G-$extension $\bar{S}_{0}$, which approximates a $G-$%
extension $S,$ we can make a small modification of $\bar{S}$ to obtain a
partial $G-$speedup that is a much improved approximation of $S.$ The rest
of this section will be devoted to proving this lemma. The reader familiar
with Ornstein's proof of the isomorphism theorem for Bernoulli shifts will
recognize this as the counterpart of the \textquotedblleft fundamental
lemma\textquotedblright\ of that argument. The repeated application of this
lemma will quickly lead to proofs of the theorems we want, and these will be
found in the final section of the paper.

Before formulating the basic lemma, it will be convenient to introduce some
new language. First we describe the basic scheme by which orbits of $\bar{S}%
_{0}$ will be manipulated to obtain a partial speedup. This is a purely
combinatorial construction that we describe in terms of sequences of
integers.

Recall that for $n\in \mathbb{N}$, $\left[ n\right] $ denotes $\left\{
0,1,...,n-1\right\} .$ More generally, for $r\in \mathbb{R}^{\geq 0}$ we let 
$\left[ r\right] =\mathbb{Z\cap \lbrack }0,r).$

Let $M<M^{\prime }$ and $w\ $be elements of $\mathbb{N}.$ Suppose that $u:%
\left[ w\right] \rightarrow \left[ M^{\prime }-M+1\right] \ $such that for
all $s\in \left[ w\right] ,$ $u\left( s+1\right) -u\left( s\right) \geq M.$
For all $s\in \left[ w\right] $ define $\tilde{W}_{s}:\left[ M\right]
\rightarrow \left[ M^{\prime }\right] $ by $\tilde{W}_{s}\left( j\right)
=u\left( s\right) +j.$ We refer to $\left\{ \tilde{W}_{s}\right\} _{s\in %
\left[ w\right] }$ as a system of $w$ \emph{windows} of length $M$ in $\left[
M^{\prime }\right] .$

Suppose that $p\in \left[ w\right] $, and suppose that for each $l\in \left[
p\right] $ and each $j\in \left[ \frac{w-l}{p}\right] $ we are given $%
t_{j}^{l}:\left[ p\right] \rightarrow \left[ M\right] .$ Then we obtain $%
g_{j}^{l}:\left[ p\right] \rightarrow \left[ M^{\prime }\right] $ given by 
\begin{equation*}
g_{j}^{l}\left( i\right) =\tilde{W}_{jp+l+i}\left( t_{j}^{l}\left( i\right)
\right) .
\end{equation*}%
Suppose further that map $\left( l,j,i\right) \mapsto g_{j}^{l}\left(
i\right) $ is injective.

\begin{definition}
We refer to such a family $\Gamma =\left\{ g_{j}^{l}\right\} $ of increasing
subsequences of $\left[ M^{\prime }\right] $ as a \emph{cycle }$\left( \text{%
\emph{of}}\emph{\ }p-\text{sequences in }\left[ M^{\prime }\right] \right) $%
. For each $l,$ we refer to $\left\{ g_{j}^{l}\right\} _{j\in \left[ \frac{%
w-l}{p}\right] }$ as the $l^{th}$ \emph{pass} (through $\left[ M^{\prime }%
\right] $) of $\Gamma $. For each $l$ and $j$ we refer to the sequence $%
g_{j}^{l}$ as the $j^{th}$ \emph{stage} of the $l^{th}$ pass of $\Gamma $.
\end{definition}

We describe this informally: in each stage of the cycle $\Gamma $ the
function $g_{j}^{l}$ selects one point from each of a sequence of $p$
successive windows\ in $\left[ M^{\prime }\right] .$ The $l^{th}$ pass of $%
\Gamma $ is a sequence of stages, whose $0^{th}$ stage begins in window $%
\tilde{W}_{l},$ whose every stage begins at the window immediately after the
last window of the previous stage, and where as many stages are completed as
the sequence of windows can accommodate. When this scheme is implemented
below, the sequence $\left[ M^{\prime }\right] $ will correspond to an orbit
block of a transformation, and each $g_{j}^{l}$ will identify an orbit of a
partial speedup of that transformation.

We note that, for all $s\in \left[ w\right] ,$ 
\begin{equation*}
\left\vert \left\{ \left( l,j,i\right) \mid l\in \left[ p\right] ,j\in \left[
\frac{w-l}{p}\right] ,i\in \left[ p\right] ,\text{ and }jp+l+i=s\right\}
\right\vert \leq p
\end{equation*}%
and more importantly, if $\left\vert s\right\vert \geq p-1$ and $\left\vert
w-s\right\vert \geq p-1,$ then the above cardinality equals $p$, and 
\begin{equation*}
\left\{ i\in \left[ p\right] \mid \left( \exists l,j\right) \text{ such that 
}jp+l+i=s\right\} =\left[ p\right]
\end{equation*}%
Indeed, if $\left\vert s\right\vert \geq p-1$ and $\left\vert w-s\right\vert
\geq p-1,$ then on pass $l,$ $g_{j}^{l}\left( i\right) $ lies in the range
of $W_{s}$ for exactly one value $i,$ and in the subsequent pass $l+1,$ the
corresponding value of $i$ is one less $\left( \func{mod}p\right) $.

Next we describe the special form that each of the partial speedups that we
construct will have. To describe this form, suppose that $\left( \bar{S},%
\bar{T},\bar{\sigma},\bar{X}\right) $ is a partial $G-$speedup of a $G-$%
extension $\left( \bar{S}_{0},\bar{T}_{0},\bar{\sigma}_{0},\bar{X}\right) $
on a space $\left( \bar{X}\times G\right) .$ Let $k:Dom\left( \bar{T}\right)
\rightarrow \mathbb{N}$ denote the measurable function such that for every $%
\bar{x}\in Dom\left( \bar{T}\right) ,$ $\bar{S}\left( \bar{x},g\right) =\bar{%
S}_{0}^{k\left( \bar{x}\right) }\left( \bar{x},g\right) $ and $\bar{\sigma}=%
\bar{\sigma}_{0}^{\left( k\left( \bar{x}\right) \right) }$ the associated
\textquotedblleft skewing\textquotedblright\ function as in $\left( \ref%
{sigma cocycle}\right) .$

\begin{definition}
If $\bar{P}$ is a measurable partition of $\bar{X}$, we will say that the
pair $\left( \bar{S},\bar{P}\right) $ is a \emph{regular}\ partial $G-$%
speedup of $\bar{S}_{0}$ if the following conditions are met:

\begin{enumerate}
\item There is a set $\bar{B},$ measurable with respect to $\bar{X},$ such
that for some $L\in \mathbb{N},$ the sets $\left\{ \bar{S}^{i}\bar{B}%
\right\} _{i\in \left[ L\right] }$ are disjoint, and the domain of $\bar{S}$
is precisely $\bigcup_{i\in \left[ L-1\right] }\bar{S}^{i}\bar{B}.$ (We
refer to $\bar{R}=\tbigcup\limits_{i\in \left[ L\right] }\bar{S}^{i}\bar{B}$
as the \textit{speedup tower} for $\bar{S}$ and to $\bar{B}$ as its\textit{\
base}).

\item $k$ is bounded

\item For all $\left( \bar{x},g\right) ,\left( \bar{x}^{\prime },g\right)
\in \bar{B},$%
\begin{equation*}
\bigvee_{i\in \left[ L\right] }\bar{S}^{-i}\left( \bar{P}\vee \bar{c}\right)
\left( \bar{x},g\right) =\bigvee_{i\in \left[ L\right] }\bar{S}^{-i}\left( 
\bar{P}\vee \bar{c}\right) \left( \bar{x}^{\prime },g\right) .
\end{equation*}%
(Recall that $\bar{c}$ denotes the projection on the $G-$coordinate).
\bigskip \newline
If, in addition, for some $n\in \mathbb{N}$ and $\delta >0$ we have the
further properties that\bigskip

\item $L$ is a multiple of $n,$ and for each $\left( \bar{x},g\right) \in 
\bar{B},$ 
\begin{equation*}
\left\Vert dist_{\bar{S}^{\left[ \frac{L}{n}\right] n}\left( \bar{x}%
,g\right) }\bigvee_{i\in \left[ n\right] }\bar{S}^{-i}\left( \bar{P}\vee 
\bar{c}\right) ,dist_{Dom\left( \bar{S}^{n}\right) }\bigvee_{i\in \left[ n%
\right] }\bar{S}^{-i}\left( \bar{P}\vee \bar{c}\right) \right\Vert _{%
\mathcal{M}}<\delta .
\end{equation*}%
That is, when the $\bar{S}-L-$orbit of $\left( \bar{x},g\right) $ is divided
into disjoint, consecutive $n-$blocks, those $n-$blocks have a distribution
of names that is $\delta -$close to the full $n-$distribution of the speedup 
$\bar{S},$ and

\item $\bar{\mu}\times \lambda \left( Dom\left( \bar{S}\right) \right)
>1-\delta .$\newline
then we will say that the pair $\left( \bar{S},\bar{P}\right) $ is $\left(
n,\delta \right) -$\emph{regular}.
\end{enumerate}
\end{definition}

We note that if $\mu \times \lambda \left( Dom\left( \bar{S}\right) \right)
>1-\delta ,$ then we must have $L>\frac{1-\delta }{\delta }.$ (If $l$ is the
measure of a single level of the speedup tower, then $l<\delta ,$ but $%
\left( L-1\right) l>1-\delta $ so $L>L-1>\frac{1-\delta }{l}>\frac{1-\delta 
}{\delta }.$)

\begin{definition}
If $\left( \bar{S},\bar{P}\right) $ is an $\left( n,\delta \right) -$regular
partial $G-$speedup of $\bar{S}_{0}\ $as above, we refer to the set 
\begin{equation*}
\Lambda _{n}\left( \bar{S}\right) =\bigcup_{i\in \left[ \frac{L}{n}\right] }%
\bar{S}^{in}\left( \bar{B}\right)
\end{equation*}%
as the $n-$\emph{ladder} of $\bar{S}.$ We refer to a block of the form $\bar{%
S}^{\left[ n\right] }\left( \bar{x},g\right) ,$ where $\left( \bar{x}%
,g\right) \in \Lambda _{n}\left( \bar{S}\right) $ as a \emph{ladder block}
of $\bar{S}$. Suppose that $\left( \bar{x}^{\prime },g^{\prime }\right) $ is
a point in the ladder block $\bar{S}^{\left[ n\right] }\left( \bar{x}%
,g\right) ,$ and $\tilde{S}$ is another partial transformation on $\bar{X}%
\times G.$ We say that the ladder block of $\left( \bar{x}^{\prime
},g^{\prime }\right) $ is \emph{broken by }$\tilde{S}$ if for some $i\in %
\left[ 0,n-2\right] ,$ $\bar{S}\left( \bar{S}^{i}\left( \bar{x},g\right)
\right) \neq \tilde{S}\left( \bar{S}^{i}\left( \bar{x},g\right) \right) .$
\end{definition}

\begin{lemma}
\label{dil}(Distribution Improvement Lemma) For all $\varepsilon >0$ and for
all open $A_{2}\subset G,$ there exist $\delta >0$ and $n\in \mathbb{N}$
such that, if $\left( S,T,\sigma ,X\right) $ and $\left( \bar{S}_{0},\bar{T}%
_{0},\bar{\sigma}_{0},\bar{X}\right) $ are\ ergodic $G-$extensions on $%
\left( X\times G\right) $ and $\left( \bar{X}\times G\right) $ respectively,
and $\left( \bar{S},\bar{T},\bar{\sigma},\bar{X}\right) $ is a partial $G-$%
speedup of $\bar{S}_{0}$ and $P$ and $\bar{P}$ are finite partitions of $X$
and $\bar{X}$ such that $\left( \bar{S},\bar{P}\right) $ is $\left( n,\delta
\right) -$regular, such that 
\begin{equation}
\left\Vert dist_{X\times G}\bigvee_{i\in \left[ n\right] }S^{-i}\left( P\vee
c\right) ,dist_{Dom\left( \bar{S}^{n}\right) }\bigvee_{i\in \left[ n\right] }%
\bar{S}^{-i}\left( \bar{P}\vee \bar{c}\right) \right\Vert _{\mathcal{M}%
}<\delta  \label{delta dist match}
\end{equation}%
then for all $\delta _{1}>0$ and all sufficiently large $n_{1},$ and all $%
A_{1}\in \mathcal{\bar{A}}$, there is a partial\ $G-$speedup $\bar{S}_{1}$
of $\bar{S}_{0}$ and a partition $\bar{P}_{1}$ and a measurable function $%
\bar{\alpha}:\bar{X}\rightarrow G$ such that 
\begin{equation}
\left( \bar{S}_{1}^{\bar{\alpha}},\bar{P}_{1}\right) \text{ is }\left(
n_{1},\delta _{1}\right) -\text{regular,}  \label{n_1,d_1 regular}
\end{equation}%
\begin{equation}
\left\vert \bar{P}-\bar{P}_{1}\right\vert <\varepsilon ,  \label{close P}
\end{equation}%
\begin{equation}
\int \rho \left( \bar{\alpha}\left( \bar{x}\right) ,id_{G}\right) d\bar{\mu}%
\left( \bar{x}\right) <\varepsilon ,  \label{small alpha}
\end{equation}%
if $D$ denotes the set of points in the speedup tower of $\bar{S}$ such
whose ladder block is broken by $\bar{S}_{1},$ then%
\begin{equation}
\bar{\mu}\times \lambda \left( D\right) <\delta _{1},  \label{small break}
\end{equation}%
\begin{equation}
\left\Vert dist_{X\times G}\bigvee_{i\in \left[ n_{1}\right] }S^{-i}\left(
P\vee c\right) ,dist_{Dom\left( \bar{S}_{1}^{n_{1}}\right) }\bigvee_{i\in %
\left[ n_{1}\right] }\bar{S}_{1}^{-i}\left( \bar{P}_{1}\vee \bar{\alpha}\bar{%
c}\right) \right\Vert _{\mathcal{M}}<\delta _{1},  \label{dist match}
\end{equation}%
and setting $A=A_{1}\times A_{2},$ the set of $y\in \Lambda _{n_{1}}\left( 
\bar{S}_{1}\right) $ such that 
\begin{equation}
\frac{1}{n_{1}}\sum_{i\in \left[ n_{1}\right] }\boldsymbol{1}_{A}\left( \bar{%
S}_{1}^{i}\left( y\right) \right) >\left( \bar{\mu}\times \lambda \right)
\left( A\right) -\varepsilon  \label{good A}
\end{equation}%
has measure greater than $\left( 1-\varepsilon \right) \left( \bar{\mu}%
\times \lambda \right) \left( \Lambda _{n_{1}}\left( \bar{S}_{1}\right)
\right) .$
\end{lemma}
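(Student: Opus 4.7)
The plan is to build $\bar{S}_1$ by selecting $\bar{S}$-ladder blocks of length $n$ from the speedup tower of $\bar{S}$ and concatenating groups of $p := n_1/n$ of them to form $n_1$-ladder blocks of $\bar{S}_1$, guided by a model $n_1$-name drawn from $S$. On each selected ladder block I will set $\bar{S}_1 = \bar{S}$, so no such block is broken; between successive selected blocks $\bar{S}_1$ will take a big $\bar{S}_0$-iterate jump. Fix $\zeta>0$ far smaller than $\varepsilon$ and $\delta_1$, and a finite partition $\mathcal{Q}$ of $G$ into continuity sets for $\lambda$ of diameter less than $\zeta$ refining $\{A_2,G\setminus A_2\}$. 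Applying Lemma \ref{model name 2} to $S$ with $f := P\vee\mathcal{Q}(c)$ at scales $n$ and $n_1$ produces, for every sufficiently large $n_1$ and every sufficiently large $L_1$ divisible by $n_1$, a model sequence $F:[L_1]\to\mathrm{range}(f)$ whose $n_1$-distribution is $\zeta$-close to that of $\bigvee_{i\in[n_1]}S^{-i}f$, which is a concatenation of $L_1/n_1$ consecutive $n_1$-blocks of the same good distribution, each of which is $(1-\zeta)$-covered by $n$-blocks with $\zeta$-good $n$-distribution. By (\ref{delta dist match}) these $n$-distributions are close to that of the $\bar{S}$-ladder-block names, so every model $n$-block has many matching $\bar{S}$-ladder blocks available.

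Next, use Lemma \ref{strR} to pick a sufficiently tall $\bar{S}_0$-Rokhlin tower; within a typical column many full columns of the original $\bar{S}$-speedup tower fit, giving abundant $\bar{S}$-ladder blocks whose $\bar{P}\vee\mathcal{Q}(\bar{c})$-names distribute $\delta$-closely to the full $n$-distribution of $\bar{S}$. Subdivide the column into $w=L_1/n$ consecutive windows, each still containing many $\bar{S}$-ladder blocks with locally good $n$-distribution. Build a cycle $\Gamma=\{g_j^l\}$ of $p$-sequences in the column so that for each $(l,j,i)$ the point $g_j^l(i)$ is the starting position of an $\bar{S}$-ladder block in window $jp+l+i$ whose $\bar{P}\vee\mathcal{Q}(\bar{c})$-name matches, via Lemma \ref{sampling}, the $i$-th of the $p$ model $n$-blocks covering the $(jp+l)$-th model $n_1$-block of $F$. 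Apply Lemma \ref{exhaustion} after each pass $l$ to ensure that the matching pool in each window is not depleted and that all $p$ passes succeed, ultimately selecting $(1-\delta_1/2)$ of the $\bar{S}$-ladder blocks. Define $\bar{S}_1$ to equal $\bar{S}$ on the interior of each selected ladder block and to take a big $\bar{S}_0$-iterate jump between consecutive selected blocks within a stage and between the final block of one stage and the initial block of the next.

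Define $\bar{P}_1$ to carry the $P$-name prescribed by $F$ on each selected ladder block and to equal $\bar{P}$ elsewhere; Lemma \ref{dist match 1} then gives $|\bar{P}_1-\bar{P}|<\varepsilon$, which is (\ref{close P}). Define $\bar{\alpha}$ to rotate the $G$-coordinate at each selected level by the (small) element of $G$ needed to bring the actual $\bar{c}$-value exactly onto the $c$-value prescribed by $F$, and to equal $id_G$ elsewhere; because $\mathcal{Q}$ has mesh less than $\zeta$ and only a small fraction of points lie outside selected blocks, we get $\int \rho(\bar{\alpha},id_G)\,d\bar{\mu}<\varepsilon$, which is (\ref{small alpha}). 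The broken ladder blocks are exactly the unselected ones, of measure less than $\delta_1$, giving (\ref{small break}). By construction the $\bar{P}_1\vee\bar{\alpha}\bar{c}$-names along the $\bar{S}_1$-speedup tower literally equal $F$ on the selected part, so $(\bar{S}_1^{\bar{\alpha}},\bar{P}_1)$ is $(n_1,\delta_1)$-regular and (\ref{dist match}) holds. Finally (\ref{good A}) follows from Lemma \ref{wk conv 2} applied to the $\bar{\alpha}\bar{c}$-values along each $\bar{S}_1$-$n_1$-orbit: that sequence has distribution close to $\lambda$ by the $n_1$-match, so the frequency of visiting $A_2$ exceeds $\lambda(A_2)-\varepsilon$ on most orbits, and the $\bar{P}_1$-match handles the factor $A_1$.

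The principal technical obstacle is the simultaneous two-scale distribution match: each per-window selection of a ladder block must respect both the $n$-name demanded at its slot and the $n_1$-scale profile of the concatenation; the iterated use of Lemma \ref{exhaustion} across the $p$ passes is what keeps the per-name-type supply from being exhausted before the last pass. A secondary subtlety is keeping $\bar{\alpha}$ small while enforcing exact $c$-matching with $F$, which is made possible by the fine mesh of the continuity-set partition $\mathcal{Q}$.
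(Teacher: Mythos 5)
Your overall architecture matches the paper's: a model name from Lemma \ref{model name 2}, a pair of Rokhlin towers, a cycle of $p$-sequences selecting one useful $\bar{S}$-ladder block per window, Lemma \ref{exhaustion} to keep the supply from running out, and the definitions of $\bar{P}_1$ and $\bar{\alpha}$ by copying $F$. Conclusions (\ref{n_1,d_1 regular})--(\ref{dist match}) would go through essentially as you describe. But there is a genuine gap at conclusion (\ref{good A}), and it is not a technicality: it is the mechanism that makes the eventual limit speedup ergodic in Lemma \ref{factor c}.

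You write that ``the $\bar{P}_1$-match handles the factor $A_1$.'' It does not. The set $A_1$ is an \emph{arbitrary} measurable subset of $\bar{X}$, unrelated to $\bar{P}$, to $\bar{P}_1$, and to the target system $S$; two $\bar{S}$-ladder blocks can have identical $\bigvee_{i\in\left[ n\right] }\bar{S}^{-i}\left( \bar{P}\vee \bar{c}\right)$-names and yet one can lie entirely inside $A_{1}\times G$ and the other entirely outside it. Your selection of ladder blocks is driven only by the $\bar{P}\vee \mathcal{Q}\left( \bar{c}\right)$-name demanded by the slot in $F$, so nothing prevents it from systematically over- or under-selecting blocks that meet $A_{1}$; in that case the $\bar{S}_{1}$-$n_{1}$-orbits visit $A_{1}\times G$ with the wrong frequency and (\ref{good A}) fails. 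The paper's proof devotes its Steps 1--3 precisely to this point: it forms the partition $\mathcal{R}_{0}=\bigvee_{i\in \left[ n\right] }\bar{S}^{-i}\left( \boldsymbol{1}_{A_{1}}\right)$ of the useful blocks, repairs its small atoms (merging atoms of conditional measure below $\delta$ into a ``miscellaneous'' atom so that Lemma \ref{sampling} applies) to obtain $\widetilde{\mathcal{R}}_{0}$, and then assigns to each real $n$-block of $F$ an atom of $Q_{0}\vee \widetilde{\mathcal{R}}_{0}$ with the correct conditional statistics, so that the chosen blocks fairly sample the $\boldsymbol{1}_{A_{1}}$-names \emph{within} each $n$-name class. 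Only after that does the independence of $\bar{c}$ from $\boldsymbol{1}_{A_{1}}$ together with Lemma \ref{wk conv 2} give the frequency estimate for $A=A_{1}\times A_{2}$ on whole $G$-fibers. Your proposal needs this extra layer of sampling (and the small-atom repair that makes it legitimate); as written, the argument for (\ref{good A}) does not close.
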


\begin{proof}
Fix $\varepsilon >0$ and an open set $A_{2}\subset G.$ Choose $\varepsilon
^{\prime }$ as in lemma \ref{wk conv 2} with respect to $\frac{\varepsilon }{%
100}$ and $A_{2}.$ (We may assume that $\varepsilon ^{\prime }<\min \left\{
1,\varepsilon \right\} ).$ Choose $n$ and $\delta $ so that $n>\frac{100}{%
\varepsilon }$ and $2^{-\left( n+1\right) }<\frac{\varepsilon ^{\prime }}{100%
}$ and $\delta <\frac{\left( \varepsilon ^{\prime }\right) ^{4}}{2^{n}\left(
100\right) }.$ The number $\delta $\ is chosen in part so that the domain of 
$\bar{S}^{n}$ has measure greater than $1-\frac{\varepsilon }{100}$ and also
so that the union of the atoms of measure less than $\delta $\ in any
distribution with $\leq 2^{n}$\ atoms has measure less than $\frac{\left(
\varepsilon ^{\prime }\right) ^{4}}{100}.$ Additional features of the
dependence of $\delta $\ on $\varepsilon ^{\prime }$\ and $n$\ will be given
below. Suppose the above hypotheses are met concerning the given $G-$%
extensions and the speedup $\bar{S}.$ Let $\bar{R}$ denote the speedup tower
for $\bar{S}.$ We may assume, without loss of generality, that $\left( \bar{%
\mu}\times \lambda \right) \left( \bar{R}\right) <1-\frac{\delta }{2}.$

Fix a $t-$element partition $Q$ of $\left( P\times G\right) ^{\left[ n\right]
}$ whose atoms are continuity sets of diameter less than $\frac{\delta }{100}
$ (in the \textquotedblleft max\textquotedblright\ metric $\rho ^{\prime }$
using the discrete metric on $P$) for the measure $dist_{X\times
G}\bigvee_{i\in \left[ n\right] }S^{-i}\left( P\vee c\right) .$ Thus, for
all atoms $q\in Q$, and all $x,y\in q$ the $P-n$ names of $x$ and $y$ are
equal, and the $c-n$ names are uniformly close.

Let $n_{1}\in \mathbb{N}$ and $\delta _{1}>0$ be given. We may assume that $%
\delta _{1}<\delta ,$ and we will have occasion to replace $\delta _{1}$ by
an even smaller number during the argument. Fix a measurable set $%
A_{1}\subset \bar{X}$, and let $A=A_{1}\times A_{2}$.

Fix $\zeta >0,$ whose size will be determined by what follows. Choose $K\in 
\mathbb{N}$ by lemma $\ref{sampling}$ with respect to $n,\delta $ and $\zeta 
$.

Using lemma $\ref{model name 2}$ we fix a model name $F$ for the
\textquotedblleft target\textquotedblright\ process $\left( S,P\vee c\right)
,$ so that (replacing $n_{1}$ by a larger number if necessary, which we
still call $n_{1}$) $n_{1}>n/\zeta $, $\left\vert F\right\vert >n_{1}/\zeta $
and\newline
(a.)%
\begin{equation*}
\left\Vert dist_{\left[ \left\vert F\right\vert -n_{1}+1\right] }F_{n_{1}},\
dist_{X\times G}\bigvee_{i\in \left[ n_{1}\right] }S^{-i}\left( P\vee
c\right) \right\Vert _{\mathcal{M}}<\frac{\delta _{1}}{100}
\end{equation*}%
(b.)%
\begin{equation*}
\left\Vert dist_{\left[ \frac{\left\vert F\right\vert }{n_{1}}\right]
n_{1}}F_{n_{1}},\ dist_{X\times G}\bigvee_{i\in \left[ n_{1}\right]
}S^{-i}\left( P\vee c\right) \right\Vert _{\mathcal{M}}<\frac{\delta _{1}}{%
100}
\end{equation*}%
(c.) For every $k\in \left[ \frac{\left\vert F\right\vert }{n_{1}}\right] ,$ 
$H_{k}:=F_{n_{1}}\left( kn_{1}\right) \ $is $\left( 1-\zeta \right) -$%
covered by disjoint $n-$blocks, such that if $\mathcal{J}_{k}$ is the set of
their initial positions, 
\begin{equation*}
\left\Vert dist_{\mathcal{J}_{k}}F_{n},\ dist_{X\times G}\bigvee_{i\in \left[
n\right] }S^{-i}\left( P\vee c\right) \right\Vert _{\mathcal{M}}<\tilde{\zeta%
},
\end{equation*}%
where $\tilde{\zeta}$ is determined by lemma \ref{continuity sets} with
respect to $\zeta $.\newline
(d.) For every $k,$ and for every $q\in Q$, $\left\vert \left\{ j\in 
\mathcal{J}_{k}\mid Q\left( F_{n}\left( j\right) \right) =q\right\}
\right\vert \geq K.$

We may assume further that for all $k$ and $k^{\prime },$ $\left\vert 
\mathcal{J}_{k}\right\vert =\left\vert \mathcal{J}_{k^{\prime }}\right\vert
. $ For each $k>0,$ we fix a bijection $\psi _{k}:\mathcal{J}_{k}\rightarrow 
\mathcal{J}_{0}$ such that $\left\Vert dist_{\mathcal{J}_{k}}F_{n},dist_{%
\mathcal{J}_{k}}F_{n}\psi _{k}\right\Vert _{\mathcal{M}}<\zeta .$ (To be
precise, we must have originally chosen $\tilde{\zeta}$ as in lemma \ref%
{dist match 1}$,$ so that such $\zeta -$distribution matches are available
here). We let $\psi _{0}:\mathcal{J}_{0}\rightarrow \mathcal{J}_{0}$ be the
identity.

For each $k,$ and each $J\in \mathcal{J}_{k},$ we let $b\left( J\right) $
denote the $n-$block with initial position $J,$ and refer to $b\left(
J\right) $ as a \emph{real} $n-$block of $H_{k}.$ Each component of $%
H_{k}\backslash \left( \cup _{J\in \mathcal{J}_{k}}b\left( J\right) \right) $
is further partitioned into blocks of length no greater than $n,$ using as
many blocks of length $n$ as possible. We refer to these blocks as \emph{%
pseudo} $n-$blocks. We let $\mathcal{J}_{k}^{\prime }=\left\{
J_{k,i}\right\} _{i}$ denote the set of all initial positions of $n-$blocks
(real and pseudo) in $H_{k},$ listed in order. We also write $\left\{
J_{k,i_{m}}\right\} $ (respectively $\left\{ J_{k,j_{m}}\right\} )$ for the
initial positions of the real (respectively pseudo) $n-$blocks in $H_{k},$
listed in order$.$

We note that as a consequence of (c.), for every $k$ and $k^{\prime },$ the
distributions of $Q$ on the disjoint $n-$blocks of $H_{k}$ and $H_{k^{\prime
}}$ are $2\zeta -$close to each other.

The need to replace $n_{1}$ with a larger number is the reason that the
statement of the lemma says \textquotedblleft for all sufficiently large $%
n_{1}$\textquotedblright .

To support our construction, we use a pair of Rokhlin towers. First, we
construct a Rokhlin tower $R$ for $\bar{S}_{0}$, with base $B\in \mathcal{%
\bar{A}}$ and height $M$ so that $\left( \bar{\mu}\times \lambda \right)
\left( R\right) >1-\zeta ,$ and for all $y\in B,$ $\bar{S}_{0}^{\left[ M%
\right] }\left( y\right) $ admits a disjoint collection of ladder blocks of $%
\bar{S},$ such that, if $\mathcal{V}_{y}$ denotes the set of initial
elements of these $\bar{S}-n-$blocks,%
\begin{equation}
\left\vert \frac{n\left\vert \mathcal{V}_{y}\right\vert }{M}-\left( \bar{\mu}%
\times \lambda \right) \left( \bar{R}\right) \right\vert <\zeta
\label{cover M-name}
\end{equation}%
\begin{equation}
\left\Vert dist_{\mathcal{V}_{y}}\bigvee_{i\in \left[ n\right] }\bar{S}%
^{-i}\left( \bar{P}\vee \bar{c}\right) ,dist_{X\times G}\bigvee_{i\in \left[
n\right] }S^{-i}\left( P\vee c\right) \right\Vert _{\mathcal{M}}<\delta
\label{good n-dist}
\end{equation}%
and for all $y^{\prime }\in B$%
\begin{equation}
\left\Vert dist_{\mathcal{V}_{y}}\bigvee_{i\in \left[ n\right] }\bar{S}%
^{-i}\left( \bar{P}\vee \bar{c}\vee \boldsymbol{1}_{A_{1}}\right) ,dist_{%
\mathcal{V}_{y^{\prime }}}\bigvee_{i\in \left[ n\right] }\bar{S}^{-i}\left( 
\bar{P}\vee \bar{c}\vee \boldsymbol{1}_{A_{1}}\right) \right\Vert _{\mathcal{%
M}}<\zeta  \label{same n-dist}
\end{equation}%
and for all $y,y^{\prime }\in B$%
\begin{equation}
\left\vert \mathcal{V}_{y}\right\vert =\left\vert \mathcal{V}_{y^{\prime
}}\right\vert .  \label{same number}
\end{equation}%
To construct $R$ let $\bar{B}$ denote the base of $\bar{R}$ and $\bar{L}$
its height, and let $\bar{k}$ denote the variable exponent that gives $\bar{S%
}=\bar{S}_{0}^{\bar{k}}.$ For each $M\in \mathbb{N}$ and $y\in \bar{X}\times
G$ we let $\mathcal{R}_{y}=\left\{ y^{\prime }\in \bar{S}_{0}^{\left[ M%
\right] }\left( y\right) \mid \bar{S}^{\mathbb{Z}}\left( y^{\prime }\right)
\subset \bar{S}_{0}^{\left[ M\right] }\left( y\right) \right\} $ and $%
\mathcal{Z}_{y}=\bar{B}\cap \mathcal{R}_{y}.$ Consider the set $\tilde{Y}%
\subset \bar{X}\times G$ consisting of those $y$ such that%
\begin{equation*}
\left\vert \frac{\left\vert \mathcal{R}_{y}\right\vert }{M}-\left( \bar{\mu}%
\times \lambda \right) \left( \bar{R}\right) \right\vert <\zeta
\end{equation*}%
and%
\begin{equation*}
\left\Vert dist_{\mathcal{Z}_{y}}\bigvee_{i\in \left[ \bar{L}\right] }\bar{S}%
^{-i}\left( \bar{P}\vee \bar{c}\vee \boldsymbol{1}_{A_{1}}\right) ,dist_{%
\bar{B}}\bigvee_{i\in \left[ \bar{L}\right] }\bar{S}^{-i}\left( \bar{P}\vee 
\bar{c}\vee \boldsymbol{1}_{A_{1}}\right) \right\Vert _{\mathcal{M}}<\frac{%
\zeta }{100}.
\end{equation*}%
By the ergodic theorem, and using the fact that $\bar{k}$ is bounded, we
know that if $M$ is sufficiently large, then $\left( \bar{\mu}\times \lambda
\right) \left( \tilde{Y}\right) >1-\frac{\zeta }{100}.$ Suppose $y\in \tilde{%
Y}.$ Then for each $y^{\prime }\in \mathcal{Z}_{y}$ the set $\bar{S}^{\left[
L\right] }\left( y^{\prime }\right) $ is divided into ladder blocks (of
length $n$), and if $\mathcal{V}_{y}$ denotes the initial elements of all
these blocks, conditions $\left( \ref{cover M-name}\right) ,\left( \ref{good
n-dist}\right) ,$ and $\left( \ref{same n-dist}\right) $ are satisfied.

The set $\tilde{Y}$ is $\mathcal{\bar{A}}_{0}-$measurable, so lemma \ref%
{strR} applied to $\bar{T}_{0}$ gives a Rokhlin tower $R$ for $\bar{S}_{0}$
with base $B\subset \tilde{Y}$ that satisfies the desired conditions. By
deleting some members of the sets $\mathcal{V}_{y},$ we can also arrange
that $\left( \ref{same number}\right) $ holds.

In the construction of the tower $R$ we also choose $M$ so that any
distribution match as in lemma \ref{dist match 2} between $dist_{\mathcal{V}%
_{y}}\left( \bigvee_{i\in \left[ n\right] }\bar{S}^{-i}\left( \bar{P}\vee 
\bar{c}\vee \boldsymbol{1}_{A_{1}}\right) \right) $ for a point $y\in B$ and 
$dist_{\mathcal{J}_{k}}F_{n}$ will be at least $\frac{1}{\zeta }-$to-one. We
will refer to the $\bar{S}-n-$blocks with initial points in the sets $%
\mathcal{V}_{y}$ as \emph{useful }blocks.

Second, we construct a much longer Rokhlin tower $R^{\prime }$ with height $%
M^{\prime }$ and base $B^{\prime }\in \mathcal{\bar{A}}_{0}\ $such that $%
\left( \bar{\mu}\times \lambda \right) \left( R^{\prime }\right) >1-\zeta $
and so that for each $y\in B^{\prime },$ $\bar{S}_{0}^{\left[ M^{\prime }%
\right] }y$ is $\left( 1-\zeta \right) -$covered by (necessarily disjoint)
sets of the form $\bar{S}_{0}^{\left[ M\right] }y^{\prime }$ where $%
y^{\prime }\in B.$ The height $M^{\prime }$ will be chosen subject to some
additional requirements, which will be described below. We divide $R^{\prime
}$ into columns whose levels are pure with respect to $\bar{P},\boldsymbol{1}%
_{A_{1}},$ the variable exponent $\bar{k},$ the levels of the speedup tower $%
\bar{R}$ for $\bar{S},$ and the levels of $R.$ We further refine the columns
of $R^{\prime }$ so that in each level $L$ of a column, the values of the
skewing function $\bar{\sigma}_{0}$ are within $\zeta ^{\prime }$ of being
constant, where $\zeta ^{\prime }$ is so small that if $O_{\zeta ^{\prime
}}(id_{G})$ denotes the $\zeta ^{\prime }-$ball in $G$ centered at $id_{G},$
then 
\begin{equation}
\left( O_{\zeta ^{\prime }}(id_{G})\right) ^{M^{\prime }}\subset O_{\zeta
}(id_{G}).  \label{small zeta prime}
\end{equation}

Fix a column $C^{\prime }$ in $R^{\prime }$ and fix $y^{\prime }\in
B^{\prime }\cap C^{\prime }.$ If $j\in \left[ M^{\prime }-M\right] $ and $%
\bar{S}_{0}^{j}y^{\prime }\in B$ then we refer to $\bar{S}_{0}^{\left[ M%
\right] }\left( \bar{S}_{0}^{j}y^{\prime }\right) $ as a \emph{window} (for $%
y^{\prime }).$ (This language anticipates the construction of cycles below).
We let $\left\{ W_{i}\right\} _{i=0}^{w-1}$ be the set of windows for $%
y^{\prime }$, indexed in the order imposed by $\bar{S}_{0}.$ We recall that
each such window is covered, up to a fraction $\left( \bar{\mu}\times
\lambda \right) \left( \bar{R}\right) \pm \zeta ,$ by the useful blocks
associated with $\bar{S}_{0}^{j}\left( y\right) $. We note that the useful
blocks in each $W_{i}$ occupy no more than a $\left( 1-\frac{\delta }{2}%
+\zeta \right) -$fraction of $W_{i}$. We let $\mathcal{V}_{i}$ denote the
set of initial points of these useful blocks.

For each $i>0$ we fix a bijection $\phi _{i}:\mathcal{V}_{i}\rightarrow 
\mathcal{V}_{0}$ that is a $\zeta -$distribution match between $dist_{%
\mathcal{V}_{i}}\bigvee_{i\in \left[ n\right] }\bar{S}^{-i}\left( \bar{P}%
\vee \bar{c}\vee \boldsymbol{1}_{A_{1}}\right) $ and $dist_{\mathcal{V}%
_{0}}\bigvee_{i\in \left[ n\right] }\bar{S}^{-i}\left( \bar{P}\vee \bar{c}%
\vee \boldsymbol{1}_{A_{1}}\right) .$ (As before, we must have originally
chosen a number $\tilde{\zeta}$ as in lemma \ref{dist match 1} instead of $%
\zeta ,$ so that such $\zeta -$distribution matches are available here).

Let $\theta :\mathcal{V}_{0}\rightarrow \mathcal{J}_{0}$ be a $\delta -$%
distribution match between $dist_{\mathcal{V}_{0}}\bigvee_{i\in \left[ n%
\right] }\bar{S}^{-i}\left( \bar{P}\vee \bar{c}\right) $ and $dist_{\mathcal{%
J}_{0}}F_{n}$. (Again, we must have chosen a number $\tilde{\delta}=\tilde{%
\delta}\left( \delta ,n\right) $ as in lemma \ref{dist match 2} instead of $%
\delta $ in condition \ref{delta dist match}, so that such a $\delta -$%
distribution match is available here).

The desired partial speedup $\bar{S}_{1}=\bar{S}_{0}^{\bar{k}_{1}}$ of $\bar{%
S}_{0}$ will be defined first on $\bar{S}_{0}^{\left[ M^{\prime }\right]
}\left( y^{\prime }\right) $ by concatenating blocks taken from successive
windows in $\bar{S}_{0}^{\left[ M^{\prime }\right] }\left( y^{\prime
}\right) .$ These blocks will be selected so that the orbit they form will
be well matched to the model name $F.$ The definition of $\bar{S}_{1}$ will
then be extended to the rest of $C^{\prime }$ by making $\bar{k}_{1}$
constant on the levels of $C^{\prime }.$ All other columns will be treated
in a similar way. The details will be presented in a sequence of steps.

Step 1. Let $Q_{0}$ denote the partition $Q\circ \theta $ on $\mathcal{V}%
_{0}.$ Let $\mathcal{R}_{0}$ denote the partition of $\mathcal{V}_{0}$ by $%
\bigvee_{i\in \left[ n\right] }\bar{S}^{-i}\left( \boldsymbol{1}%
_{A_{1}}\right) .$ We make a modification $\widetilde{\mathcal{R}}_{0}$ of $%
\mathcal{R}_{0}:$

Let $\nu $ denote the normalized counting measure on $\mathcal{V}_{0}$. For
each atom $q$ of $Q_{0},$ let $\nu _{q}$ denote $\nu $ conditioned on $q,$
and let $\mathcal{R}_{0}^{q}$ denote the restriction of $\mathcal{R}_{0}$ to 
$q.$ We will construct a new partition $\widetilde{\mathcal{R}}_{0}^{q}$ on $%
q,$ such that $\left\Vert dist_{q}\mathcal{R}_{0}^{q},dist_{q}\widetilde{%
\mathcal{R}}_{0}^{q}\right\Vert _{\mathcal{M}}<\frac{\varepsilon ^{\prime }}{%
50},$ and such that for every atom $\widetilde{r}$ of $\widetilde{\mathcal{R}%
}_{0}^{q}$, $\nu _{q}\left( \widetilde{r}\right) >\delta .$

Let $U_{q}=\tbigcup \left\{ a\in \mathcal{R}_{0}^{q}:\nu _{q}\left( a\right)
<\delta \right\} .$ By the choice of $\delta ,$ $\nu _{q}\left( U\right) <%
\frac{\varepsilon ^{\prime }}{100}.$ If $\nu _{q}\left( U_{q}\right) \geq
\delta $ we regard $U_{q}$ as a single atom of $\widetilde{\mathcal{R}}%
_{0}^{q},$ and we let $\widetilde{\mathcal{R}}_{0}^{q}$ coincide with $%
\mathcal{R}_{0}^{q}$ on the rest of its atoms. If $\nu _{q}\left(
U_{q}\right) <\delta $ we choose an atom $r$ of $\mathcal{R}_{0}^{q}$ such
that $\nu _{q}\left( r\right) >2^{-n}$ and a subset $r^{\prime }\subset r$
with $\nu _{q}\left( r^{\prime }\right) \in \left( 2^{-\left( n+2\right)
},2^{-\left( n+1\right) }\right) .$ We regard $r^{\prime }\cup U_{q}$ and $%
r\backslash r^{\prime }$ as single atoms of $\widetilde{\mathcal{R}}%
_{0}^{q}, $ and we let $\widetilde{\mathcal{R}}_{0}^{q}$ coincide with $%
\mathcal{R}_{0}^{q}$ on the rest of its atoms. (The $\nu _{q}-$measure of a
singleton is small enough to guarantee the existence of $r^{\prime }$).

The partition $\widetilde{\mathcal{R}}_{0}^{q}$ has the desired properties.
In particular, $\widetilde{\mathcal{R}}_{0}^{q}$ coincides with $\mathcal{R}%
_{0}^{q}$ except on the atom $r_{U_{q}}$ of $\widetilde{\mathcal{R}}_{0}^{q}$
that contains $U_{q}.$

We let $\widetilde{\mathcal{R}}_{0}$ denote the partition of $\mathcal{V}%
_{0} $ whose restriction to each atom $q$ is $\widetilde{\mathcal{R}}%
_{0}^{q}.$ Thus $\widetilde{\mathcal{R}}_{0}$ coincides with $\mathcal{R}%
_{0} $ except on the union of the sets $r_{U_{q}}.$ We will refer to the
atoms $r_{U_{q}}$ as \textquotedblleft miscellaneous\textquotedblright\
atoms.

Step 2. To each real $n-$block of $F$ we assign an atom of $Q_{0}\vee 
\widetilde{\mathcal{R}}_{0}:$ For each $q\in Q,$ let $\mathcal{J}%
_{0,q}=\left\{ j\in \mathcal{J}_{0}\mid Q\left( F_{n}\left( j\right) \right)
=q\right\} .$ Let $f_{q,0}:\mathcal{J}_{0,q}\rightarrow \widetilde{\mathcal{R%
}}_{0}^{q}$ be a function with statistical distribution within $\zeta $ of $%
dist_{q}\widetilde{\mathcal{R}}_{0}^{q}.$ Lemma \ref{sampling}\ guarantees
that such a function exists. Let $f_{0}:\mathcal{J}_{0}\rightarrow Q_{0}\vee 
\widetilde{\mathcal{R}}_{0}$ be the common extension\ of the $f_{q,0}.$That
is, $f_{0}=\cup _{q}f_{q,0}.$ For each $k>0$, we set $f_{k}=f\circ \psi _{k}:%
\mathcal{J}_{k}\rightarrow Q_{0}\vee \widetilde{\mathcal{R}}_{0},$ and we
set $f=\cup _{k}f_{k}$.

Step 3. We prepare samples from each $\mathcal{V}_{i}:$ Let $\left\{ \tau
_{0,t}:\cup _{k}\mathcal{J}_{k}\rightarrow \mathcal{V}_{0}\right\} _{t}$ be
injections with disjoint ranges so that for all $t$ and all $i\in \cup _{k}%
\mathcal{J}_{k},$ $Q_{0}\vee \widetilde{\mathcal{R}}_{0}^{q}\left( \tau
_{0,t}\left( i\right) \right) =f\left( i\right) .$ To see that a large
collection of such \textquotedblleft samples\textquotedblright\ $\tau _{0,t}$
is available, let $\delta ^{\prime \prime }=\frac{1}{2}\min_{q\in Q}\left\{
\mu \times \lambda \left( q\right) \right\} $ and $\delta ^{\prime }=\delta
^{\prime \prime }\delta .$ If $\zeta $ was chosen so that $\zeta <\min
\left\{ \frac{\delta ^{\prime \prime }}{100},\frac{\delta _{1}\delta
^{\prime }}{200}\right\} ,$ then by property $\left( c.\right) $ in our
choice of $F,$ it will be the case that $\delta ^{\prime \prime }<\min_{q\in
Q_{0}}\nu \left( q\right) $, and so $\delta ^{\prime }<\min_{a\in Q_{0}\vee 
\mathcal{\tilde{R}}_{0}}\nu \left( a\right) $. Let $N^{\prime }$ be the
number given by lemma \ref{exhaustion} with respect to $\delta ^{\prime },%
\frac{\delta _{1}}{100},\zeta $ and $K^{\prime }=\left\vert \mathcal{\cup }%
_{k}\mathcal{J}_{k}\right\vert .$ Then the tower $R$ could have been chosen
so that the number of useful blocks in each window exceeds $N^{\prime }.$
Applying lemma \ref{exhaustion} we obtain a set of samples $\left\{ \tau
_{0,t}\right\} _{t}$ such that $\cup _{t}\tau _{0,t}\left( \cup _{k}\mathcal{%
J}_{k}\right) $ covers all but a $\frac{\delta _{1}}{100}-$fraction of $%
\mathcal{V}_{0}$.

For each $s>0,$ we set $\tau _{s,t}=\phi _{s}^{-1}\tau _{0,t}:\cup _{k}%
\mathcal{J}_{k}\rightarrow \mathcal{V}_{s},$ which provides corresponding
samples of $\mathcal{V}_{s}.$

Step 4. We construct cycles in $\left[ M^{\prime }\right] :$ Identifying $%
\bar{S}_{0}^{\left[ M^{\prime }\right] }y^{\prime }$ with $\left[ M^{\prime }%
\right] $ via $e:\bar{S}_{0}^{i}y^{\prime }\mapsto i,$ we have a system $%
\left\{ \tilde{W}_{s}\right\} $ of $w$ windows of length $M$ in $\left[
M^{\prime }\right] .$ Let $p=\left\vert \mathcal{J}^{\prime }\right\vert $
be the number of $n-$blocks in $F$. We also identify $\mathcal{J}^{\prime }$
with $\left[ p\right] $ (without changing notation)$,$ and for each $t,$ we
construct a cycle $\Gamma _{t}$ of $p-$sequences in $\left[ M^{\prime }%
\right] $ as follows. Let $\tau _{s,t}^{\prime }\left( i\right) $ denote the
height of the point $\tau _{s,t}\left( i\right) $ above the base of $W_{s}.$
We (partially) define $\Gamma _{t}=\left\{ g_{j}^{l,t}\right\} _{l,j}$ by
setting, 
\begin{equation*}
g_{j}^{l,t}\left( i\right) =\tilde{W}_{jp+l+i}\left( \tau
_{jp+l+i,t}^{\prime }\left( i\right) \right) .
\end{equation*}

For the moment, we leave the functions $g_{j}^{l,t}$ of $\Gamma _{t}$
undefined on $\mathcal{J}^{\prime }\backslash \cup _{k}\mathcal{J}_{k}.$ We
will refer to these $4-$tuples $\left( t,l,j,i\right) $ as the \emph{real }$%
4-$tuples (associated with $C^{\prime })$. We have that $\left\{
g_{j}^{l,t}\left( i\right) \mid \left( t,l,j,i\right) \text{ is a real }4-%
\text{tuple}\right\} $ covers all but a $\frac{\delta _{1}}{100}-$fraction
of $e\left( \cup _{s}\mathcal{V}_{s}\right) .$

Step 5. We extend the domains of the\ functions that make up the cycles $%
\Gamma _{t}$ to all of $\left[ p\right] :$ For each window $W_{s}$ and each $%
z\in \mathcal{V}_{s}$ let $b\left( z\right) $ denote the useful block
beginning at $z.$ The set $U_{s}=W_{s}\backslash \left[ \cup
_{t,l,j,i}b\left( e^{-1}g_{j}^{l,t}\left( i\right) \right) \right] $ covers
at least a $\left( \frac{\delta }{2}-\zeta \right) -$fraction of $W_{s}$.
For each $4-$tuple $\left( t,l,j,i\right) $ where $\left( t,l,j\right) $\ is
the initial triple of a real $4-$tuple and $i$ is initial position of a
pseudo $n-$block of length $\tilde{n}$, we choose a subset $\gamma \left(
\left( t,l,j,i\right) \right) \subset U_{s}$ of size $\tilde{n}.$ The sets $%
\gamma \left( \left( t,l,j,i\right) \right) $ are chosen to be pairwise
disjoint. Since the number of pseudo $n-$blocks in $F$ is at most a $\zeta -$%
fraction of the number of real $n-$blocks, and $\zeta $ is much less than $%
\delta ,$ such disjoint sets are available. We define $g_{j}^{l,t}\left(
i\right) $ to be the first element of $e\left( \gamma \left( \left(
t,l,j,i\right) \right) \right) .$

Step 6. We begin to define $\bar{S}_{1}:$ For each stage $g_{j}^{l,t}$ in
one of the constructed cycles let $\tilde{g}_{j}^{l,t}=e^{-1}g_{j}^{l,t}.$
Each $i^{\prime }\in \mathcal{J}^{\prime }$ is the initial position of an $n$
block (real or pseudo) of length $\tilde{n},$ and $\tilde{g}_{j}^{l,t}\left(
i^{\prime }\right) $ is the initial point of a block of the same length $%
\tilde{n}$ in a single window of $\bar{S}_{0}^{\left[ M^{\prime }\right]
}\left( y^{\prime }\right) .$ That block is either the useful block
beginning at $\tilde{g}_{j}^{l,t}\left( i^{\prime }\right) $ or a block of
the form $\gamma \left( \left( t,l,j,i\right) \right) $ as in step $5$. The
blocks with initial points $\tilde{g}_{j}^{l,t}\left( i^{\prime }\right) $
lie in successive windows, and we extend the map $\tilde{g}_{j}^{l,t}$ to a
map $\hat{g}_{j}^{l,t}:\left[ \left\vert F\right\vert \right] \rightarrow 
\bar{S}_{0}^{\left[ M^{\prime }\right] }\left( y^{\prime }\right) $ by
concatenating these blocks. It follows that $e\hat{g}_{j}^{l,t}:\left[
\left\vert F\right\vert \right] \rightarrow \left[ M^{\prime }\right] $ is
increasing. We regard the image of $\hat{g}_{j}^{l,t}$ as an orbit of the
new partial speedup $\bar{S}_{1}.$ Writing $\bar{S}_{1}=\bar{S}_{0}^{\bar{k}%
_{1}}$ along this orbit, we extend the definition of $\bar{S}_{1}$ to the
union of the levels of $C^{\prime }$ that contain this orbit, by requiring
that $\bar{k}_{1}$ be constant on each of these levels. We refer to this
union of levels as a speedup column.

Step 7. We define the new partition $\bar{P}$ and the adjustment function $%
\bar{\alpha}$ on each of the speedup columns just created: Our model name $F$
has the form $F=\left\{ \left( p_{s},g_{s}^{\prime }\right) \right\} _{s\in %
\left[ \left\vert F\right\vert \right] }\in \left( P\times G\right) ^{\left[
\left\vert F\right\vert \right] }.$ In this speedup column we have the
particular speedup orbit, taken from the orbit of $y^{\prime },$ which is a
sequence $\left\{ \left( \bar{x}_{s},g_{s}\right) \right\} _{s\in \left[
\left\vert F\right\vert \right] }$ of points in $\bar{X}\times G.$ We define 
$\bar{P}_{1}$ on the points $\left\{ \bar{x}_{s}\right\} _{s\in \left[
\left\vert F\right\vert \right] }$ by setting, for each $s,$ $\bar{P}%
_{1}\left( \bar{x}_{s}\right) =p_{s}$, and we make $\bar{P}_{1}$ constant on
the column level that contains $\bar{x}_{s}.$ (We can view $\bar{P}$ as
either a partition of $\bar{X}$ or of $\bar{X}\times G$ without confusion).
We define $\bar{\alpha}$ on the points $\left\{ \bar{x}_{s}\right\} _{s\in %
\left[ \left\vert F\right\vert \right] }$ by requiring, for each $s,$ $\bar{%
\alpha}\left( \bar{x}_{s}\right) g_{s}=g_{s}^{\prime }.$ We extend $\bar{%
\alpha}$ to the whole speedup column we have constructed as follows. Let $L$
denote the base of this column. For each point $\bar{z}\in \bar{X}$ such
that $\left\{ \bar{z}\right\} \times G\subset L,$ we consider the orbit of $%
\left( \bar{z},g_{0}\right) $ under the speedup $\bar{S}_{1}$ that we have
defined (so far just on this column). The $G-$ coordinates of the points on
this orbit are rotations of $g_{0}$ by successive products of the skewing
function $\bar{\sigma}$ associated with $\bar{S}_{1}.$ Namely, 
\begin{equation*}
c\left( \bar{S}_{1}^{t}\left( \bar{z},g_{0}\right) \right) =\left( \bar{T}%
_{1}\left( \bar{z}\right) ,\bar{\sigma}^{\left( t\right) }\left( \bar{z}%
\right) g_{0}\right)
\end{equation*}%
where $\bar{\sigma}^{\left( t\right) }\left( \bar{z}\right) =\bar{\sigma}%
\left( \bar{T}_{1}^{t}\left( \bar{z}\right) \right) ...\bar{\sigma}\left( 
\bar{T}_{1}^{2}\left( \bar{z}\right) \right) \bar{\sigma}\left( \bar{T}%
_{1}\left( \bar{z}\right) \right) \bar{\sigma}\left( \bar{z}\right) .$ But
for each $j\in \left[ t\right] ,$ 
\begin{equation*}
\rho \left( \bar{\sigma}\left( \bar{T}_{1}^{j}\left( \bar{z}\right) \right) ,%
\bar{\sigma}\left( \bar{T}_{1}^{j}\left( \bar{x}_{0}\right) \right) \right)
<\zeta ^{\prime }
\end{equation*}%
and $t\leq M^{\prime },$ so by the choice of $\zeta ^{\prime }$ (see (\ref%
{small zeta prime})), we get $\rho \left( \bar{\sigma}^{\left( t\right)
}\left( \bar{z}\right) ,\bar{\sigma}^{\left( t\right) }\left( \bar{x}%
_{0}\right) \right) <\zeta .$ We define $\bar{\alpha}$ on the $\bar{T}_{1}$
orbit of $\bar{z}$ by requiring that 
\begin{equation*}
\bar{\alpha}\left( \bar{T}_{1}^{t}\left( \bar{z}\right) \right) \bar{\sigma}%
^{\left( t\right) }\left( \bar{z}\right) g_{0}=g_{t}^{\prime }
\end{equation*}%
In other words, the \textquotedblleft adjusted\textquotedblright\ $G-$%
coordinates of the points on this orbit are identical to the $G-$coordinates
of the corresponding terms in the model name $F.$

The useful blocks fill at least a $\left( 1-\delta \right) -$fraction of
each window, and we used a $\left( 1-\frac{\delta _{1}}{100}\right) -$
fraction of the useful blocks in each window. We may assume that $\delta
_{1} $ was chosen so that $\left( 1-\frac{\delta _{1}}{100}\right) \left(
1-\delta \right) >\left( 1-2\delta \right) .$ Since the windows $W_{i}$
(even those not within $p$ windows of the top and bottom) occupy at least a $%
\left( 1-\zeta \right) -$fraction of $C^{\prime },$ it follows (providing $%
\zeta $ is chosen small enough compared to $\delta _{1}$) that the speedup
columns for $\bar{S}_{1}$ constructed thus far cover at least a $\left(
1-3\delta \right) -$fraction of $C^{\prime }$.

Step 8. We repeat the preceding construction on each of the other columns of 
$R^{\prime }.$ Since $\bar{\mu}\times \lambda \left( R^{\prime }\right)
>1-\zeta ,$ this yields a speedup tower $\bar{R}_{0}$ for $\bar{S}_{1}$ with 
$\bar{\mu}\times \lambda \left( \bar{R}_{0}\right) >1-3\delta -\zeta .$

Step 9. We extend the definition of $\bar{S}_{1},\bar{P}_{1}$ and $\bar{%
\alpha}$ to more of $R^{\prime }:$ In each column of $R^{\prime },$ we
assemble the remaining levels in an order preserving way to form columns of
height $\left\vert F\right\vert ,$ making as many as the number of remaining
levels allows. This results in a speedup tower $\bar{R}_{1}$ for $\bar{S}%
_{1} $ whose base $B_{1}$ contains $B_{0},$ and we have $\bar{\mu}\times
\lambda \left( \bar{R}_{1}\right) >1-2\zeta \ $and $\bar{\mu}\times \lambda
\left( dom\left( \bar{S}_{1}\right) \right) >1-3\zeta >1-\delta _{1}$. On
each of the columns we just constructed, we define $\bar{P}_{1}$ and $\bar{%
\alpha}$ as before to match the name $F.$ We note that the partition $\bar{P}%
_{1}$ and the function $\bar{\alpha}$ have been defined on $\bar{R}_{1}$ so
that for every point $z$ in the base $B_{1},$ whose $G-$coordinate equals
the $G-$coordinate of the first term in $F$, 
\begin{equation*}
\bigvee_{i\in \left[ \left\vert F\right\vert \right] }\bar{S}_{1}^{-i}(\bar{P%
}_{1}\vee \bar{\alpha}c)\left( z\right) =F.
\end{equation*}%
We also extend the definition of $\bar{P}_{1}$ and $\bar{\alpha}_{1}$ to the
complement of $\bar{R}_{1}$, by including the complement of $\bar{R}_{1}$ in
a single atom of $\bar{P}_{1}$ and by setting $\bar{\alpha}_{1}$ equal to $%
id_{G}$ on the complement of $\bar{R}_{1}.$

We now verify that the conclusions of the lemma hold.

To establish (\ref{n_1,d_1 regular}), the only property requiring
explanation is the fourth of the definition of $\left( n_{1},\delta
_{1}\right) -$regularity. But for all $z$ as in step 9, properties (a.) and
(b.) in the formation of $F$ imply that, with $\mathcal{W}_{z}=\bar{S}_{1}^{%
\left[ \frac{\left\vert F\right\vert }{n_{1}}\right] n_{1}}\left( z\right) ,$%
\begin{equation*}
\left\Vert dist_{\mathcal{W}_{z}}\bigvee_{i\in \left[ n_{1}\right] }\bar{S}%
_{1}^{-i}\left( \bar{P}_{1}\vee \bar{\alpha}c\right) ,dist_{Dom\left( \bar{S}%
_{1}^{n_{1}}\right) }\bigvee_{i\in \left[ n_{1}\right] }\bar{S}%
_{1}^{-i}\left( \bar{P}\vee \bar{\alpha}c\right) \right\Vert _{\mathcal{M}}<%
\frac{\delta _{1}}{50}.
\end{equation*}%
And so for each $h\in G,$%
\begin{equation*}
\left\Vert dist_{\mathcal{W}_{zh}}\bigvee_{i\in \left[ n_{1}\right] }\bar{S}%
_{1}^{-i}\left( \bar{P}_{1}\vee \bar{\alpha}c\right) ,dist_{Dom\left( \bar{S}%
_{1}^{n_{1}}\right) }\bigvee_{i\in \left[ n_{1}\right] }\bar{S}%
_{1}^{-i}\left( \bar{P}\vee \bar{\alpha}c\right) \right\Vert _{\mathcal{M}}
\end{equation*}%
\begin{equation*}
=\left\Vert dist_{\mathcal{W}_{z}}\bigvee_{i\in \left[ n_{1}\right] }\bar{S}%
_{1}^{-i}\left( \bar{P}_{1}\vee \bar{\alpha}ch\right) ,dist_{Dom\left( \bar{S%
}_{1}^{n_{1}}\right) }\bigvee_{i\in \left[ n_{1}\right] }\bar{S}%
_{1}^{-i}\left( \bar{P}\vee \bar{\alpha}ch\right) \right\Vert _{\mathcal{M}}
\end{equation*}%
\begin{equation*}
=\left\Vert dist_{\mathcal{W}_{z}}\bigvee_{i\in \left[ n_{1}\right] }\bar{S}%
_{1}^{-i}\left( \bar{P}_{1}\vee \bar{\alpha}c\right) ,dist_{Dom\left( \bar{S}%
_{1}^{n_{1}}\right) }\bigvee_{i\in \left[ n_{1}\right] }\bar{S}%
_{1}^{-i}\left( \bar{P}\vee \bar{\alpha}c\right) \right\Vert _{\mathcal{M}}<%
\frac{\delta _{1}}{50}
\end{equation*}

Next we establish (\ref{close P}) and (\ref{small alpha}). Fix a column $%
C^{\prime }$ of $R^{\prime }.$ We will show that for most real $4-$tuples $%
\left( t,l,j,i\right) $ associated with $C^{\prime },$ 
\begin{equation}
\rho ^{\prime }\left[ F_{n}\left( i\right) ,\bigvee_{u\in \left[ n\right] }%
\bar{S}^{-u}\left( \bar{P}\vee \bar{c}\right) \left( \tilde{g}%
_{j}^{l,t}\left( i\right) \right) \right] <2\delta  \label{e close n names}
\end{equation}%
For brevity, we let $\psi =\cup _{k}\psi _{k}$.

Condition (\ref{e close n names}) is met if the following conditions on $%
\left( t,l,j,i\right) $ hold:

(a) $\rho ^{\prime }\left[ F_{n}\left( i\right) ,F_{n}\left( \psi \left(
i\right) \right) \right] <\zeta $,

(b) $\rho ^{\prime }\left[ F_{n}\left( \psi \left( i\right) \right)
,\bigvee_{u\in \left[ n\right] }\bar{S}^{-u}\left( \bar{P}\vee \bar{c}%
\right) \left( \tau _{0,t}\psi \left( i\right) \right) \right] <\delta +%
\frac{\delta }{100},$ and

(c) $\rho ^{\prime }\left[ \bigvee_{u\in \left[ n\right] }\bar{S}^{-u}\left( 
\bar{P}\vee \bar{c}\right) \left( \tau _{0,t}\psi \left( i\right) \right)
,\bigvee_{u\in \left[ n\right] }\bar{S}^{-u}\left( \bar{P}\vee \bar{c}%
\right) \left( \tau _{jp+l+i,t}\left( i\right) \right) \right] <\zeta .$

(Note that $\tau _{jp+l+i,t}\left( i\right) =\hat{g}_{j}^{l,t}\left(
i\right) $.)

Each of these conditions holds for a fraction of the set of real $4-$tuples
which is greater than

(a) $\left( 1-\zeta \right) $ (b)$\left( 1-\frac{\delta }{1-\frac{\delta _{1}%
}{100}}\right) $ (c) $\left( 1-2\zeta \right) .$ Concerning (b), we know
that for all $y$ in a set $\mathcal{V}_{0}^{\prime }\subset \mathcal{V}_{0}$
with $\frac{\left\vert \mathcal{V}_{0}^{\prime }\right\vert }{\left\vert 
\mathcal{V}_{0}\right\vert }>\left( 1-\delta \right) $ we have $\rho
^{\prime }\left[ \bigvee_{u\in \left[ n\right] }\bar{S}^{-u}\left( \bar{P}%
\vee \bar{c}\right) \left( y\right) ,F_{n}\left( \theta y\right) \right]
<\delta ,$ so if $\tau _{0,t}\left( \psi \left( i\right) \right) =y\in 
\mathcal{V}_{0}^{\prime }$ then $\rho ^{\prime }\left[ F_{n}\left( \psi
\left( i\right) \right) ,\bigvee_{u\in \left[ n\right] }\bar{S}^{-u}\left( 
\bar{P}\vee \bar{c}\right) \left( \tau _{0,t}\psi \left( i\right) \right) %
\right] \leq $ $\rho ^{\prime }\left[ F_{n}\left( \psi \left( i\right)
\right) ,F_{n}\left( \theta y\right) \right] $ $+\rho ^{\prime }\left[
F_{n}\left( \theta y\right) ,\bigvee_{u\in \left[ n\right] }\bar{S}%
^{-u}\left( \bar{P}\vee \bar{c}\right) \left( \tau _{0,t}\psi \left(
i\right) \right) \right] <\frac{\delta }{100}+\delta .$ Here the fact that $%
Q\left( F_{n}\left( \psi \left( i\right) \right) \right) =Q\left(
F_{n}\left( \theta y\right) \right) $ implies $\rho ^{\prime }\left[
F_{n}\left( \psi \left( i\right) \right) ,F_{n}\left( \theta y\right) \right]
<\frac{\delta }{100}.$ Since $\left\{ \tau _{0,t}\left( \psi \left( i\right)
\right) \right\} _{t,i}$ covers a $\left( 1-\frac{\delta _{1}}{100}\right) -$%
fraction of $\mathcal{V}_{0},$ we get condition (b).

Thus, (if $\delta _{1}$ and $\zeta $ are sufficiently small with respect to $%
\delta $) for a set of $4-$tuples of density greater than $\left( 1-2\delta
\right) $, we have condition (\ref{e close n names}).

But the blocks that arise from these $4-$tuples (that is the useful blocks
with initial points $\tilde{g}_{j}^{l,t}\left( i\right) )$ occupy at least a 
$\left( 1-\frac{\delta _{1}}{100}\right) -$fraction of the set of all useful
blocks in $S^{\left[ M^{\prime }\right] }y^{\prime },$ and the useful blocks
in $C^{\prime }$ occupy at least a $\left( 1-\delta -\zeta \right) -$%
fraction of the points in the windows of $S^{\left[ M^{\prime }\right]
}y^{\prime },$ and the windows occupy at least a $\left( 1-\zeta \right) -$%
fraction of $S^{\left[ M^{\prime }\right] }y^{\prime }.$ So (if $\delta _{1}$
and $\zeta $ are sufficiently small) the set of column blocks that arise
from these $4-$tuples occupy at least a $\left( 1-2\delta \right) -$fraction
of $C^{\prime }.$ Since $C^{\prime }$ is an arbitrary column and $\bar{\mu}%
\times \lambda \left( R^{\prime }\right) >1-\zeta ,$ we obtain (\ref{close P}%
) and (\ref{small alpha}).

To establish (\ref{small break}) we note that $\bar{\mu}\times \lambda
\left( R\backslash R^{\prime }\right) <\zeta $ and if $E$ denotes the set of
points in $R\cap R^{\prime }$ whose ladder block is not used in the
construction of $\bar{R}_{0}$, then $\bar{\mu}\times \lambda \left( E\right)
<\frac{\delta _{1}}{100}.$ Since all the ladder blocks that were used in the
construction of $\bar{R}_{0}$ were unbroken, condition (\ref{small break})
is obtained.

To establish (\ref{dist match}) we argue as in the case of condition (\ref%
{n_1,d_1 regular}). For all points $z\in B_{1}$ whose $G-$coordinate is the
same as that of the initial term in $F,$%
\begin{equation*}
\left\Vert dist_{\bar{S}^{\left[ \left\vert F\right\vert -n_{1}+1\right]
}z}\bigvee_{i\in \left[ n_{1}\right] }\bar{S}_{1}^{-i}(\bar{P}_{1}\vee \bar{%
\alpha}c)\left( z\right) ,dist_{X\times G}\bigvee_{i\in \left[ n_{1}\right]
}S^{-i}\left( P\vee c\right) \right\Vert _{\mathcal{M}}<\frac{\delta _{1}}{%
100}
\end{equation*}%
and for each $h\in G,$%
\begin{equation*}
\left\Vert dist_{\bar{S}^{\left[ \left\vert F\right\vert -n_{1}+1\right]
}zh}\bigvee_{i\in \left[ n_{1}\right] }\bar{S}_{1}^{-i}(\bar{P}_{1}\vee \bar{%
\alpha}c)\left( zh\right) ,dist_{X\times G}\bigvee_{i\in \left[ n_{1}\right]
}S^{-i}\left( P\vee c\right) \right\Vert _{\mathcal{M}}<\frac{\delta _{1}}{%
100}
\end{equation*}%
and this is sufficient to imply (\ref{dist match}).

Finally we verify that condition (\ref{good A}) holds on a sufficiently
large set. Fix a column $C^{\prime }$ of $R^{\prime }.$ Let $D$ denote the
set of real $4-$tuples $\left( t,l,j,i\right) $ associated with $C^{\prime }$
such that:

(a) $\rho ^{\prime }\left[ F_{n}\left( i\right) ,F_{n}\left( \psi \left(
i\right) \right) \right] <\zeta $,

(b) $\rho ^{\prime }\left[ F_{n}\left( \psi \left( i\right) \right)
,\bigvee_{u\in \left[ n\right] }\bar{S}^{-u}\left( \bar{P}\vee \bar{c}%
\right) \left( \tau _{0,t}\psi \left( i\right) \right) \right] <\delta +%
\frac{\delta }{100},$

(c) $\bigvee_{u\in \left[ n\right] }\bar{S}^{-u}\left( \bar{P}\vee \bar{c}%
\vee \boldsymbol{1}_{A_{1}}\right) \left( \tau _{0,t}\psi \left( i\right)
\right) $ and $\bigvee_{u\in \left[ n\right] }\bar{S}^{-u}\left( \bar{P}\vee 
\bar{c}\vee \boldsymbol{1}_{A_{1}}\right) \left( \hat{g}_{j}^{l,t}\left(
i\right) \right) $ are $\zeta -$close.

(d) $\tau _{0,t}\psi \left( i\right) $ is not in one of the miscellaneous
atoms of $\left( Q_{0}\vee \mathcal{\tilde{R}}_{0}\right) $ constructed in
step 1.

As we argued above, conditions (a) and (b) hold for a fraction of real $4-$%
tuples greater than $\left( 1-\zeta \right) $ and $\left( 1-\frac{\delta }{1-%
\frac{\delta _{1}}{100}}\right) >1-2\delta ,$ respectively. By similar
arguments, condition (c) holds for a fraction greater than $\left( 1-2\zeta
\right) $ and (d) holds for a fraction greater than $\left( 1-\frac{%
\varepsilon ^{\prime }/100}{1-\frac{\delta _{1}}{100}}\right) >\left( 1-%
\frac{\varepsilon ^{\prime }}{50}\right) .$ Thus all four conditions hold
for a fraction greater than $\left( 1-\frac{\varepsilon ^{\prime }}{40}%
\right) .$ ($\delta $, $\delta _{1}$and $\zeta $ must have been chosen to
make this last inequality hold).

In the present argument we can ignore the partitions $P$ and $\bar{P}$ in
the above conditions. In particular, when $\left( t,l,j,i\right) \in D$ and
writing $c_{n}\left( i\right) $ for the second component of $F_{n}\left(
i\right) ,$ we have that

(e) $c_{n}\left( i\right) $ and $\bigvee_{u\in \left[ n\right] }\bar{S}^{-u}%
\bar{c}\left( \hat{g}_{j}^{l,t}\left( i\right) \right) $ are uniformly close
to within $2\delta $.

We also note that condition (c) gives

(f) $\bigvee_{u\in \left[ n\right] }\bar{S}^{-u}\left( \boldsymbol{1}%
_{A_{1}}\right) \left( \tau _{0,t}\psi \left( i\right) \right)
=\bigvee_{u\in \left[ n\right] }\bar{S}^{-u}\left( \boldsymbol{1}%
_{A_{1}}\right) \left( \hat{g}_{j}^{l,t}\left( i\right) \right) .$

For each initial triple $\left( t,l,j\right) $ (i.e. the initial triple of a
real $4-$tuple associated with $C^{\prime }),$ and for each $k\in \left[ 
\frac{\left\vert F\right\vert }{n_{1}}\right] ,$ let $\hat{g}_{j,k}^{l,t}$
denote the restriction of $\hat{g}_{j}^{l,t}$ to the interval $\left[ n_{1}%
\right] +kn_{1}.$ For a given triple $\left( t,l,j\right) $ and $k$ we
consider $\mathcal{\tilde{J}}_{k}=\left\{ i\in \mathcal{J}_{k}\mid \left(
t,l,j,i\right) \in D\right\} .$ (We refrain from writing $\mathcal{\tilde{J}}%
_{j,k}^{l,t}$ for $\mathcal{\tilde{J}}_{k}$).

Let $E=\left\{ \left( t,l,j,k\right) \mid \left\vert \mathcal{\tilde{J}}%
_{k}\right\vert >\left( 1-\sqrt{\frac{\varepsilon ^{\prime }}{40}}\right)
\left\vert \mathcal{J}_{k}\right\vert \right\} .$ Then $E$ has density
greater than $\left( 1-\sqrt{\frac{\varepsilon ^{\prime }}{40}}\right) $ in
the set of all $\left( t,l,j,k\right) .$

Fix $\left( t,l,j,k\right) \in E.$ The range of $\hat{g}_{j,k}^{l,t}$ is an
orbit of $\bar{S}_{1}$ whose initial point we denote by $y.$ Let $C^{\prime
}\left( y\right) $ denote the level of $C^{\prime }$ containing $y.$ Then $%
C^{\prime }\left( y\right) $ is the base of an $\bar{S}_{1}-n_{1}-$column
and is a subset of $\Lambda _{n_{1}}\left( \bar{S}_{1}\right) $. We will
show that (\ref{good A}) holds for all points in $C^{\prime }\left( y\right) 
$.

First we consider the point $y$ itself. If it were the case that $\mathcal{J}%
_{k}=\mathcal{\tilde{J}}_{k}$, then for all $i\in \mathcal{J}_{k}$ condition
(e) would hold. In addition, if for all $q\in Q$ we let $\mathcal{J}%
_{k,q}=\left\{ i\in \mathcal{J}_{k}\mid F_{n}\psi \left( i\right) \in
q\right\} ,$ we would have

\begin{equation*}
\left\Vert dist_{\mathcal{J}_{k,q}}\bigvee_{u\in \left[ n\right] }\bar{S}%
^{-u}\left( \boldsymbol{1}_{A_{1}}\right) \left( \hat{g}_{j}^{l,t}\right)
,dist_{q\cap \mathcal{V}_{0}}\bigvee_{u\in \left[ n\right] }\bar{S}%
^{-u}\left( \boldsymbol{1}_{A_{1}}\right) \right\Vert _{\mathcal{M}}<\frac{%
\varepsilon ^{\prime }}{50}
\end{equation*}%
(since the miscellaneous atoms occupy less than an $\frac{\varepsilon
^{\prime }}{100}-$fraction of $\mathcal{V}_{0}$). Therefore, we would have%
\begin{equation*}
\left\Vert dist_{\mathcal{J}_{k}}\bigvee_{u\in \left[ n\right] }\bar{S}%
^{-u}\left( \bar{c}\vee \boldsymbol{1}_{A_{1}}\right) \left( \hat{g}%
_{j}^{l,t}\right) ,dist_{\mathcal{V}_{0}}\bigvee_{u\in \left[ n\right] }\bar{%
S}^{-u}\left( \bar{c}\vee \boldsymbol{1}_{A_{1}}\right) \right\Vert _{%
\mathcal{M}}<\frac{\varepsilon ^{\prime }}{50}+\delta .
\end{equation*}%
(since the $c-n-$names can differ by $\delta $). This would give%
\begin{equation*}
\left\Vert dist_{\mathcal{J}_{k}}\bigvee_{u\in \left[ n\right] }\bar{S}%
^{-u}\left( \bar{c}\vee \boldsymbol{1}_{A_{1}}\right) \left( \hat{g}%
_{j}^{l,t}\right) ,dist_{\bar{X}\times G}\bigvee_{u\in \left[ n\right] }\bar{%
S}^{-u}\left( \bar{c}\vee \boldsymbol{1}_{A_{1}}\right) \right\Vert _{%
\mathcal{M}}<\frac{\varepsilon ^{\prime }}{50}+\delta +\zeta .
\end{equation*}%
But since we only have that $\left\vert \mathcal{\tilde{J}}_{k}\right\vert
>\left( 1-\sqrt{\frac{\varepsilon ^{\prime }}{40}}\right) \left\vert 
\mathcal{J}_{k}\right\vert ,$ we have instead%
\begin{equation*}
\left\Vert dist_{\mathcal{J}_{k}}\bigvee_{u\in \left[ n\right] }\bar{S}%
^{-u}\left( \bar{c}\vee \boldsymbol{1}_{A_{1}}\right) \left( \hat{g}%
_{j}^{l,t}\right) ,dist_{\bar{X}\times G}\bigvee_{u\in \left[ n\right] }\bar{%
S}^{-u}\left( \bar{c}\vee \boldsymbol{1}_{A_{1}}\right) \right\Vert _{%
\mathcal{M}}<\frac{\varepsilon ^{\prime }}{50}+\delta +\zeta +\sqrt{\frac{%
\varepsilon ^{\prime }}{40}}<\sqrt{\frac{\varepsilon ^{\prime }}{30}}
\end{equation*}%
(if $\varepsilon ^{\prime }$, $\delta $, and $\zeta $ are sufficiently
small).

For each $i\in \mathcal{J}_{k}$ we write $n\left( i\right) $ for the $n-$%
block begining at $i$ and $n\left( \mathcal{J}_{k}\right) $ for $%
\tbigcup_{i\in \mathcal{J}_{k}}n\left( i\right) .$ Then we have%
\begin{equation*}
\left\Vert dist_{n\left( \mathcal{J}_{k}\right) }\left( \bar{c}\vee 
\boldsymbol{1}_{A_{1}}\right) \left( \hat{g}_{j}^{l,t}\right) ,dist_{\bar{X}%
\times G}\left( \bar{c}\vee \boldsymbol{1}_{A_{1}}\right) \right\Vert _{%
\mathcal{M}}<\sqrt{\frac{\varepsilon ^{\prime }}{30}}.
\end{equation*}%
Since $\left\vert n\left( \mathcal{J}_{k}\right) \right\vert >\left( 1-\zeta
\right) n_{1}$ we get 
\begin{equation*}
\left\Vert dist_{\left[ n_{1}\right] +kn_{1}}\left( \bar{c}\vee \boldsymbol{1%
}_{A_{1}}\right) \left( \hat{g}_{j}^{l,t}\right) ,dist_{\bar{X}\times
G}\left( \bar{c}\vee \boldsymbol{1}_{A_{1}}\right) \right\Vert _{\mathcal{M}%
}<\sqrt{\frac{\varepsilon ^{\prime }}{20}}.
\end{equation*}%
In other words,%
\begin{equation*}
\left\Vert dist_{\bar{S}_{1}^{\left[ n_{1}\right] }y}\left( \bar{c}\vee 
\boldsymbol{1}_{A_{1}}\right) ,dist_{\bar{X}\times G}\left( \bar{c}\vee 
\boldsymbol{1}_{A_{1}}\right) \right\Vert _{\mathcal{M}}<\sqrt{\frac{%
\varepsilon ^{\prime }}{20}}.
\end{equation*}%
Since $A_{1}\in \mathcal{\bar{A}}$, $\bar{c}$ is independent of $\boldsymbol{%
1}_{A_{1}},$ so for all $h\in G,$ $dist_{\bar{X}\times G}\left( \bar{c}h\vee 
\boldsymbol{1}_{A_{1}}\right) =dist_{\bar{X}\times G}\left( \bar{c}\vee 
\boldsymbol{1}_{A_{1}}\right) $ and $dist_{\bar{S}_{1}^{\left[ n_{1}\right]
}yh}\left( c\vee \boldsymbol{1}_{A_{1}}\right) =dist_{\bar{S}_{1}^{\left[
n_{1}\right] }y}\left( ch\vee \boldsymbol{1}_{A_{1}}\right) ,$ so 
\begin{equation*}
\left\Vert dist_{\bar{S}_{1}^{\left[ n_{1}\right] }yh}\left( \bar{c}\vee 
\boldsymbol{1}_{A_{1}}\right) ,dist_{\bar{X}\times G}\left( \bar{c}\vee 
\boldsymbol{1}_{A_{1}}\right) \right\Vert _{\mathcal{M}}<\sqrt{\frac{%
\varepsilon ^{\prime }}{20}}.
\end{equation*}%
If $\tilde{y}\in C^{\prime }\left( y\right) $ and $\bar{c}\left( \tilde{y}%
\right) =\bar{c}\left( y\right) ,$ then for all $u\in \left[ n_{1}\right] ,$ 
$\rho \left( \bar{c}\left( \bar{S}_{1}^{u}\tilde{y}\right) ,\bar{c}\left( 
\bar{S}_{1}^{u}y\right) \right) <\zeta $ so we get 
\begin{equation*}
\left\Vert dist_{\bar{S}_{1}^{\left[ n_{1}\right] }\tilde{y}}\left( \bar{c}%
\vee \boldsymbol{1}_{A_{1}}\right) ,dist_{\bar{X}\times G}\left( \bar{c}\vee 
\boldsymbol{1}_{A_{1}}\right) \right\Vert _{\mathcal{M}}<\sqrt{\frac{%
\varepsilon ^{\prime }}{20}}+\zeta
\end{equation*}%
and so as before, for all $h\in G$%
\begin{equation*}
\left\Vert dist_{\bar{S}_{1}^{\left[ n_{1}\right] }\tilde{y}h}\left( \bar{c}%
\vee \boldsymbol{1}_{A_{1}}\right) -dist_{\bar{X}\times G}\left( \bar{c}\vee 
\boldsymbol{1}_{A_{1}}\right) \right\Vert _{\mathcal{M}}<\sqrt{\frac{%
\varepsilon ^{\prime }}{20}}+\zeta .
\end{equation*}%
For simplicity, let's suppose that here we got $\varepsilon ^{\prime }$ as
an upper estimate, as we could have done. Then for all $z\in C^{\prime
}\left( y\right) $ we have 
\begin{equation*}
\frac{1}{n_{1}}\sum \boldsymbol{1}_{A_{1}}\left( \bar{S}_{1}^{u}z\right) >%
\bar{\mu}\left( A_{1}\right) -\varepsilon ^{\prime }
\end{equation*}%
and conditioning on $\left\{ u\mid \bar{S}_{1}^{u}z\in A_{1}\right\} ,$ we
have 
\begin{equation*}
\left\Vert dist_{\left\{ u\mid \bar{S}_{1}^{u}z\in A_{1}\right\} }\bar{c}%
\left( \left( \bar{S}_{1}^{u}z\right) \right) -\lambda \right\Vert _{%
\mathcal{M}}<\varepsilon ^{\prime }
\end{equation*}%
so that (by the choice of $\varepsilon ^{\prime }$) 
\begin{equation*}
\left\Vert dist_{\left\{ u\mid \bar{S}_{1}^{u}z\in A_{1}\right\} }\bar{c}%
\left( \left( \bar{S}_{1}^{u}z\right) \right) -\lambda \left( A_{2}\right)
\right\Vert _{\mathcal{M}}<\frac{\varepsilon }{100}
\end{equation*}%
and so 
\begin{equation*}
\frac{1}{n_{1}}\sum \boldsymbol{1}_{A}\left( \bar{S}_{1}^{u}z\right) >\bar{%
\mu}\times \lambda \left( A_{1}\right) -\varepsilon .
\end{equation*}

Thus for all $\left( t,l,j,k\right) $ as above, and all $z$ in the base of
the corresponding $\bar{S}_{1}-n_{1}-$column block, condition (\ref{good A})
holds. But the union of these column blocks covers at least a $\left( 1-%
\sqrt{\frac{\varepsilon ^{\prime }}{40}}\right) -$fraction of the portion of 
$\bar{R}_{0}\cap C^{\prime }$ and hence at least a $\left( 1-4\delta \right)
\left( 1-\sqrt{\frac{\varepsilon ^{\prime }}{40}}\right) -$fraction of $\bar{%
R}_{1}\cap C^{\prime }.$ Therefore, on at least a $\left( 1-\varepsilon
\right) -$fraction of $\Lambda _{n_{1}}\left( \bar{S}_{1}\right) \cap
C^{\prime }$ we get condition (\ref{good A}). Since the same argument
applies to every other column of $R^{\prime }$, the argument is complete.
\end{proof}

The above lemma will now be extended to the case of countable partitions.
The statement is identical, with the understanding that the partitions $P$
and $\bar{P}$ are countably infinite.

\begin{lemma}
\label{ctble dil}(Distribution Improvement Lemma for countable partitions)

\begin{proof}
Fix $\varepsilon >0$ and $A_{2}\subset G$ open. Let $n$ and $\delta $ be
given by lemma \ref{dil} with respect to $\varepsilon $ and $A_{2}$. Suppose
that $\left( S,T,\sigma ,X\right) $ and $\left( \bar{S}_{0},\bar{T}_{0},\bar{%
\sigma}_{0},\bar{X}\right) $ are\ ergodic $G-$extensions on $\left( X\times
G\right) $ and $\left( \bar{X}\times G\right) $ respectively with partitions 
$P$ and $\bar{P}$ are as in the statement of this lemma. Fix $n_{1},\delta
_{1}$ and $A_{1}\subset \bar{X}.$ We assume that the elements of $P$ are
indexed by $\mathbb{N},$ and we let $P_{N}$ denote the partition formed by
replacing the set of elements of $P$ indexed by integers greater than $N$ by
their union. Choose $N$ so large that 
\begin{equation}
\left\Vert dist_{X\times G}\bigvee_{i\in \left[ n_{1}\right] }S^{-i}\left(
P_{N}\vee c\right) ,dist_{X\times G}\bigvee_{i\in \left[ n_{1}\right]
}S^{-i}\left( P\vee c\right) \right\Vert _{\mathcal{M}}<\frac{\delta _{1}}{2}
\label{n1dist for PN}
\end{equation}%
and \ 
\begin{equation}
\left\Vert dist_{X\times G}\bigvee_{i\in \left[ n\right] }S^{-i}\left(
P_{N}\vee c\right) ,dist_{Dom\left( \bar{S}^{n}\right) }\bigvee_{i\in \left[
n\right] }\bar{S}^{-i}\left( \bar{P}\vee c\right) \right\Vert _{\mathcal{M}%
}<\delta .
\end{equation}%
Apply lemma \ref{dil} to the systems $\left( S,P_{N}\right) ,\left( \bar{S},%
\bar{P}\right) ,\left( \bar{S}_{0}\right) ,$ and the set $A_{1}\times A_{2}$
but using the parameter $\frac{\delta _{1}}{2}$ instead of $\delta _{1}.$
This gives us a new partial speedup $\bar{S}_{1}$ of $\bar{S}_{0},$ a
partition $\bar{P}_{1}$ and a function $\bar{\alpha}$ satisfying the
conclusions of the lemma \ref{dil} (with respect to $\left( S,P_{N}\right) $%
). But then condition \ref{n1dist for PN} gives the conclusion \ref{dist
match}. All the other conclusions don't refer to $P,$ so they are met as
well.
\end{proof}
\end{lemma}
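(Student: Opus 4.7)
The plan is to reduce the countable-partition case to Lemma \ref{dil} by truncating the countable target partition $P$ to a finite partition $P_N$ that is sufficiently fine in the relevant distributional sense. Given $\varepsilon>0$ and open $A_2\subset G$, I first extract the constants $n$ and $\delta$ from Lemma \ref{dil}. Listing the atoms of $P$ as $P_1,P_2,\ldots$, I form $P_N$ by amalgamating all $P_i$ with $i>N$ into a single ``tail'' atom, so that $P_N$ is a finite partition coarser than $P$.

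The choice of $N$ must satisfy two distributional estimates. First, $N$ must be large enough that
\[
\left\Vert dist_{X\times G}\bigvee_{i\in[n_1]}S^{-i}(P_N\vee c),\ dist_{X\times G}\bigvee_{i\in[n_1]}S^{-i}(P\vee c)\right\Vert_{\mathcal{M}}<\frac{\delta_1}{2},
\]
so that the $n_1$-conclusion for $P_N$ can be pulled back to $P$. Second, $N$ must be large enough that the hypothesized $\delta$-match between the $n$-distribution of $\bar{S}$ and that of $S$ continues to hold when $P$ is replaced by $P_N$, so that the hypotheses of Lemma \ref{dil} are actually satisfied. Both conditions hold for all sufficiently large $N$, since the shift-distributions of $P_N$ converge to those of $P$ as $N\to\infty$ (the total variation contribution of the merged tail atoms tends to zero).

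With such an $N$ fixed, I invoke Lemma \ref{dil} on the data $(S,P_N)$, $(\bar{S},\bar{P})$, $\bar{S}_0$, $A_1\times A_2$, using $\delta_1/2$ in place of $\delta_1$. This produces a partial $G$-speedup $\bar{S}_1$, a partition $\bar{P}_1$, and a function $\bar{\alpha}:\bar{X}\to G$ verifying all seven conclusions of Lemma \ref{dil} with $P_N$ in the role of $P$ and $\delta_1/2$ in the role of $\delta_1$.

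Conclusions (\ref{n_1,d_1 regular}), (\ref{close P}), (\ref{small alpha}), (\ref{small break}), and (\ref{good A}) make no reference to $P$, so they transfer verbatim to the countable-partition setting (with the bounds even slightly better than required). The only conclusion requiring work is (\ref{dist match}): the $\delta_1/2$-bound delivered by Lemma \ref{dil} for $P_N$ combines, via the triangle inequality, with the first displayed estimate above to give the required $\delta_1$-bound for $P$. The only real subtlety, which I expect to be a matter of care rather than a serious obstacle, is the simultaneous choice of $N$ dominating both the $n$-scale input estimate and the $n_1$-scale output estimate; this is straightforward because each is a matter of making a single tail-probability small.
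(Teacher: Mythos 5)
Your proposal is correct and follows essentially the same route as the paper's proof: truncate $P$ to a finite $P_N$ with $N$ large enough to control both the $n$-scale hypothesis and the $n_1$-scale output estimate, apply Lemma \ref{dil} with parameter $\delta_1/2$, and recover conclusion (\ref{dist match}) for $P$ by the triangle inequality while noting the remaining conclusions do not mention $P$. Your closing remark about the simultaneous choice of $N$ is exactly the point the paper flags afterward, namely that $n$ and $\delta$ from Lemma \ref{dil} depend only on $\varepsilon$ and $A_2$, which is what makes this order of choices legitimate.
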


We note that the order of quantifiers in the statement of lemma \ref{dil},
namely that $\delta $ and $n$ depend only on $\varepsilon $ and $A_{2}$, was
used in an essential way in this argument.

\section{Factor theorem}

Our goal here is to obtain the following:

\begin{theorem}
\label{factor b}For all $\varepsilon >0$ there exists $\delta >0$ and $n\in 
\mathbb{N}$ such that if $\left( S,T,\sigma ,X\right) $ and $\left( \bar{S}%
_{0},\bar{T}_{0},\bar{\sigma}_{0},\bar{X}\right) $ are ergodic $G-$%
extensions on $X\times G$ and $\bar{X}\times G,$ respectively and if $P$ is
a generator for $\left( T,X\right) $ and $\bar{P}$ is a partition of $\bar{X}%
,$ such that 
\begin{equation}
\left\Vert dist_{X\times G}\bigvee_{i\in \left[ n\right] }S^{-i}\left( P\vee
c\right) ,dist_{\bar{X}\times G}\bigvee_{i\in \left[ n\right] }\bar{S}%
_{0}^{-i}\left( \bar{P}\vee c\right) \right\Vert _{\mathcal{M}}<\delta ,
\end{equation}%
then there exists an ergodic $G-$speedup $\left( \hat{S},\hat{T},\hat{\sigma}%
,\bar{X}\right) $ of $\left( \bar{S}_{0},\bar{T}_{0},\bar{\sigma}_{0},\bar{X}%
\right) $ and a partition $\hat{P}$ of $\bar{X}$ and a measurable function $%
\hat{\alpha}:\bar{X}\rightarrow G$ such that $\left\vert \bar{P}-\hat{P}%
\right\vert <\varepsilon ,$ 
\begin{equation*}
\int_{\bar{X}}\rho \left( \hat{\alpha}\left( \bar{x}\right) ,id_{G}\right) d%
\bar{\mu}<\varepsilon ,
\end{equation*}
\begin{equation*}
\bar{\mu}\times \lambda \left\{ \left( \bar{x},g\right) \mid \hat{S}\left( 
\bar{x},g\right) \neq \bar{S}_{0}\left( \bar{x},g\right) \right\}
<\varepsilon ,
\end{equation*}
and for all $n\in \mathbb{N},$%
\begin{equation*}
\left\Vert dist_{X\times G}\bigvee_{i\in \left[ n\right] }S^{-i}\left( P\vee
c\right) ,dist_{\bar{X}\times G}\bigvee_{i\in \left[ n\right] }\hat{S}%
^{-i}\left( \hat{P}\vee \hat{\alpha}c\right) \right\Vert _{\mathcal{M}}=0.
\end{equation*}%
In particular, $\left( \hat{S},\hat{T},\hat{\sigma},\bar{X}\right) $ has $%
\left( S,T,\sigma ,X\right) $ as a $G-$factor.
\end{theorem}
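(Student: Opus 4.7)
The plan is to produce $(\hat S, \hat P, \hat\alpha)$ as the almost-everywhere limit of a sequence $(\bar S_j, \bar P_j, \hat\alpha_j)$ generated by iterated application of Lemma \ref{ctble dil}, in the style of the Ornstein-type convergent constructions mentioned in the introduction. Choose in advance summable sequences $\varepsilon_j \downarrow 0$ and $\delta_j \downarrow 0$ with $\sum\varepsilon_j, \sum\delta_j$ small relative to $\varepsilon$, together with $n_j\to\infty$. Enumerate a countable algebra $\{A_{1,j}\}$ that is dense in the measure algebra of $(\bar X,\bar\mu)$, and a countable neighborhood basis $\{A_{2,j}\}$ of $\mathrm{id}_G$, arranging that every pair $(A_{1,i},A_{2,k})$ occurs infinitely often along the iteration. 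Using the $n,\delta$ supplied by Lemma \ref{ctble dil} for $(\varepsilon_1, A_{2,1})$ as the $n,\delta$ of the theorem hypothesis, view $\bar S_0$ on a suitable Rokhlin tower as an $(n,\delta)$-regular partial $G$-speedup paired with $\bar P$. At step $j$, apply Lemma \ref{ctble dil} with parameters $(\varepsilon_j, A_{2,j})$, target regularity $(n_{j+1},\delta_{j+1})$, and distinguished set $A_{1,j}$, obtaining $(\bar S_{j+1},\bar P_{j+1},\bar\alpha_{j+1})$ and letting $\hat\alpha_{j+1}$ be the natural composition of the $\bar\alpha$'s produced so far.

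Convergence of the $\bar S_j$ to a well-defined transformation $\hat S$ on a set of full measure follows from the ``small break'' estimate (\ref{small break}) together with $\sum\delta_j<\infty$ via Borel--Cantelli: outside a null set, only finitely many $n_j$-ladder blocks through a generic point are ever broken, so a tail value of $\hat S$ stabilizes. Summability of the partition changes (\ref{close P}) yields a limit partition $\hat P$ with $|\bar P-\hat P|<\varepsilon$, and summability of $\int\rho(\bar\alpha_j,\mathrm{id}_G)\,d\bar\mu$ combined with two-sided invariance of $\rho$ gives a limit function $\hat\alpha$ with $\int\rho(\hat\alpha,\mathrm{id}_G)\,d\bar\mu<\varepsilon$. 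The distribution match (\ref{dist match}) with $\delta_{j+1}\to 0$ and $n_{j+1}\to\infty$ passes to the limit to yield $\|\,\mathrm{dist}\bigvee_{i\in[n]}S^{-i}(P\vee c),\,\mathrm{dist}\bigvee_{i\in[n]}\hat S^{-i}(\hat P\vee\hat\alpha c)\,\|_{\mathcal M}=0$ for every $n$; since $P$ generates $T$ and the $G$-extension structure is preserved throughout, this produces $(S,T,\sigma,X)$ as a $G$-factor of $(\hat S,\hat T,\hat\sigma,\bar X)$.

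The main obstacle, and the reason the sets $A_1,A_2$ are threaded through Lemma \ref{ctble dil}, is ergodicity of the limit $\hat S$: speedups of ergodic transformations need not be ergodic. I would apply Lemma \ref{ergodicity} to the algebra generated by all rectangles $A_{1,i}\times U$ where $U$ ranges over continuity sets refining the basis $\{A_{2,k}\}$. The key input is conclusion (\ref{good A}): on a $(1-\varepsilon_j)$-fraction of the $n_{j+1}$-ladder of $\bar S_{j+1}$, the empirical frequency along an $n_{j+1}$-orbit of visiting $A_{1,j}\times A_{2,j}$ is at least $\bar\mu\times\lambda(A_{1,j}\times A_{2,j})-\varepsilon_j$. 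Because $\hat S$ agrees with $\bar S_{j+1}$ on most such ladder blocks, this frequency statement persists in the limit. Given two sets $C_i,C_\ell$ from the dense algebra with $\bar\mu\times\lambda(C_i)<\bar\mu\times\lambda(C_\ell)$, we choose $j$ large so that $A_{1,j}\times A_{2,j}$ is contained in $C_\ell$ and refines $C_i$ finely enough; then defining $T'$ as the first-return-style map sending $y\in C_i$ to $\hat S^{u(y)}y\in C_\ell$, where $u(y)\in[n_{j+1}]$ is the first visit time, places $T'$ in the full group of $\hat S$ and verifies $\bar\mu\times\lambda(C_\ell\cap T'(C_i))>(1-\varepsilon)\bar\mu\times\lambda(C_i)$, as required by Lemma \ref{ergodicity}. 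The delicate balancing act is scheduling the growth of $n_{j+1}$ against the decay of $\delta_{j+1}$ and $\varepsilon_j$ so that this ergodicity mechanism and the a.e.\ convergence of the speedups coexist; this is precisely the kind of parameter choreography familiar from Ornstein's isomorphism machinery.
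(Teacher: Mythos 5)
Your overall strategy coincides with the paper's: iterate Lemma \ref{ctble dil} with summable parameters, pass to an a.e.\ limit, and use the rectangles threaded through conclusion (\ref{good A}) together with Lemma \ref{ergodicity} to force ergodicity (the paper packages the iteration as the separate Lemma \ref{factor c} and reserves the proof of Theorem \ref{factor b} itself for the initialization). The first genuine gap is that initialization, which you dispose of with the phrase ``view $\bar{S}_{0}$ on a suitable Rokhlin tower as an $(n,\delta)$-regular partial $G$-speedup.'' The restriction of $\bar{S}_{0}$ to a Rokhlin tower is in general not $(n,\delta)$-regular: the fourth condition in the definition of regularity demands that, for each base point, the $n$-names read off at the ladder positions $0,n,2n,\dots$ of its column have $\delta$-good $n$-distribution, and the ergodic theorem only controls the distribution over \emph{all} starting positions in the $L$-orbit, not over the arithmetic progression $n\mathbb{Z}$ (a process with period-$n$ structure defeats the naive choice). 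Moreover the hypothesis (\ref{delta dist match}) of Lemma \ref{dil} is a match against the $n$-distribution of the partial speedup over $Dom(\bar{S}^{n})$, not against that of $\bar{S}_{0}$. Arranging both requires a genuine initial speedup that skips a small fraction of points, built from the cyclic-offset covering of Lemma \ref{model name 1}, and it must be carried out while altering $\bar{S}_{0}$ on a set of measure less than $\varepsilon/2$, since otherwise the conclusion $\bar{\mu}\times\lambda\{\hat{S}\neq\bar{S}_{0}\}<\varepsilon$ is lost. This construction is the entire content of the paper's proof of the theorem once the iterative lemma is granted, so it cannot be absorbed into the word ``suitable.''

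The second gap is in the ergodicity step. The map you propose, sending $y\in C_{i}$ to $\hat{S}^{u(y)}y$ where $u(y)$ is the first visit time to $C_{\ell}$, is not injective: two points of $C_{i}$ on the same orbit with no intervening visit to $C_{\ell}$ receive the same image, so your $T'$ is not an element of the full group of $\hat{S}$ and Lemma \ref{ergodicity} does not apply to it. What (\ref{good A}) actually provides is a counting statement: on a common ladder block where both $C_{i}$ and $C_{\ell}$ have empirical density within $\varepsilon'$ of their measures (with $2\varepsilon'$ less than the gap between the measures), the number of visits to $C_{i}$ is at most the number of visits to $C_{\ell}$, so one can choose an \emph{injection} of the visit times to $C_{i}$ into the visit times to $C_{\ell}$ within each good block and extend to a full-group element off these blocks; it is this bijective matching of visits, not a first-return map, that Lemma \ref{ergodicity} requires. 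You also need the observation, made explicitly in the paper, that the good-density property secured for a rectangle at stage $k$ persists at all later stages because most later ladder blocks are mostly covered by unbroken earlier ones; without this you cannot exhibit a single stage at which two given rectangles are simultaneously good. With these two repairs your argument matches the paper's.
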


This theorem will follow from the next lemma, which will be proved by
repeated application of lemma \ref{ctble dil}.

\begin{lemma}
\label{factor c}For all $\varepsilon >0$ there exists $\delta >0$ and $n\in 
\mathbb{N}$ so that if $\left( S,T,\sigma ,X\right) $ and $\left( \bar{S}%
_{0},\bar{T}_{0},\bar{\sigma}_{0},\bar{X}\right) $ are ergodic $G-$%
extensions on $X\times G$ and $\bar{X}\times G,$ respectively, and $\left( 
\bar{S},\bar{T},\bar{\sigma},\bar{X}\right) $ is a partial $G-$speedup of $%
\bar{S}_{0},$ and $P$ and $\bar{P}$ are partitions of $X$ and $\bar{X},$
respectively, such that $\left( \bar{S},\bar{P}\right) $ is $\left( n,\delta
\right) -$regular, and 
\begin{equation*}
\left\Vert dist_{X\times G}\bigvee_{i\in \left[ n\right] }S^{-i}\left( P\vee
c\right) ,dist_{Dom\left( \bar{S}^{n}\right) }\bigvee_{i\in \left[ n\right] }%
\bar{S}^{-i}\left( \bar{P}\vee c\right) \right\Vert _{\mathcal{M}}<\delta ,
\end{equation*}%
then there is an ergodic $G-$speedup $\left( \hat{S},\hat{T},\hat{\sigma},%
\bar{X}\right) $ of $\left( \bar{S}_{0},\bar{T}_{0},\bar{\sigma}_{0},\bar{X}%
\right) $ and a partition $\hat{P}$ of $\bar{X}$ and a measurable function $%
\hat{\alpha}:\bar{X}\rightarrow G$ such that 
\begin{equation}
\left\vert \bar{P}-\hat{P}\right\vert <\varepsilon ,  \label{close P fac}
\end{equation}%
\begin{equation}
\int_{\bar{X}}\rho \left( \hat{\alpha}\left( \bar{x}\right) ,id_{G}\right) d%
\bar{\mu}<\varepsilon ,  \label{small alpha fac}
\end{equation}%
\begin{equation}
\bar{\mu}\times \lambda \left\{ \left( \bar{x},g\right) \mid \hat{S}\left( 
\bar{x},g\right) \neq \bar{S}_{0}\left( \bar{x},g\right) \right\}
<\varepsilon ,  \label{small break fac}
\end{equation}%
and for all $n\in \mathbb{N},$%
\begin{equation}
\left\Vert dist_{X\times G}\bigvee_{i\in \left[ n\right] }S^{-i}\left( P\vee
c\right) ,dist_{\bar{X}\times G}\bigvee_{i\in \left[ n\right] }\hat{S}%
^{-i}\left( \hat{P}\vee \hat{\alpha}c\right) \right\Vert _{\mathcal{M}}=0.
\label{equal dist fac}
\end{equation}
\end{lemma}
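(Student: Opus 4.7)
The plan is to iterate Lemma \ref{ctble dil} countably many times and pass to a limit. Fix summable $\varepsilon_k \searrow 0$ with $\sum_k \varepsilon_k < \varepsilon$, auxiliary sequences $\delta_k \searrow 0$ and $n_k \nearrow \infty$, and a countable family $\{C_r = A_1^{(r)} \times A_2^{(r)}\}_{r \geq 1}$ whose algebra is dense in the measure algebra of $\bar{X} \times G$ (each $A_2^{(r)}$ open in $G$), enumerated so that every index $r$ appears infinitely often as some $r(k+1)$. The values $\delta$ and $n$ advertised in the statement are defined to be the thresholds produced by Lemma \ref{ctble dil} for the first iteration's parameters $(\varepsilon_1, A_2^{(r(1))})$; analogously, the target regularity pair $(n_{k+1}, \delta_{k+1})$ of step $k$ is arranged to coincide with the threshold Lemma \ref{ctble dil} demands from its next invocation. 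Setting $(\bar{S}^{(0)}, \bar{P}^{(0)}) = (\bar{S}, \bar{P})$, at step $k$ apply Lemma \ref{ctble dil} to $(S,P)$, $(\bar{S}_0)$, and $(\bar{S}^{(k)}, \bar{P}^{(k)})$ with tolerance $\varepsilon_{k+1}$, sets $A_1 = A_1^{(r(k+1))}$, $A_2 = A_2^{(r(k+1))}$, and target $(n_{k+1}, \delta_{k+1})$, producing $(\bar{S}^{(k+1)}, \bar{P}^{(k+1)}, \bar{\alpha}_{k+1})$.

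Next I would pass to limits. From (\ref{close P}) summed, $\bar{P}^{(k)}$ is Cauchy with limit $\hat{P}$ satisfying $|\bar{P} - \hat{P}| < \varepsilon$. From (\ref{small alpha}) and two-sided invariance of $\rho$, the successive left products $\hat{\alpha}^{(k)} = \bar{\alpha}_k \cdots \bar{\alpha}_1$ are Cauchy in $L^1(\bar{\mu}, \rho)$, yielding $\hat{\alpha}: \bar{X} \to G$ with $\int \rho(\hat{\alpha}, id_G) d\bar{\mu} < \varepsilon$. From (\ref{small break}) summed and Borel--Cantelli, the $\bar{S}^{(k)}$ stabilize pointwise almost everywhere to a map $\hat{S}$; since $\bar{\mu} \times \lambda(Dom(\bar{S}^{(k)})) > 1 - \delta_k \to 1$, the limit $\hat{S}$ is defined almost everywhere, hence is a total $G$-speedup of $\bar{S}_0$. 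The cumulative break bound plus initial smallness of the set where $\bar{S}$ differs from $\bar{S}_0$ delivers (\ref{small break fac}). Since (\ref{dist match}) at stage $k$ bounds the $n_{k+1}$-distribution discrepancy by $\delta_{k+1}$, and $n_k \to \infty$, $\delta_k \to 0$, the limit satisfies (\ref{equal dist fac}) for every finite $n$.

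For ergodicity of $\hat{S}$, I would apply Lemma \ref{ergodicity} with the generating family $\{C_r\}$. Given $C_r, C_s$ with $(\bar{\mu} \times \lambda)(C_r) < (\bar{\mu} \times \lambda)(C_s)$ and $\eta > 0$, select $k$ large enough that $r(k+1) = s$, $\varepsilon_{k+1} < \eta/3$, and $\sum_{j > k+1} \varepsilon_j < \eta/3$. Condition (\ref{good A}) at stage $k+1$ guarantees that $(1-\varepsilon_{k+1})$-most points $y \in \Lambda_{n_{k+1}}(\bar{S}^{(k+1)})$ have $\bar{S}^{(k+1)}$-orbit frequency in $C_s$ at least $(\bar{\mu} \times \lambda)(C_s) - \varepsilon_{k+1}$. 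The total break measure accrued from stages $j > k+1$ is at most $\sum_{j > k+1} \varepsilon_j$, so on $(1 - \eta/2)$-most of this favorable set the $\hat{S}$-orbit inherits an analogous frequency bound in $C_s$. Define $T'$ in the full group of $\hat{S}$ by sending each point of $C_r$ on this set to its first $\hat{S}$-visit of $C_s$; then $(\bar{\mu} \times \lambda)(C_s \cap T'(C_r)) > (1-\eta)(\bar{\mu} \times \lambda)(C_r)$, and Lemma \ref{ergodicity} concludes that $\hat{S}$ is ergodic.

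The principal obstacle is that (\ref{good A}) at stage $k+1$ refers only to $\bar{S}^{(k+1)}$-orbits and only to a single pair $(A_1, A_2)$, whereas ergodicity requires a frequency statement about $\hat{S}$ for every pair in a generating family. This is resolved by the twin bookkeeping choices noted above: each pair is revisited at arbitrarily late stages, and the post-stage tails $\sum_{j > k+1} \varepsilon_j$ must decay much faster than $\varepsilon_{k+1}$, so that the finitely-specified ``visit $C_s$ often'' certificate survives the infinitely many small subsequent perturbations when transported to the $\hat{S}$-orbit. The essential use of the quantifier structure in Lemma \ref{ctble dil}, wherein the thresholds $\delta$ and $n$ depend only on $\varepsilon$ and $A_2$, is what permits the cascade of thresholds to be arranged in advance.
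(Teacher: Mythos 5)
Your overall architecture matches the paper's: iterate Lemma \ref{ctble dil} along a summable sequence $\varepsilon_k$ and a dense family of rectangles each recurring infinitely often, pass to limits for $\hat{P},\hat{\alpha},\hat{S}$, and invoke Lemma \ref{ergodicity}. The limit-passing and the derivation of (\ref{equal dist fac}) are essentially as in the paper. One bookkeeping point you elide: you apply Lemma \ref{ctble dil} at every stage to the unadjusted $\bar{S}_0$ and $(\bar{S}^{(k)},\bar{P}^{(k)})$, but the hypothesis of that lemma is a distribution match against the plain projection $\bar{c}$, whereas the conclusion of stage $k$ is a match against $\bar{\alpha}_k\bar{c}$. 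The paper resolves this by passing at each stage to the cohomologous extension $\bar{S}_0^{\bar{\beta}_k}$ and its partial speedup $\bar{S}_k^{\bar{\beta}_k}$, so that the accumulated transfer function is absorbed into the cocycle before the lemma is re-applied. As written, your second application does not satisfy the lemma's hypotheses.

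The genuine gap is in the ergodicity argument. First, the map sending each point of $C_r$ to its \emph{first} $\hat{S}$-visit to $C_s$ is not injective (two points of $C_r$ on the same orbit can share a first visit), so it is not an element of the full group. To build an injective assignment you must work block by block: within each $\hat{S}$-ladder block you need the number of visits to $C_s$ to exceed the number of points of $C_r$, and since at this point you do not yet know $\hat{S}$ is ergodic, you cannot appeal to any global recurrence to fix this up across orbits. This forces you to control the block frequencies of \emph{both} rectangles: a lower bound near $(\bar{\mu}\times\lambda)(C_s)$ for $C_s$ \emph{and} (via the lower bound for $C_r$ together with the correct average) an upper bound near $(\bar{\mu}\times\lambda)(C_r)$ for $C_r$, on most blocks of a common stage. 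Your argument invokes (\ref{good A}) only for $C_s$ at the single stage with $r(k+1)=s$ and never controls $C_r$. Since $C_r$ and $C_s$ are targeted at different stages of the enumeration, getting both controls simultaneously requires the paper's propagation step: once a rectangle has good frequency on $(1-\varepsilon')$-most ladder blocks at some stage, it retains this at all later stages because later ladder blocks are, up to small error, concatenations of earlier ones. That propagation, and the two-sided counting it enables inside each block, is what actually produces the full-group element; it is missing from your proposal.
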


\begin{proof}
Fix $\varepsilon >0.$ Choose $\varepsilon _{k}>0$ so that $%
\sum_{k=0}^{\infty }\varepsilon _{k}<\frac{\varepsilon }{2}.$ Fix a sequence
of measurable rectangles $\left\{ A^{\left( k\right) }=A_{1}^{\left(
k\right) }\times A_{2}^{\left( k\right) }\right\} _{k=0}^{\infty }$ in $\bar{%
X}\times G,$ where each $A_{2}^{\left( k\right) }$ is open, each rectangle
appears infinitely often in the sequence, and the sequence is dense in the
measure algebra of $\left( \bar{X}\times G,\bar{\mu}\times \lambda \right) .$
For each $k,$ choose $\delta _{k}<\frac{\varepsilon _{k}}{2}$ and $n_{k}$ by
applying lemma \ref{ctble dil} with respect to $\varepsilon _{k}$ and $%
A_{2}^{\left( k\right) }.$ We will see that $\delta _{0}$ and $n_{0}$ serve
as the $\delta $ and $n$ in the conclusion of this theorem.

Suppose now that $\left( S,T,\sigma ,X\right) $ and $\left( \bar{S}_{0},\bar{%
T}_{0},\bar{\sigma}_{0},\bar{X}\right) $ are given as in the statement of
the lemma$.$ Let $\bar{S}_{1},\bar{P}_{1}$ and $\bar{\alpha}_{1}$ be the
partial speedup of $\bar{S}_{0},$ the partition and the function given by
lemma \ref{ctble dil}. Here we use $A^{\left( 0\right) }=A_{1}^{\left(
0\right) }\times A_{2}^{\left( 0\right) }$ as the rectangle in the statement
of lemma \ref{ctble dil}. Roughly speaking, lemma \ref{ctble dil} allows us
to make an $\varepsilon _{0}-$small modification of $\left( \bar{S},\bar{P}%
\right) $ to obtain $\left( \bar{S}_{1},\bar{P}_{1}\right) .$ The
conclusions of lemma \ref{ctble dil} and the choice of $\delta _{1}$ and $%
n_{1}$ allow us to apply lemma \ref{ctble dil} again to the new $G$
extension $\bar{S}_{0}^{\bar{\alpha}_{1}}$ and its partial speedup $\bar{S}%
_{1}^{\bar{\alpha}_{1}}\,$to obtain $\bar{S}_{2},\bar{P}_{2}$ and $\bar{%
\alpha}_{2}$ which meet the conclusions of lemma \ref{ctble dil} with
respect to the rectangle $A^{\left( 1\right) }=A_{1}^{\left( 1\right)
}\times A_{2}^{\left( 1\right) }.$ That is, we make an $\varepsilon _{1}-$%
small modification of $\left( \bar{S}_{1}^{\bar{\alpha}_{1}},\bar{P}%
_{1}\right) $ and obtain a new partial speedup $\bar{S}_{2}$ of $\bar{S}_{0}$%
, a partition $\bar{P}_{2}$ and a function $\bar{\alpha}_{2}.$ At this
point, we let $\bar{\beta}_{2}=\bar{\alpha}_{2}\bar{\alpha}_{1}$ so that the 
$G$ extension $\bar{S}_{0}^{\bar{\beta}_{2}}$ and its partial speedup $\bar{S%
}_{2}^{\bar{\beta}_{2}}$ meet the conditions to which lemma \ref{ctble dil}
can be applied once again.

Continuing in this way we obtain a sequence of speedups $\bar{S}_{k},$
partitions $\bar{P}_{k}$ and functions $\bar{\alpha}_{k}$ so that, writing $%
\bar{\beta}_{k}=\tprod\limits_{j=0}^{k-1}\bar{\alpha}_{n-j},$ we have for
each $k,$%
\begin{equation}
\left\vert \bar{P}_{k+1}-\bar{P}_{k}\right\vert <\varepsilon _{k},
\label{close pk fac}
\end{equation}%
\begin{equation}
\int_{\bar{X}}\rho \left( \bar{\alpha}_{k}\left( \bar{x}\right)
,id_{G}\right) d\bar{\mu}<\varepsilon _{k},  \label{small alphak fac}
\end{equation}

if $D_{k}$ denotes the set of points in the speedup tower of $\bar{S}_{k}$
such whose ladder block is broken by $\bar{S}_{k+1},$ then%
\begin{equation}
\bar{\mu}\times \lambda \left( D_{k}\right) <\varepsilon _{k},
\label{small breakk fac}
\end{equation}

\begin{equation}
\left\Vert dist_{X\times G}\bigvee_{i\in \left[ n_{k}\right] }S^{-i}\left(
P\vee c\right) ,dist_{Dom\left( \bar{S}_{k}^{n_{k}}\right) }\bigvee_{i\in %
\left[ n_{k}\right] }\bar{S}_{k}^{-i}\left( \bar{P}_{k}\vee \bar{\beta}%
_{k}c\right) \right\Vert _{\mathcal{M}}<\delta _{k},  \label{close kdist fac}
\end{equation}

and the set of $y\in \Lambda _{n_{k}}\left( \bar{S}_{k}\right) $ such that 
\begin{equation}
\frac{1}{n_{k}}\sum_{i\in \left[ n_{k}\right] }\chi _{A^{\left( k\right)
}}\left( \bar{S}_{k}^{i}\left( y\right) \right) >\left( \bar{\mu}\times
\lambda \right) \left( A^{\left( k\right) }\right) -\varepsilon _{k}
\label{good Ak fac}
\end{equation}%
has measure greater than $\left( 1-\varepsilon _{k}\right) \left( \bar{\mu}%
\times \lambda \right) \left( \Lambda _{n_{k}}\left( \bar{S}_{k}\right)
\right) .$

(Recall that $\Lambda _{n_{k}}\left( \bar{S}_{k}\right) $ denotes the $%
n_{k}- $ladder in the speedup tower of $\bar{S}_{k}$).

Conditions \ref{close pk fac} and \ref{small breakk fac} and the fact that,
for each $k,$ $\bar{\mu}\times \lambda \left( Dom\left( \bar{S}_{k}\right)
\right) >1-\varepsilon _{k}$ imply that there is a partition $\hat{P}$ and a
function $\hat{\alpha}$ such that $\lim_{k\rightarrow \infty }\left\vert 
\bar{P}_{k}-\hat{P}\right\vert =0$ and $\lim_{k\rightarrow \infty }\bar{\beta%
}_{k}=\hat{\alpha}$ a.e., where $\hat{P}$ and $\hat{\alpha}$ satisfy
conditions \ref{close P fac} and \ref{small alpha fac}.

Condition \ref{small breakk fac} implies (again using $\bar{\mu}\times
\lambda \left( Dom\left( \bar{S}_{k}\right) \right) >1-\varepsilon _{k}$)
that the partial transformations $\bar{S}_{k}$ converge almost everywhere to
a transformation $\hat{S}$ that is a $G-$speedup of $\bar{S}_{0},$ and $\hat{%
S}$ satisfies \ref{small break fac}.

To establish condition \ref{equal dist fac} we fix $n^{\prime },$ and $%
\delta ^{\prime }$ and choose $k$ so that $n_{k}>n^{\prime }.$ We know that 
\begin{equation*}
\left\Vert dist_{X\times G}\bigvee_{i\in \left[ n_{k}\right] }S^{-i}\left(
P\vee c\right) ,dist_{Dom\left( \bar{S}_{k}^{n_{k}}\right) }\bigvee_{i\in %
\left[ n_{k}\right] }\bar{S}_{k}^{-i}\left( \bar{P}_{k}\vee \bar{\beta}%
_{k}c\right) \right\Vert _{\mathcal{M}}<\delta _{k}
\end{equation*}%
and so the same is true for the $n^{\prime }$ distribution:%
\begin{equation*}
\left\Vert dist_{X\times G}\bigvee_{i\in \left[ n^{\prime }\right]
}S^{-i}\left( P\vee c\right) ,dist_{Dom\left( \bar{S}_{k}^{n}\right)
}\bigvee_{i\in \left[ n^{\prime }\right] }\bar{S}_{k}^{-i}\left( \bar{P}%
_{k}\vee \bar{\beta}_{k}c\right) \right\Vert _{\mathcal{M}}<\delta _{k}
\end{equation*}%
Moreover, the set of $\bar{S}_{k}-$ladder blocks that are broken by $\hat{S}$
has measure less than $\sum_{i=k}^{\infty }\varepsilon _{i},$ and $%
\left\vert \bar{P}_{k}-\hat{P}\right\vert <\sum_{i=k}^{\infty }\varepsilon
_{i},$ and $\int_{\bar{X}}\rho \left( \bar{\beta}_{k}\left( \bar{x}\right) ,%
\hat{\alpha}\left( \bar{x}\right) \right) d\bar{\mu}<\sum_{i=k}^{\infty
}\varepsilon _{i}.$ Since the measure of the speedup tower for $\bar{S}_{k}$
is greater than $1-\frac{\varepsilon _{k}}{2},$ we see that if $k$ is
sufficiently large, (so that the set of points whose $\bar{S}_{k}-n-$orbits
are not wholly contained in a ladder block for $\bar{S}_{k}$ is small), we
get 
\begin{equation*}
\left\Vert dist_{X\times G}\bigvee_{i\in \left[ n^{\prime }\right]
}S^{-i}\left( P\vee c\right) ,dist_{\bar{X}\times G}\bigvee_{i\in \left[
n^{\prime }\right] }\hat{S}^{-i}\left( \hat{P}\vee \hat{\alpha}c\right)
\right\Vert _{\mathcal{M}}<\delta ^{\prime }
\end{equation*}%
Since this is true for all $n^{\prime }$ and $\delta ^{\prime },$ we have
condition \ref{equal dist fac}.

Finally, we show that $\hat{S}$ is ergodic. Fix rectangles $A^{\left(
i\right) }$ and $A^{\left( j\right) }$ where 
\begin{equation*}
\left( \bar{\mu}\times \lambda \right) \left( A^{\left( i\right) }\right)
<\left( \bar{\mu}\times \lambda \right) \left( A^{\left( j\right) }\right) .
\end{equation*}%
Condition \ref{good Ak fac}, and the fact that the measure of the speedup
tower for $\bar{S}_{k}$ is greater than $1-\frac{\varepsilon _{k}}{2}$
implies that for all $\varepsilon ^{\prime }$ there exists $k$ such that the
set of $\bar{S}_{k}-$ladder blocks on which $A^{\left( i\right) }$ has
density within $\varepsilon ^{\prime }$ of the measure of $A^{\left(
i\right) }$ exceeds $1-\varepsilon ^{\prime }.$ For $l>k,$ most $\bar{S}%
_{l}- $ ladder blocks are mostly covered by these $\bar{S}_{k}-$ladder
blocks, so we get the stronger fact that for all $\varepsilon ^{\prime }$
and for all sufficiently large $k,$ the set of $\bar{S}_{k}-$ladder blocks
on which $A^{\left( i\right) }$ has density within $\varepsilon ^{\prime }$
of the measure of $A^{\left( i\right) }$ exceeds $1-\varepsilon ^{\prime }.$
Applying this to both $A^{\left( i\right) }$ and $A^{\left( j\right) },$ we
can choose $k$ so that the above condition holds for both rectangles, and in
addition, the set of $\bar{S}_{k}-$ ladder blocks that are broken by $\hat{S}
$ has measure less than $\varepsilon ^{\prime }.$ If $\varepsilon ^{\prime }$
is small enough, we conclude that there is a transformation $S^{\prime }$ in
the full group of $\hat{S}$ so that $\left( \bar{\mu}\times \lambda \right)
\left( S^{\prime }\left( A^{\left( i\right) }\right) \cap A^{\left( j\right)
}\right) >\left( 1-\varepsilon ^{\prime }\right) \left( \bar{\mu}\times
\lambda \right) \left( A^{\left( i\right) }\right) .$ We conclude from lemma %
\ref{ergodicity} that $\hat{S}$ is ergodic.
\end{proof}

We now give the proof of theorem \ref{factor b} using lemma \ref{factor c}

\begin{proof}
(of theorem \ref{factor b}$)$ Fix $\varepsilon >0.$ Choose $\delta $ and $n$
by lemma \ref{factor c} with respect to $\frac{\varepsilon }{2}.$ Suppose
that $\left( S,T,\sigma ,X\right) $ and $\left( \bar{S}_{0},\bar{T}_{0},\bar{%
\sigma}_{0},\bar{X}\right) $ are ergodic $G-$extensions and $P$ and $\bar{P}$
are partitions satisfying the hypotheses of theorem \ref{factor b}, but
where the distribution match is to within $\frac{\delta }{3}$. Lemma \ref%
{model name 1} gives the following: For all $\zeta >0$ there exists $L\left(
\zeta \right) \in \mathbb{N}$ so that \bigskip \newline
1. for all $L\geq L\left( \zeta \right) ,$ $\left( 1-\zeta \right) -$most
points $\bar{x}\in \bar{X}$ have the property that for all $g\in G,$ 
\begin{equation*}
\left\Vert dist_{\bar{S}_{0}^{\left[ L-n+1\right] }\left( \bar{x},g\right)
}\bigvee_{i\in \left[ n\right] }\bar{S}_{0}^{-i}\left( \bar{P}\vee c\right)
,dist_{\bar{X}\times G}\bigvee_{i\in \left[ n\right] }\bar{S}_{0}^{-i}\left( 
\bar{P}\vee c\right) \ \right\Vert _{\mathcal{M}}<\zeta
\end{equation*}%
and \bigskip \newline
2. the interval $\left[ L-1\right] $ can be $\left( 1-\zeta \right) -$%
disjointly covered by a set of intervals of length $n,$ so that (again for
all $g\in G$) if $J\left( \bar{x}\right) $ is the set of initial integers of
these intervals, 
\begin{equation*}
\left\Vert dist_{\bar{S}_{0}^{\left[ J\left( \bar{x}\right) \right]
}}\bigvee_{i\in \left[ n\right] }\bar{S}_{0}^{-i}\left( \bar{P}\vee c\right)
,dist_{\bar{X}\times G}\bigvee_{i\in \left[ n\right] }\bar{S}_{0}^{-i}\left( 
\bar{P}\vee c\right) \ \right\Vert _{\mathcal{M}}<\zeta
\end{equation*}

In addition, these $n-$blocks are organized into groups of consecutive $n-$%
blocks where these groups can be chosen to be as long as we please.
Consequently, for each such $\bar{x}$ we can speed up the $\bar{T}-L-$orbit
of $\bar{x}$ (and correspondingly speed up the $\bar{S}_{0}-L-$orbit of $%
\left( \bar{x},g\right) ,$ for each $g\in G$) by skipping over any points
that are not in the orbit blocks chosen by these intervals. If the lengths
of the consecutive groups of $n$ blocks are sufficiently large compared to $%
n $, then the distribution of $\left( \bar{S}_{0},P\vee c\right) -n-$names
on such a $G-$speedup orbit segment will be $2\zeta $ close to the
distribution of $\left( \bar{S}_{0},P\vee c\right) -n-$names on $\bar{X}%
\times G.$

Now choose a Rokhlin tower for $\bar{S}_{0}$, measurable with respect to $%
\bar{X}$ and of height $L$ so that all points in its base are of the above
type. For each point $\bar{x}$ of the base, implement the speedup described
above, and remove just enough levels from the top of the orbit segment above 
$\bar{x}$ so that the tower which remains is of constant height $L^{\prime }$%
, where $L^{\prime }$ is a multiple of $n.$ If $\zeta $ was chosen
sufficiently small, this gives a $G-$speedup $\bar{S}$ and a $\left( \delta
,n\right) -$ regular speedup tower that satisfy the hypotheses of lemma \ref%
{factor c}. In addition, we may arrange that 
\begin{equation*}
\bar{\mu}\times \lambda \left\{ \left( \bar{x},g\right) \mid \bar{S}\left( 
\bar{x},g\right) \neq \bar{S}_{0}\left( \bar{x},g\right) \right\} <\frac{%
\varepsilon }{2}
\end{equation*}%
Consequently, lemma \ref{factor c} gives us an ergodic speedup $\left( \hat{S%
},\hat{T},\hat{\sigma},\bar{X}\right) $ satisfying the conclusions of
theorem \ref{factor b}
\end{proof}

\section{Isomorphism theorem}

We now wish to prove our main theorem:

\begin{theorem}
\label{isomorphism a}Let $\left( S,T,\sigma ,X\right) $ and $\left( \bar{S}%
_{0},\bar{T}_{0},\bar{\sigma}_{0},\bar{X}\right) $ be ergodic $G-$extensions
on $X\times G$ and $\bar{X}\times G,$ respectively. Then for all $%
\varepsilon >0$ there exists an ergodic $G-$speedup $\left( \hat{S},\hat{T},%
\hat{\sigma},\bar{X}\right) $ of $\left( \bar{S}_{0},\bar{T}_{0},\bar{\sigma}%
_{0},\bar{X}\right) $ such that 
\begin{equation*}
\bar{\mu}\times \lambda \left\{ \left( \bar{x},g\right) \mid \hat{S}\left( 
\bar{x},g\right) \neq \bar{S}_{0}\left( \bar{x},g\right) \right\}
<\varepsilon ,
\end{equation*}%
and $\left( \hat{S},\hat{T},\hat{\sigma},\bar{X}\right) $ and $\left(
S,T,\sigma ,X\right) $ are $G-$isomorphic.
\end{theorem}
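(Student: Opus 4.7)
The plan is to strengthen the factor theorem (Theorem~\ref{factor b}) to an isomorphism by folding an Ornstein-style back-and-forth into the iteration of Lemma~\ref{factor c}. The key observation is that it suffices to produce, in the conclusion of the factor theorem, a partition $\hat{P}$ that is not merely close to the chosen $\bar{P}$ but is a generator for $\hat{T}$ on $\bar{X}$. Indeed, the factor map $\Phi(\bar{x},g)=(\phi(\bar{x}),\hat{\alpha}(\bar{x})g)$ supplied by the distribution equality is invertible as soon as $\hat{P}\vee\hat{\alpha}c$ generates under $\hat{S}$, and this happens precisely when $\hat{P}$ generates under $\hat{T}$, because $\hat{\alpha}c$ recovers $g$ once $\bar{x}$ is known.

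To force generation, I would fix countable generators $P$ for $(T,X)$ and $\bar{Q}$ for $(\bar{T}_0,\bar{X})$ and rerun the iteration in the proof of Lemma~\ref{factor c} with an interleaved refinement step. Let $\sum\varepsilon_k<\varepsilon$, let $\{A^{(k)}\}$ be a dense sequence of test rectangles (with open second coordinate), and let $\bar{Q}_{N_k}$ denote finite coarsenings of $\bar{Q}$ with $N_k\to\infty$. At each iteration I apply Lemma~\ref{ctble dil} with test rectangle $A^{(k)}$ exactly as in Lemma~\ref{factor c}; once the distribution match is sharp enough that each atom of $\bar{Q}_{N_k}$ is, within error $\delta_k$, a union of cylinders of the $(S,P\vee c)$-process of bounded rank, I perform a refinement step in which the target partition $P$ on the $X$-side is enlarged by the corresponding finite cylinder partition and Lemma~\ref{ctble dil} is reapplied to the enlarged target. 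This produces $\hat{P}_{k+1}$ which refines $\bar{Q}_{N_k}$ up to an $\varepsilon_k$-small error while retaining a sharp $n_k$-distribution match with $(S,P\vee c)$.

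The main obstacle will be coordinating the three error budgets: the partition distance $|\bar{P}_{k+1}-\bar{P}_k|$, the adjustment $\int_{\bar{X}}\rho(\bar{\alpha}_k,id_G)\,d\bar{\mu}$, and the $\bar{\mu}\times\lambda$-measure of ladder blocks broken from one stage to the next. Summability of each of these along $\{\varepsilon_k\}$ is what ensures that $\hat{S},\hat{P},\hat{\alpha}$ exist as limits with $\hat{S}$ differing from $\bar{S}_0$ on a set of measure less than $\varepsilon$. Since the refinement steps inflate the partition, a careful calibration of $\delta_k\to 0$ and $N_k\to\infty$ is needed to keep the errors summable; this calibration is where the argument is most delicate. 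Once arranged, telescoping as in Lemma~\ref{factor c} yields the distribution equality of $(S,P\vee c)$ and $(\hat{S},\hat{P}\vee\hat{\alpha}c)$ for every $n$, the refinement steps give $\hat{P}\succeq\bar{Q}_{N_k}$ modulo a set of arbitrarily small measure for every $k$ so that $\hat{P}$ generates $\bar{\mathcal{A}}$, and the ergodicity of $\hat{S}$ follows from Lemma~\ref{ergodicity} via the rectangles $A^{(k)}$ exactly as in Lemma~\ref{factor c}. The factor map afforded by the distribution equality is then a $G$-isomorphism from $(\hat{S},\hat{T},\hat{\sigma},\bar{X})$ onto $(S,T,\sigma,X)$.
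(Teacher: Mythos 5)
Your overall architecture (iterate the factor machinery, interleave steps that force the limiting partition to generate, obtain ergodicity from the dense sequence of rectangles) matches the paper's, but the mechanism you propose for forcing generation is circular and would never fire. You wait until ``each atom of $\bar{Q}_{N_k}$ is, within error $\delta _{k},$ a union of cylinders of the $\left( S,P\vee c\right) -$process of bounded rank.'' The atoms of $\bar{Q}_{N_k}$ live on $\bar{X}$; for them to be approximately unions of such cylinders means they are approximately measurable with respect to the pullback of the $\left( S,P\vee c\right) -$process under the (approximate) factor map, i.e. with respect to the algebra generated by $\hat{P}$ under $\hat{T}$ --- which is precisely the generation statement you are trying to establish. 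Nothing in the iteration of lemma \ref{ctble dil} pushes $\bar{Q}$ toward this algebra: the distribution-improvement steps control only the joint distributions of $\bar{P}_{k}\vee \bar{\beta}_{k}c,$ and $\bar{Q}$ could remain essentially independent of that process at every stage. The correct move runs in the opposite direction: after completing a run of the factor theorem one has an honest factor map $\phi $ with $\bar{P}_{1}=\phi ^{-1}\left( P\right) ;$ one then \emph{copies} $\bar{Q}_{N_k}$ \emph{down to} $X$ (lemma \ref{copy partition}), producing a partition $Q_{k}$ of $X$ with the correct joint distribution with $P\vee c,$ enlarges the target to $P\vee Q_{k},$ and reruns the factor theorem. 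Because $P$ generates for $T,$ $Q_{k}$ is coded by the $P-$process; the new factor map therefore codes the new $\bar{X}-$side copy of $Q_{k}$ by the $\hat{P}-$process, and the closeness conclusion $\left\vert \bar{P}-\hat{P}\right\vert <\varepsilon $ of the factor theorem, applied to the enlarged partition, makes that copy close to $\bar{Q}_{N_k}.$ This is exactly what theorem \ref{isomorphism b} does, together with the $\eta _{k}-$calibration needed so that the coding windows $\left[ -m_{k},m_{k}\right] $ survive all subsequent perturbations.

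Two further points. First, even after this correction the conclusion is not that $\hat{P}$ itself refines $\bar{Q}_{N_k}$ up to $\varepsilon _{k}$ --- a single partition within $\varepsilon $ of $\bar{P}$ cannot be expected to refine finer and finer coarsenings of a generator --- but rather that each $\bar{A}_{k}$ is contained, up to measure $2\varepsilon _{k},$ in $\bigvee_{i\in \left[ -m_{k},m_{k}\right] }\hat{T}^{-i}\left( \hat{P}\right) ;$ it is the process, not the partition, that generates. Second, theorem \ref{isomorphism a} carries no distribution hypothesis, so before any iteration can begin you must manufacture an initial $\bar{P}$ and $\bar{\alpha}$ satisfying hypothesis (\ref{dist hypoth isom b}); the paper does this by painting a generic $\left( S,P\vee c\right) -$name onto a Rokhlin tower for $\bar{S}_{0},$ and your proposal omits this initialization entirely.
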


We will first prove a version of this theorem analogous to theorem \ref%
{factor b}, and then use it to obtain theorem \ref{isomorphism a}.

\begin{theorem}
\label{isomorphism b}For all $\varepsilon >0$ there exists $\delta >0$ and $%
n\in \mathbb{N}$ such that if $\left( S,T,\sigma ,X\right) $ and $\left( 
\bar{S}_{0},\bar{T}_{0},\bar{\sigma}_{0},\bar{X}\right) $ are ergodic $G-$%
extensions on $X\times G$ and $\bar{X}\times G,$ respectively and if $P$ is
a generator for $\left( T,X\right) $ and $\bar{P}$ is a partition of $\bar{X}%
,$ such that 
\begin{equation}
\left\Vert dist_{X\times G}\bigvee_{i\in \left[ n\right] }S^{-i}\left( P\vee
c\right) ,dist_{\bar{X}\times G}\bigvee_{i\in \left[ n\right] }\bar{S}%
_{0}^{-i}\left( \bar{P}\vee c\right) \right\Vert _{\mathcal{M}}<\delta ,
\label{dist hypoth isom b}
\end{equation}%
then there exists an ergodic $G-$speedup $\left( \hat{S},\hat{T},\hat{\sigma}%
,\bar{X}\right) $ of $\left( \bar{S}_{0},\bar{T}_{0},\bar{\sigma}_{0},\bar{X}%
\right) $ and a generator $\hat{P}$ for $\left( \hat{T},\bar{X}\right) $ and
a measurable function $\hat{\alpha}:\bar{X}\rightarrow G$ such that $%
\left\vert \bar{P}-\hat{P}\right\vert <\varepsilon ,$%
\begin{equation*}
\int_{\bar{X}}\rho \left( \hat{\alpha}\left( \bar{x}\right) ,id_{G}\right) d%
\bar{\mu}<\varepsilon ,
\end{equation*}%
\begin{equation*}
\bar{\mu}\times \lambda \left\{ \left( \bar{x},g\right) \mid \hat{S}\left( 
\bar{x},g\right) \neq \bar{S}_{0}\left( \bar{x},g\right) \right\}
<\varepsilon ,
\end{equation*}%
and for all $n\in \mathbb{N},$%
\begin{equation*}
\left\Vert dist_{X\times G}\bigvee_{i\in \left[ n\right] }S^{-i}\left( P\vee
c\right) ,dist_{\bar{X}\times G}\bigvee_{i\in \left[ n\right] }\hat{S}%
^{-i}\left( \hat{P}\vee \hat{\alpha}c\right) \right\Vert _{\mathcal{M}}=0.
\end{equation*}%
In particular, $\left( \hat{S},\hat{T},\hat{\sigma},\bar{X}\right) $ and $%
\left( S,T,\sigma ,X\right) $ are $G-$isomorphic.
\end{theorem}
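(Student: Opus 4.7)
The plan is to parallel the derivation of Theorem \ref{factor b} from Lemma \ref{factor c}. I would first prove an isomorphism analogue of Lemma \ref{factor c}: the hypotheses are unchanged, and the conclusions include all of (\ref{close P fac})--(\ref{equal dist fac}) together with the additional statement that $\hat{P}$ is a generator for $\hat{T}$. Theorem \ref{isomorphism b} then follows by the same reduction used to obtain Theorem \ref{factor b} from Lemma \ref{factor c}: apply Lemma \ref{model name 1} to construct an initial $(n,\delta)$-regular partial $G$-speedup $\bar{S}$ of $\bar{S}_0$ that differs from $\bar{S}_0$ on a set of measure less than $\varepsilon/2$, then invoke the strengthened lemma. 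The generator property of $\hat{P}$ together with the all-$n$ distribution equality with $(S,P\vee c)$ yields a $G$-isomorphism by a standard process-isomorphism argument: the two processes $(\hat{S},\hat{P}\vee\hat{\alpha}c)$ and $(S,P\vee c)$ have identical full distributions, and both partitions generate, so the induced map on symbol sequences pulls back to a $G$-isomorphism.

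To prove the strengthened lemma, I would run the iterative construction of Lemma \ref{factor c} with one extra ingredient per stage. Fix, as in Lemma \ref{factor c}, a dense sequence $\{A^{(k)}\}$ of rectangles in $\bar{X}\times G$ playing the ergodicity role of Lemma \ref{ctble dil}, with $\sum\varepsilon_k<\varepsilon/2$. Fix in addition a sequence $\{\bar{B}_k\}\subset\bar{\mathcal{A}}$ dense in the measure algebra of $\bar{\mathcal{A}}$, each set appearing infinitely often. At stage $k$, after the application of Lemma \ref{ctble dil} produces $\bar{S}_k,\bar{P}_k,\bar{\alpha}_k$, refine $\bar{P}_k$ by splitting the base of the speedup tower for $\bar{S}_k$ according to the trace of $\bar{B}_k$ on it; since that base has measure at most $\delta_k<\varepsilon_k$, this refinement increases $|\bar{P}-\bar{P}_k|$ by at most $\varepsilon_k$, and summability keeps the total under $\varepsilon$. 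The refined partition is then passed as input to the next application of Lemma \ref{ctble dil}.

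The main obstacle is verifying that these refinements actually force $\hat{P}$ to be a generator. In the regular speedup framework, the column decomposition of the speedup tower is recoverable from the $\bar{P}_k$-names along $\bar{S}_k$-orbits (since $\bar{P}_k$ is constant on tower levels and distinct columns have distinct level-sequences); thus the location of a point within the tower, and in particular the identity of the base $B$ at any height, is visible in the name. Combined with the refinement splitting $B$ according to $\bar{B}_k$, this allows recovery of $\bar{B}_k$ on the entire tower, i.e.\ on a $(1-\varepsilon_k)$-fraction of $\bar{X}$. Because each $\bar{B}_k$ appears infinitely often in the dense sequence and $\varepsilon_k\to 0$, in the limit $\bar{B}_k$ is recoverable almost everywhere, so $\bar{B}_k\in\bigvee_{i\in\mathbb{Z}}\hat{T}^{-i}\hat{P}$; density of $\{\bar{B}_k\}$ then gives the generator conclusion. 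The delicate technical work, which I expect to be the bulk of the proof, is to show that the tower refinement remains compatible with the $(n_k,\delta_k)$-regularity framework of Lemma \ref{ctble dil} at the following stage, that the break-set and skewing error estimates remain summable across both types of modifications, and that the recovery of $\bar{B}_k$ from the name survives the distribution-matching modifications performed by Lemma \ref{ctble dil} at subsequent stages. Once these are verified, ergodicity and the all-$n$ distribution equality follow exactly as in Lemma \ref{factor c}.
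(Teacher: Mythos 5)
There is a genuine gap, and it lies exactly where you flag ``the delicate technical work'': the mechanism you propose for forcing $\hat{P}$ to generate cannot work as stated. The final conclusion requires that for every $n$ the distribution of $\bigvee_{i\in[n]}\hat{S}^{-i}(\hat{P}\vee\hat{\alpha}c)$ equal that of $\bigvee_{i\in[n]}S^{-i}(P\vee c)$ exactly; in particular $\hat{P}$ must carry the same alphabet and the same process law as $P$. Refining $\bar{P}_k$ by splitting the tower base according to $\bar{B}_k$ introduces new atoms that either persist into $\hat{P}$ (destroying the distribution match, and with it the $G$-isomorphism) or shrink to measure zero (carrying no information and hence not helping $\hat{P}$ generate). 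Two further problems: splitting the base by $\bar{B}_k\cap B$ only records $\bar{B}_k$ on the base, not on the other levels, so even perfect recovery of tower position would not recover $\bar{B}_k$ on a $(1-\varepsilon_k)$-fraction of $\bar{X}$; and the refinement violates condition (3) of regularity (constancy of the $\bar{P}\vee\bar{c}$-name on the base), which is a hypothesis of Lemma \ref{dil}, so the refined partition cannot legitimately be fed into the next application of Lemma \ref{ctble dil}.

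The paper's route is structurally different and avoids all of this. It does not reopen the iteration inside Lemma \ref{factor c}; it iterates the already-proved Theorem \ref{factor b}. At stage $k$ one has a $G$-factor map from the current speedup onto $S$; Lemma \ref{copy partition} is used to \emph{copy} the set $\bar{A}_k\subset\bar{X}$ over to a set $A_k\subset X$ whose joint distribution with $(S,P\vee c)$ matches that of $\bar{A}_k$ with the current process. One then reapplies Theorem \ref{factor b} with the enlarged target partition $P\vee\boldsymbol{1}_{A_k}$. This is where the hypothesis that $P$ generates $T$ is essential and where your argument never uses it: since $A_k\in\bigvee_i T^{-i}P$, the preimage $\bar{A}_k'$ of $A_k$ under the new factor map lies in $\bigvee_i\bar{T}_{k+1}^{-i}\bar{P}_{k+1}$, and $\bar{A}_k'$ is $\varepsilon_k$-close to $\bar{A}_k$. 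Choosing the later perturbation sizes small enough that the finite-window approximate containments $\bar{A}_k\subset_{2\varepsilon_k}\bigvee_{i\in[-m_k,m_k]}\hat{T}^{-i}\hat{P}$ survive to the limit yields the generator property without ever changing the alphabet of $\hat{P}$. Your reduction of the isomorphism statement to ``equal distributions plus both partitions generate'' is fine; it is the construction of a generating $\hat{P}$ with the correct process law that requires the copying argument rather than a refinement of the partition.
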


We will use the following:

\begin{lemma}
\label{copy partition}Suppose that $\left( S,T,\sigma ,X\right) $ and $%
\left( \bar{S}_{0},\bar{T}_{0},\bar{\sigma}_{0},\bar{X}\right) $ are ergodic 
$G-$ extensions on $X\times G$ and $\bar{X}\times G,$ respectively, where $%
\left( S,T,\sigma ,X\right) $ is a $G-$factor of $\left( \bar{S}_{0},\bar{T}%
_{0},\bar{\sigma}_{0},\bar{X}\right) $ via a factor map $\Phi $ of the form $%
\Phi \left( \bar{x},g\right) =\left( \phi \left( \bar{x}\right) ,g\right) .$
Suppose that $P$ is a partition of $X$ and $\bar{P}=\phi ^{-1}\left(
P\right) .$ Let $\bar{Q}$ be a partition of $\bar{X}.$ Then for all $\zeta
>0 $ and $n\in \mathbb{N}$ there is a partition $Q$ of $X$ such that%
\begin{equation}
\left\Vert dist_{X\times G}\bigvee_{i\in \left[ n\right] }S^{-i}\left( P\vee
Q\vee c\right) ,dist_{\bar{X}\times G}\bigvee_{i\in \left[ n\right] }\left( 
\bar{S}_{0}\right) ^{-i}\left( \bar{P}_{1}\vee \bar{Q}\vee c\right)
\right\Vert _{\mathcal{M}}<\zeta .  \label{dist match on tower}
\end{equation}

\begin{proof}
Choose $n_{1}>n$ and construct a Rokhlin tower $\tau $ of height $n_{1}$ for 
$S,$ measurable with respect to $X.$ Let $\bar{\tau}=\Phi ^{-1}\left( \tau
\right) .$ Choose $\zeta _{1}<\zeta $ and divide each $\left( T,P\right) $
column in $\tau $ into finitely many subcolumns on which the values of $%
\sigma $ form a set of diameter less than $\zeta _{1}.$ If $C$ is such a
column, then $\Phi ^{-1}\left( C\right) $ is a column of $\bar{\tau}$ with
the same property. We divide $\Phi ^{-1}\left( C\right) $ further into
subcolumns on each of whose levels $\bar{Q}$ is constant. Then we divide $C$
into a set of subcolumns with the same conditional distribution, and we
define $Q$ to give each the $Q-n_{1}-$name that matches the $\bar{Q}-n_{1}-$%
name of the subcolumn of $\Phi ^{-1}\left( C\right) $ that it is associated
with. If $n_{1}$ is chosen big enough, and $\zeta _{1}$ small enough, then
condition \ref{dist match on tower} is obtained.
\end{proof}

\begin{proof}
(of theorem \ref{isomorphism b}) Fix $\varepsilon >0$ and choose $\delta $
and $n$ as in theorem \ref{factor b} with respect to $\frac{\varepsilon }{2}%
. $ Suppose that $\left( S,T,\sigma ,X\right) $ and $\left( \bar{S}_{0},\bar{%
T}_{0},\bar{\sigma}_{0},\bar{X}\right) $ are ergodic $G-$extensions on $%
X\times G$ and $\bar{X}\times G,$ respectively and if $P$ is a finite
generator for $\left( T,X\right) $ and $\bar{P}$ is a partition of $\bar{X},$
such that 
\begin{equation}
\left\Vert dist_{X\times G}\bigvee_{i\in \left[ n\right] }S^{-i}\left( P\vee
c\right) ,dist_{\bar{X}\times G}\bigvee_{i\in \left[ n\right] }\bar{S}%
_{0}^{-i}\left( \bar{P}\vee c\right) \right\Vert _{\mathcal{M}}<\delta ,
\end{equation}%
By theorem \ref{factor b} there is an ergodic $G-$speedup $\bar{S}_{1}$ of $%
\bar{S}_{0}$ and a partition $\bar{P}_{1}$ of $\bar{X}$ and a measurable
function $\bar{\alpha}_{1}:\bar{X}\rightarrow G$ such that $\left\vert \bar{P%
}-\bar{P}_{1}\right\vert <\frac{\varepsilon }{2},$%
\begin{equation*}
\int_{\bar{X}}\rho \left( \bar{\alpha}_{1}\left( \bar{x}\right)
,id_{G}\right) d\bar{\mu}<\frac{\varepsilon }{2},
\end{equation*}%
\begin{equation*}
\bar{\mu}\times \lambda \left\{ \left( \bar{x},g\right) \mid \bar{S}%
_{1}\left( \bar{x},g\right) \neq \bar{S}_{0}\left( \bar{x},g\right) \right\}
<\frac{\varepsilon }{2},
\end{equation*}%
and for all $n\in \mathbb{N},$%
\begin{equation*}
\left\Vert dist_{X\times G}\bigvee_{i\in \left[ n\right] }S^{-i}\left( P\vee
c\right) ,dist_{\bar{X}\times G}\bigvee_{i\in \left[ n\right] }\left( \bar{S}%
_{1}^{\bar{\alpha}_{1}}\right) ^{-i}\left( \bar{P}_{1}\vee c\right)
\right\Vert _{\mathcal{M}}=0.
\end{equation*}%
The last condition says that the $G-$extension $\bar{S}_{1}^{\bar{\alpha}%
_{1}}$ has $S$ as a $G-$factor, via a factor map which is the identity on
the $G-$coordinate, and which has $\bar{P}_{1}$ as the preimage of $P.$
\bigskip \newline
Fix a sequence $\left\{ \varepsilon _{i}\right\} _{i=1}^{\infty }$ so that $%
\sum_{i=1}^{\infty }\varepsilon _{i}<\frac{\varepsilon }{2}$ and a sequence $%
\left\{ \bar{A}_{i}\right\} _{i=1}^{\infty }$ of sets in $\bar{X}$ that are
dense in the measure algebra of $\bar{X}$ and in which each of these sets
appears infinitely often. Choose $\delta _{1}$ and $n_{1}$ by theorem \ref%
{factor b} with respect to $\varepsilon _{1}$ and let $A_{1}\subset X$ be
chosen (using lemma \ref{copy partition}) so that, 
\begin{equation*}
\left\Vert dist_{X\times G}\bigvee_{i\in \left[ n_{1}\right] }S^{-i}\left(
P\vee \boldsymbol{1}_{A_{1}}\vee c\right) ,dist_{\bar{X}\times
G}\bigvee_{i\in \left[ n_{1}\right] }\left( \bar{S}_{1}^{\bar{\alpha}%
_{1}}\right) ^{-i}\left( \bar{P}_{1}\vee \boldsymbol{1}_{\bar{A}_{1}}\vee
c\right) \right\Vert _{\mathcal{M}}<\delta _{1}.
\end{equation*}%
Applying theorem \ref{factor b} again we get an ergodic $G-$speedup $\bar{S}%
_{2}$ of $\bar{S}_{1}$ (and hence of $\bar{S}_{0}$) and a partition $\bar{P}%
_{2}\vee \boldsymbol{1}_{\bar{A}_{1}^{\prime }}$ of $\bar{X}$ and a function 
$\bar{\alpha}_{2}:\bar{X}\rightarrow G$ such that $\left\vert \bar{P}%
_{1}\vee \boldsymbol{1}_{\bar{A}_{1}}-\bar{P}_{2}\vee \boldsymbol{1}_{\bar{A}%
_{1}^{\prime }}\right\vert <\varepsilon _{1},$%
\begin{equation*}
\int_{\bar{X}}\rho \left( \bar{\alpha}_{2}\left( \bar{x}\right)
,id_{G}\right) d\bar{\mu}<\varepsilon _{1},
\end{equation*}%
\begin{equation*}
\bar{\mu}\times \lambda \left\{ \left( \bar{x},g\right) \mid \bar{S}%
_{2}\left( \bar{x},g\right) \neq \bar{S}_{1}\left( \bar{x},g\right) \right\}
<\varepsilon _{1},
\end{equation*}%
and for all $n\in \mathbb{N},$%
\begin{equation*}
\left\Vert dist_{X\times G}\bigvee_{i\in \left[ n\right] }S^{-i}\left( P\vee 
\boldsymbol{1}_{A_{1}}\vee c\right) ,dist_{\bar{X}\times G}\bigvee_{i\in %
\left[ n\right] }\bar{S}_{2}^{\bar{\beta}_{2}}\left( \bar{P}_{2}\vee 
\boldsymbol{1}_{\bar{A}_{1}^{\prime }}\vee c\right) \right\Vert _{\mathcal{M}%
}=0,
\end{equation*}%
where $\bar{\beta}_{2}=\bar{\alpha}_{2}\bar{\alpha}_{1}.$ In other words,
the $G-$extension $\bar{S}_{2}^{\bar{\beta}_{2}}$ has $S$ as a $G-$factor,
via a factor map which is the identity on the $G-$coordinate, and which has $%
\bar{P}_{2}\vee \boldsymbol{1}_{\bar{A}_{1}^{\prime }}$ as the preimage of $%
P\vee \mathcal{A}_{1}.$ Since $P$ is a generator for $T$ we have $\bar{A}%
_{1}^{\prime }\subset \bigvee_{i=-\infty }^{\infty }\bar{T}_{2}^{-i}\left( 
\bar{P}_{2}\right) .$ Since $\left\vert \boldsymbol{1}_{\bar{A}_{1}}-%
\boldsymbol{1}_{\bar{A}_{1}^{\prime }}\right\vert <\varepsilon _{1}$ we know
that for some $m_{1}$ we have%
\begin{equation*}
\bar{A}_{1}\underset{\varepsilon _{1}}{\subset }\bigvee_{i\in \left[
-m_{1},m_{1}\right] }\bar{T}_{2}^{-i}\left( \bar{P}_{2}\right) .
\end{equation*}%
We choose $\eta _{2}>0$ so that for every transformation $\hat{T}$ of $\bar{X%
}$ and partition $\hat{P}$ of $\bar{X}$ such that 
\begin{equation}
\left\vert \hat{P}-\bar{P}_{1}^{\prime }\right\vert <\eta _{2}
\label{close partition isom}
\end{equation}%
and 
\begin{equation}
\bar{\mu}\left\{ \bar{x}\in \bar{X}\mid \hat{T}\left( \bar{x}\right) \neq 
\bar{T}_{1}\left( \bar{x}\right) \right\} <\eta _{2}
\label{small change isom}
\end{equation}%
we get 
\begin{equation*}
\bar{A}_{1}\underset{2\varepsilon _{1}}{\subset }\bigvee_{i\in \left[
-m_{1},m_{1}\right] }\hat{T}^{-i}\left( \hat{P}\right) .
\end{equation*}%
We will continue making successive speedups and partitions, making sure that
the limiting process $\left( \hat{T},\hat{P}\right) \,\ $satisfies
conditions \ref{close partition isom} and \ref{small change isom}$.$ To
proceed, we replace the numbers $\left\{ \varepsilon _{i}\right\}
_{i=2}^{\infty }$ by smaller numbers (also called $\varepsilon _{i}$) so
that $\sum_{i=2}^{\infty }\varepsilon _{i}<\eta _{2}$. We then repeat the
above argument, applying it to the partition $\bar{A}_{2}=\left\{ \bar{A}%
_{2},\bar{X}\backslash \bar{A}_{2}\right\} $ and $\varepsilon _{2}$ and the
process $\bar{S}_{2}^{\bar{\alpha}_{2}}\left( \bar{P}_{2}\vee c\right) .$
\bigskip \newline
Continuing in this way we obtain a sequence of speedups $\bar{S}_{k}$ and
partitions $\bar{P}_{k}$ and functions $\bar{\alpha}_{k}:\bar{X}\rightarrow
G $ and integers $m_{k}$ such that, for each $k$ (and writing $\bar{\beta}%
_{k}=\tprod\limits_{j=0}^{k-1}\bar{\alpha}_{k-j}$), $\left\vert \bar{P}%
_{k+1}-\bar{P}_{k}\right\vert <\varepsilon _{k},$%
\begin{equation*}
\int_{\bar{X}}\rho \left( \bar{\alpha}_{k+1}\left( \bar{x}\right)
,id_{G}\right) d\bar{\mu}<\varepsilon _{k},
\end{equation*}%
\begin{equation*}
\bar{\mu}\times \lambda \left\{ \left( \bar{x},g\right) \in \bar{X}\times
G\mid \bar{S}_{k+1}\left( \bar{x},g\right) \neq \bar{S}_{k}\left( \bar{x}%
,g\right) \right\} <\varepsilon _{k}
\end{equation*}%
for all $n$%
\begin{equation*}
\left\Vert dist_{X\times G}\bigvee_{i\in \left[ n\right] }S^{-i}\left( P\vee
c\right) ,dist_{\bar{X}\times G}\bigvee_{i\in \left[ n\right] }\bar{S}%
_{k}\left( \bar{P}_{k}\vee \bar{\beta}_{k}c\right) \right\Vert _{\mathcal{M}%
}=0,
\end{equation*}%
and%
\begin{equation*}
\bar{A}_{k}\underset{\varepsilon _{k}}{\subset }\bigvee_{i\in \left[
-m_{k},m_{k}\right] }\bar{T}_{k+1}^{-i}\left( \bar{P}_{k+1}\right) .
\end{equation*}%
Moreover, the $\varepsilon _{k}$ are chosen (by reducing all the $\left\{
\varepsilon _{i}\right\} _{i=k}^{\infty }$ at stage $k$) to guarantee that
the partitions $\bar{P}_{k}$ converge to a partition $\hat{P},$ the $\bar{S}%
_{k}$ converge to $\hat{S},$ the functions $\bar{\beta}_{k}$ converge to $%
\hat{\alpha}$ and so that $\left\vert \hat{P}-\bar{P}\right\vert
<\varepsilon ,$%
\begin{equation*}
\int_{\bar{X}}\rho \left( \hat{\alpha}\left( \bar{x}\right) ,id_{G}\right) d%
\bar{\mu}<\varepsilon ,
\end{equation*}%
\begin{equation*}
\bar{\mu}\times \lambda \left\{ \left( \bar{x},g\right) \in \bar{X}\times
G\mid \hat{S}\left( \bar{x},g\right) \neq \bar{S}_{0}\left( \bar{x},g\right)
\right\} <\varepsilon
\end{equation*}%
and for each $k$%
\begin{equation*}
\bar{A}_{k}\underset{2\varepsilon _{k}}{\subset }\bigvee_{i\in \left[
-m_{k},m_{k}\right] }\hat{T}^{-i}\left( \hat{P}\right) .
\end{equation*}%
It follows that for all $n$%
\begin{equation*}
\left\Vert dist_{X\times G}\bigvee_{i\in \left[ n\right] }S^{-i}\left( P\vee
c\right) ,dist_{\bar{X}\times G}\bigvee_{i\in \left[ n\right] }\hat{S}%
^{-i}\left( \hat{P}\vee \hat{\beta}c\right) \right\Vert _{\mathcal{M}}=0
\end{equation*}%
and that $\hat{P}$ is a generator for $\hat{T}.$ From this we conclude that
the $G-$extension $\left( \hat{S},\hat{T},\hat{\sigma},\bar{X}\right) $ is $%
G-$isomorphic to $\left( S,T,\sigma ,X\right) .$
\end{proof}
\end{lemma}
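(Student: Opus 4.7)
The plan is to construct $Q$ by transplanting the $\bar{Q}$-structure from $\bar{X}$ to $X$ along a sufficiently tall Rokhlin tower, exploiting the fact that $\Phi(\bar{x},g)=(\phi(\bar{x}),g)$ being a $G$-extension factor map forces $\bar{\sigma}_{0}=\sigma\circ\phi$. Consequently $\Phi$ preserves both $c$-names and $\bar{P}=\phi^{-1}(P)$-names, and it is measure-preserving; in particular $(\mu\times\lambda)(\tau\times G)=(\bar{\mu}\times\lambda)(\Phi^{-1}(\tau\times G))$ for every measurable $\tau\subset X$.

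First I fix $n_{1}\gg n/\zeta$ and, using lemma~\ref{strR} applied to the base factor $T$, I build a Rokhlin tower $\tau$ of height $n_{1}$ measurable with respect to $X$, with $\mu(\tau)>1-\zeta/10$. I refine the columns of $\tau$ by the $(T,P)$-$n_{1}$-names so that $P$ is constant on every level of each resulting column, and then refine further so that on each such column $C$ the $G^{n_{1}}$-sequence $(\sigma,\sigma\circ T,\ldots,\sigma\circ T^{n_{1}-1})$ lies in a set of sup-diameter at most $\zeta_{1}$, where $\zeta_{1}$ is chosen much smaller than $\zeta/n$. The preimage $\bar{\tau}=\phi^{-1}(\tau)$ is then a Rokhlin tower of the same height for $\bar{T}_{0}$ measurable with respect to $\bar{X}$, and each column $C\subset\tau$ pulls back to a column $\phi^{-1}(C)\subset\bar{\tau}$ inheriting the identical $\bar{P}$-$n_{1}$-name and the same $\zeta_{1}$-small $\bar{\sigma}_{0}$-diameter, since $\bar{\sigma}_{0}=\sigma\circ\phi$.

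Next, for each refined column $C$, I partition $\phi^{-1}(C)$ still further by $\bar{Q}$-$n_{1}$-name, obtaining sub-columns $\{B_{j}\}$ with conditional weights $\{p_{j}\}$ inside $\phi^{-1}(C)$. I then split $C$ into sub-columns $\{C_{j}\}$ with the identical conditional weights $\{p_{j}\}$, and on each $C_{j}$ I define $Q$ to be constant on every level, with the symbol on the $i$-th level of $C_{j}$ taken to equal the $\bar{Q}$-value of the $i$-th level of $B_{j}$. On $X\setminus\tau$ I let $Q$ be a single atom.

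Finally, to verify the distribution match, I examine a point $(x,g)$ lying in the bottom $n_{1}-n$ levels of a sub-column $C_{j}\times G$: its $(P\vee Q\vee c)$-$n$-name is determined by the constant $P$- and $Q$-names of $C_{j}$ together with the $c$-trajectory $(g,\sigma(x)g,\sigma(Tx)\sigma(x)g,\ldots)$, and a parallel statement holds on $B_{j}\times G$ with $\bar{P}$, $\bar{Q}$, $\bar{\sigma}_{0}$ in place of $P$, $Q$, $\sigma$. Because $C_{j}$ and $B_{j}$ carry matching labels and weights, their $\sigma$- and $\bar{\sigma}_{0}$-sequences lie within $\zeta_{1}$ of a single sequence in $G^{n_{1}}$, and $\bar{\sigma}_{0}=\sigma\circ\phi$, so the two conditional $n$-distributions on $C_{j}\times G$ and $B_{j}\times G$ differ by $O(n\zeta_{1})$ in the Kantorovich metric. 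The contributions from the top $n$ levels of each column (total mass at most $n/n_{1}$) and from the complements of $\tau$ and $\bar{\tau}$ (mass at most $\zeta/10$) are bounded separately. Choosing $n_{1}$ large and $\zeta_{1}$ small then yields the required bound $\zeta$. The only real obstacle is managing the continuous coordinate $c$: the $\sigma$-diameter refinement is essential, because without it the $c$-$n$-name distributions on $C_{j}\times G$ and $B_{j}\times G$ would only be loosely correlated and the conditional distribution match would fail; everything else is a straightforward bookkeeping exercise.
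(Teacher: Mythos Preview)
Your proof is correct and follows essentially the same approach as the paper's: build a tall Rokhlin tower in $X$, refine columns by $P$-names and by small $\sigma$-diameter, pull back to $\bar{X}$, subdivide each preimage column by $\bar{Q}$-$n_{1}$-names, and transplant that subdivision back to $X$ to define $Q$. Your explicit observation that $\bar{\sigma}_{0}=\sigma\circ\phi$ (forced by the form $\Phi(\bar{x},g)=(\phi(\bar{x}),g)$) is exactly what makes the $\sigma$-diameter refinement pass to $\phi^{-1}(C)$ and what drives the $c$-name match; the paper leaves this implicit.
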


Finally, we use theorem \ref{isomorphism b}\ to prove theorem \ref%
{isomorphism a}.

\begin{proof}
(of theorem \ref{isomorphism a}) Let $\left( S,T,\sigma ,X\right) $ and $%
\left( \bar{S}_{0},\bar{T}_{0},\bar{\sigma}_{0},\bar{X}\right) $ be ergodic $%
G-$extensions on $X\times G$ and $\bar{X}\times G,$ respectively. Fix $%
\varepsilon >0.$ Choose $\delta $ and $n$ with respect to $\varepsilon $ as
theorem \ref{isomorphism b}. Let $P$ be a finite generator of $T.$ Fix $%
\zeta >0$ and $N\in \mathbb{N}$ and let $\left( x,g\right) \in \left(
X\times G\right) $ satisfy%
\begin{equation*}
\left\Vert dist_{S^{\left[ N\right] }\left( x,g\right) }\bigvee_{i\in \left[
n\right] }S^{-i}\left( P\vee c\right) ,dist_{X\times G}\bigvee_{i\in \left[ n%
\right] }S^{-i}\left( P\vee c\right) \ \right\Vert _{\mathcal{M}}<\zeta
\end{equation*}%
Let $\bar{\tau}$ be a Rokhlin tower of height $N$ for $\bar{S},$ measurable
with respect to $\bar{X},$ and define $\bar{\alpha}:\bar{X}\rightarrow G$
and $\bar{P}$ so that for each $\bar{x}$ in the base of $\bar{\tau},$ and
for all $i\in \left[ 0,N-1\right] ,$%
\begin{equation*}
\left( \bar{P}\vee \bar{\alpha}c\right) \left( \bar{S}^{i}\left( \bar{x}%
,id_{G}\right) \right) =\left( P\vee c\right) \left( S^{i}\left( x,g\right)
\right) .
\end{equation*}%
If $\zeta $ is\ chosen sufficiently small, and $N$ is sufficiently large,
then we obtain condition \ref{dist hypoth isom b} in the hypotheses of
theorem \ref{isomorphism b}. (Note that $dist_{X\times G}\bigvee_{i\in \left[
n\right] }S^{-i}\left( P\vee c\right) $ is invariant under right
multiplication in the group component, so the use of the single orbit to
define $\bar{P}$ and $\bar{\alpha}$ gives the right distribution of $n-$%
names on $\bar{X}\times G).$ The conclusion of theorem \ref{isomorphism a}
follows from the application of theorem \ref{isomorphism b}.
\end{proof}


\begin{thebibliography}{AOW}
\bibitem[AOW]{AOW} P. Arnoux, D. S. Ornstein, B. Weiss, Cutting and
stacking, interval exchanges and geometric models, \textit{Isr. J. Math}, 
\textbf{50}, , nos. 1-2, (1985), 160-168.

\bibitem[D1]{D1} R. M. Dudley, Distances of probability measures and random
variables, \textit{Ann. Math. Statistics}, 39, (1968), 1563-1572.

\bibitem[D2]{D2} R. M \ Dudley, \textit{Real Analysis and Probability},
Wadsworth \&Brooks/Cole, Pacific Grove, CA, 1989.

\bibitem[F]{F} A. Fieldsteel, Factor orbit equivalence of compact group
extensions, \textit{Isr. J. Math.}, \textbf{38}, no. 4, (1981), 289-303.

\bibitem[G]{G} M. Gerber, Factor orbit equivalence of compact group
extensions and classification of finite extensions of ergodic automorphisms, 
\textit{Isr. J. Math.}, \textbf{57}, no. 1, (1987), 28-48

\bibitem[R]{R} D. J. Rudolph, Restricted orbit equivalence\textit{,} \textit{%
Mem. AMS}, \textbf{323} (1985).

\bibitem[KR1]{KR1} J. Kammeyer, D.J. Rudolph, Restricted orbit equivalence
for ergodic $\mathbb{Z}^{d}$\ actions, I.\textit{,} \textit{Ergodic Th. Dyn.
Sys.}, \textbf{17}, no. 5, 1997, 1083--1129.

\bibitem[KR2]{KR2} J. Kammeyer, D.J. Rudolph, \textit{Restricted orbit
equivalence for actions of discrete amenable groups}, Cambridge Tracts in
Mathematics, \textbf{146}. Cambridge University Press, 2002.

\bibitem[O]{O} D. S. Ornstein, \textit{Ergodic Theory, Randomness and
Dynamical Systems}, Yale University Press, New Haven, 1970.
\end{thebibliography}
\end{document}